\numberwithin{equation}{section}
\newtheorem{lemma}{Lemma}[section]
\newtheorem{theorem}{Theorem}[section]
\newtheorem{proposition}{Proposition}[section]
\newtheorem{corollary}{Corollary}[section]
\theoremstyle{definition}
\theoremstyle{remark}
\newtheorem{remark}{Remark}[section]
\newtheorem{example}{Example}[section]
\newcommand{\de}{\mathrm d}
\newcommand{\N}{\mathbb N}
\newcommand{\R}{\mathbb R}
\newcommand{\E}{\mathbb E}
\newcommand{\eps}{\varepsilon}
\renewcommand{\d}{\,\mathrm{d}}
\newcommand{\st}{\,:\,}
\newcommand{\norm}[1]{\left\|#1\right\|}
\newcommand{\diag}{\operatorname{diag}}
\newcommand{\off}[1]{}
\DeclareMathOperator*{\argmin}{arg\,min}
\newcommand{\ran}{\mathrm{ran}}
\renewcommand{\div}{\mathrm{div}}
\newcommand{\cov}{\mathsf{C}}
\newcommand{\mean}{\mathsf{m}}
\newcommand{\preim}{\xi}
\newcommand{\wiener}{\mathbf{W}}
\author{Leon Bungert\thanks{Hausdorff Center for Mathematics, University of Bonn, Endenicher Allee 62, Villa Maria,
53115 Bonn, Germany. Email: \href{mailto:leon.bungert@hcm.uni-bonn.de}{leon.bungert@hcm.uni-bonn.de}} \and Philipp Wacker\thanks{Friedrich-Alexander University Erlangen-Nürnberg, Department Mathematics, Cauerstr. 11, 91058 Erlangen, Germany. Email: \href{mailto:wacker@math.fau.de}{wacker@math.fau.de}}}
\title{Complete Deterministic Dynamics and Spectral Decomposition of the Linear Ensemble Kalman Inversion}
\let\blx@rerun@biber\relax
\begin{document}

\maketitle

\begin{abstract}
    The ensemble Kalman inversion (EKI) for the solution of Bayesian inverse problems of type {$y = A u +\eps$, with $u$ being an unknown parameter, $y$ a given datum, and $\eps$ measurement noise}, is a powerful tool usually derived from a sequential Monte Carlo point of view. 
    It describes the dynamics of an ensemble of particles $\{u^j(t)\}_{j=1}^J$, whose initial empirical measure is sampled from the prior, evolving over an artificial time $t$ towards an approximate solution of the inverse problem{, with $t=1$ emulating the posterior, and $t\to\infty$ corresponding to the under-regularized minimum-norm solution of the inverse problem}.
    Using spectral techniques, we provide a complete description of the deterministic dynamics of EKI and its asymptotic behavior in parameter space.
    In particular, we analyze the dynamics of {naive} EKI and mean-field EKI {with a special focus on their time asymptotic behavior.}
    Furthermore, we show that---even in the deterministic case---residuals in parameter space do not decrease monotonously in the Euclidean norm and suggest a problem-adapted norm, where monotonicity can be proved. 
    Finally, we derive a system of ordinary differential equations governing the spectrum and eigenvectors of the covariance matrix.
    {While the analysis is aimed at the EKI, we believe that it can be applied to understand more general particle-based dynamical systems. }
\end{abstract}

\tableofcontents

\section{Introduction}

In this article we study the ensemble Kalman approach for solving {a linear} inverse problem of the form
\begin{align}\label{eq:inverse_prob}
    y = A {u} + \eps ,\quad \eps \sim \mathcal{N}(0,\Gamma),
\end{align}
where {$u \in \R^n$} is the unknown parameter of interest, $y\in \R^m$ are noisy measurements, $A\in \R^{m\times n}$ is a forward operator mapping the parameter space into the observation space, and $\Gamma \in \R^{m\times m}$ is a covariance matrix of the noise model in the measurement process which gives $y$. Following the Bayesian approach to inverse problems \cite{kaipio2006statistical}, we specify a prior measure $\mu_0$ on the set of feasible parameters $u$. Bayes' theorem then shows a way of incorporating the data $y$ into the prior, yielding a posterior measure $\mu$ such that 
\begin{align*}
    \frac{\d\mu}{\d\mu_0}(u) \propto \exp\left(-\frac{1}{2}\|y-Au\|_\Gamma^2\right)
\end{align*}
Here (and in the following) we define the weighted inner product and norm as $\langle x,y\rangle_H:=\langle x,H^{-1}y\rangle$ and $\norm{x}_H^2 :=\langle x, H^{-1} x\rangle$ for a symmetric positive definite matrix~$H$.

We continue with setting up the notation, a historical discussion and an introduction into the main techniques of the paper. {An overview of the} novel contributions of this manuscript can be found on page \pageref{sec:contribution}

\paragraph{{The ensemble Kalman methodology applied to inverse problems}}

{The ensemble Kalman method was originally generalized from a method for linear Gaussian state estimation \cite{kalman1960new} and derived in the field of data assimilation, for the task of state estimation \cite{evensen1994sequential,evensen2009data}.
Quickly it was adopted as a method for inversion by \cite{chen2012ensemble,emerick2013ensemble} in the field of oil reservoir modelling, taking inspiration from sequential Monte Carlo \cite{del2006sequential}. In all these works, the prior is (at least approximately) transported to the posterior in one (artificial) time unit $t=1$, {we thus label such methods as ``ensemble sequential Monte Carlo (EnSMC)''}. It was soon framed as a derivative-free optimization method in \cite{reich2011dynamical}, which kept with the idea of transporting the prior to the posterior in one time unit $t=1$, but already hinted at the possibility of studying the behaviour for $t\to\infty$.

This {led} to the advent of the ensemble Kalman methodology for inverse problems, commonly abbreviated EKI, starting with \cite{iglesias2013ensemble}, followed by detailed continuum limit analysis in various senses (continuous time, mean-field limit etc.) in, e.g., \cite{schillings2017analysis,schillings2018convergence,blomker2019well,herty2019kinetic}, extended to include Tikhonov regularization in \cite{chada2020tikhonov}, and generalized to the ensemble Kalman Sampler (EKS) \cite{garbuno2020interacting} and the closely related ALDI method in \cite{nusken2019note,garbuno2020affine}. {Here, a divergence of the philosophy of the various members of the Ensemble Kalman family can be observed: EnSMC methods like EKS and ALDI approximate the posterior measure by sampling. In contrast, EKI, TEKI and {derivatives of these methods} try to solve an optimization problem by using the ensemble's pointwise evaluations for implicit gradient approximations. In particular, EKI and TEKI are expected to contract around an optimal point, with their ensemble covariance collapsing. On the other hand, EKS and ALDI will try to match the posterior's covariance structure.} 

{In a recent survey \citep{calvello2022ensemble}, various versions of the Ensemble Kalman method were unified and anlysed, in particular from the mean-field perspective.}
It should be noted that many techniques employed in the analysis of these various methods were pioneered and influenced by research done on the ensemble Kalman filter in data assimilation, see \cite{bergemann2012ensemble} for continuous-time analysis, and works like \cite{reich2011dynamical,majda2012filtering,law2015data,reich2015probabilistic,de2018long}. Due to different communities' preferred notations, the nomenclature around ensemble Kalman $\{$filtering, inversion$\}$ is not always clearly distinguished and has considerable overlap, with similar techniques bearing different names in different fields (and the other way around).  }


The basic idea {of the ensemble Kalman method} is to replace all measures involved with an empirical measure generated by an ensemble of particles: An initial ensemble of particles $\{u_0^j\}_{j=1}^J$ (usually sampled from the prior $\mu_0$) is considered a surrogate for the prior, and transitioning from prior to posterior amounts to moving the ensemble to new positions $u_1^j$, with the posterior ensemble $\{u_1^j\}_{j=1}^J$ standing in for the posterior. It can be shown that this particle update is given by the ensemble Kalman inversion (EKI)
\begin{align}\label{eq:enk_it_1step}
    u^j_{1} = u^j_0 -  C(u_0) A^T(A C(u_0) A^T + \Gamma)^{-1}(Au^j_0 - y),
\end{align}
where $C(u_0)$ is the sample covariance of the initial ensemble $\{u_0^j\}_{j=1}^J$.

Under linear and Gaussian assumptions, and for an initial ensemble with {sample} covariance matching exactly the prior covariance {(i.e., in particular $J\geq n$)}, it can be shown that the empirical measure given by the final particles $\{u_1^j\}_{j=1}^J$ is exactly the Gaussian measure identical to the posterior. Although only exact in this linear and Gaussian regime (see \cite{ernst2015analysis}), the computational benefit of replacing measures by empirical measures has prompted usage of this methodology in a more broader context, for example for nonlinear observation operators and non-Gaussian priors. {Also, in the underdetermined case of $J<n$ which is especially relevant in practical applications, the subspace property {(stated explicitly in \citep{iglesias2013ensemble} but well-known in the community before then) }of the Ensemble Kalman method leads to regularization by dimensionality reduction, which is one of the reasons of good performance of the EKI method.}

If prior and posterior are very different from each other, then the particle updates \labelcref{eq:enk_it_1step} exhibit a large jump. Exchanging this one-step algorithm by a many-step iteration is supposed to yield a much smoother transition between prior and posterior, alongside a promise of computational improvements stemming from ``easing into'' the posterior. 
By introducing intermediate time steps (and thus intermediate measures $\mu_k$ ``interpolating'' between the prior $\mu_0$ and the posterior $\mu$), one obtains the iteration
\begin{align}\label{eq:enk_it}
    u^j_{n+1} = u^j_n -  C(u_n) A^T(A C(u_n) A^T + \tau^{-1}\Gamma)^{-1}(Au^j_n -  {y_n^j}), \quad n=0,\ldots,N,
\end{align}
{where $y_n^j \sim \mathcal N(y, \tau^{-1}\Sigma)$ are i.i.d. perturbed observations, $C(u_n)$ is a notational shorthand for the sample covariance $C(\{u_n^j\}_j)$,} and $\tau=1/N$ with $N\in\N$ is a time step.  If $\Sigma = \Gamma$, this is the stochastic iterative EKI dynamics \cite{garbuno2020interacting}. 
If we set $\Sigma = 0$, we obtain the {naive} iterative EKI dynamics, {and if additionally $N=1$ \labelcref{eq:enk_it} coincides with the ``vanilla'' EKI \labelcref{eq:enk_it_1step}}.

The data $y$ is used in a perturbed form $y_n^j$ where the perturbation is done in such a way as to match the statistical properties of the observation noise. The idea is that perturbing the data additionally would help the dynamics explore the space state better. Also, we will see that without this perturbation, in contrast to~\labelcref{eq:enk_it_1step}, the continuous EKI dynamics does not recover the posterior.

After some algebra it can be seen that \labelcref{eq:enk_it} can be equivalently reformulated as solution of a variational regularization problem:
\begin{align}\label{eq:enk_it_optim}
     u^j_{n+1} = \argmin_{u} \frac{\tau}{2}\norm{Au -  {y_n^j}}_\Gamma^2 + \frac{1}{2}\norm{u - u_n^j}_{ C(u_n)}^2.
\end{align}
{Note that there is a link here to the methods of randomized likelihood estimation \citep{chen2012ensemble} and Randomize-then-optimize \citep{bardsley2014randomize}, which also construct surrogates to the posterior measure by solving an optimization problem.}
 Furthermore, by linearity of \labelcref{eq:enk_it} the sample means $m_n:=\frac{1}{J}\sum_{j=1}^Ju_n^j$ also satisfy
\begin{align}\label{eq:enk_it_means_optim}
     m_{n+1} = \argmin_{u} \frac{\tau}{2}\norm{A u - y_n}_\Gamma^2 + \frac{1}{2}\norm{u - m_n}_{ C(u_n)}^2,
\end{align}
{where $y_n := \frac{1}{J}\sum_{j=1}^J y_n^j$ is the sample mean of the perturbed observations at iteration~$n$.}
Hence the evolution of the whole ensemble and the sample means can be seen as {stochastic} minimizing movement discretization of the gradient flow of the functional $u\mapsto \frac12\norm{Au-{y}}^2_\Gamma$ with respect to the varying norm $\norm{\cdot}_{ C(u_n)}^2$, cf. the discussion in \cite{armbruster2020stabilization,weissmann2020particle}, and see \cite{garbuno2020interacting} for a mean-field version of this structure.

\paragraph{Continuous EKI}

Following \cite{iglesias2013ensemble,schillings2017analysis}, sending $\tau\to 0$ one arrives at the continuous ensemble Kalman inversion method
\begin{equation} \label{eq:enkf_stoch}
     \dot{u}^j(t) = - C(t) A^T\Gamma^{-1}(Au^j(t)-y) +  C(t)A^T\Gamma^{-1}\sqrt{\Sigma}\dot{\wiener}^j(t)
\end{equation}
for $j\in\{1,\ldots, J\}$, where $t\mapsto \wiener^j(t)$ are i.i.d. Wiener processes and $\Sigma$ is a symmetric positive definite matrix, with interesting special cases being $\Sigma=0$ ({naive} EKI) and $\Sigma=\Gamma$ (stochastic EKI). {Both versions of \labelcref{eq:enkf_stoch} are particle dynamics aimed at optimization tasks. Indeed we will demonstrate that, although the one-step variant of EKI in \labelcref{eq:enk_it_1step} recovers the posterior, this property is lost via the discretization and continuum limit leading to \labelcref{eq:enkf_stoch}. If one is interested in sampling, the Ensemble Kalman Sampler \cite{garbuno2020interacting} is a better choice.}

Note that by construction $t=0$ corresponds to the prior and $t=1$ is supposed to approximate the posterior (although in which sense has to be specified). For time $t > 1$, the influence of the prior decreases further.
The quantities,
\begin{align}
    \label{eq:sample_cov}
     C(t) &:=\frac{1}{J}\sum_{j=1}^J(u^j(t)-m(t)) \otimes (u^j(t)-m(t)),\\
    \label{eq:sample_mean}
    m(t) &:= \frac{1}{J}\sum_{j=1}^J u^j(t),
\end{align}
denote the \emph{{sample} covariance} and \emph{{sample} mean} of the particles $u^j(t)$ at time $t>0$.

Apart from time-continuous limits of the ensemble Kalman inversion \labelcref{eq:enk_it}, another interesting limit is the \emph{mean-field} limit as the number of particles $J\in\N$ in the ensemble goes to infinity.
In this case, it is well known that the empirical measure $\rho_J(t) := \frac{1}{J}\sum_{j=1}^J \delta_{u^j(t)}$, where the particles $u^j(t)$ solve \labelcref{eq:enkf_stoch}, converge to a time-dependent measure $\rho(t,u)$ which solves a Fokker--Planck equation, see, e.g., \cite{law2016deterministic,ding2019mean,herty2019kinetic}.
{Letting $D^2\rho$ denotes the Hessian matrix of $\rho$ with respect to the variable $u$ and defining the covariance matrix and mean associated to such measures as
\begin{align}
    \mathfrak C(t) &:= \int (u - \mathfrak m(t))\otimes(u - \mathfrak m(t)) \d\rho(t,u),\label{eq:meanfield_cov}\\
    \mathfrak m(t) &:= \int u \d\rho(t,u), \label{eq:meanfield_mean}
\end{align}
the Fokker--Planck equation of mean-field EKI takes the form
\begin{align}\label{eq:mean-field-limit}
     \partial_t \rho = \div\left(\rho~\mathfrak C(t)A^T\Gamma^{-1}(A u - y)\right) + \frac12 \operatorname{Tr}(D^2\rho~\mathfrak C(t)A^T\Gamma^{-1}A\mathfrak C(t)).
\end{align}}
{Before continuing with a detailed exposition of set-up and notation, we start with a description of this manuscript's goals.}
\paragraph{Contributions of this article}\label{sec:contribution}
{
\begin{itemize}
    \item \cref{thm:cov_dynamics,thm:asymptotic_profile,thm:deterministic_EKI,thm:mean_field}: An explicit characterization of the  {\textbf{dynamics of {naive} and mean-field EKI}}, together with a detailed convergence analysis of these quantities for $t\to\infty$ and explicit rates of convergence.
    The main idea here is that the diagonalization of a suitable reweighting of the time-dependent covariance matrix remains constant in time which allows for a complete analysis of the dynamics of the different EKI variants.
    \item \cref{ex:nonmon}: A negative answer to the question \textbf{whether the residual spread of EKI decreases mono\-tonous\-ly}. 
    This quantity measures the average squared deviation of the ensemble to a specific solution of the inverse problem.
    This is even true for noise-free data and an optimal choice of solution.
    \item \cref{thm:eigenvectors}: A \textbf{spectral analysis of the covariance} $\cov(t)$ itself, leading to a coupled differential algebraic system characterizing the evolution of eigenvalues and eigenvectors of the ensemble covariance.
    \item \labelcref{eq:ODE_cov_emp,eq:ODE_mean_emp,eq:ODE_cov_mean_field,eq:ODE_mean_mean_field,eq:ODE_cov_av_emp,eq:ODE_mean_av_emp}:
    We show that \textbf{only mean-field EKI recovers the posterior} in unit time.
    This property is at the heart Kalman update formula \labelcref{eq:enk_it_1step} in the linear and Gaussian regime but gets lost by further discretization and in the continuum limit.
    {Although the EKI algorithm in \citep{iglesias2013ensemble} was only presented as an optimization method, clearly, variants that address the sampling problem are of interest, e.g., \citep{huang2022efficient}.}
\end{itemize}}

\paragraph{Various notions of mean and covariance}

In the context of different EKI methodologies it is very important to exactly specify what is meant by ``mean'' or ``covariance'': For given paths of Brownian motion, the ensemble given by \labelcref{eq:enkf_stoch} has an empirical (sample) covariance $ C$ \labelcref{eq:sample_cov} and an empirical (sample) mean $m$ \labelcref{eq:sample_mean}, i.e., the center of mass and a measure for the spread given by the particles. 
In addition, if we take the expectation $\E^\wiener$ with respect to the Wiener measure governing the Wiener noise terms $\wiener^j$, we get a different notion of mean and covariance, corresponding to the \emph{average {sample} mean} $\mathbf{m}=\E^\wiener m$  and \emph{average {sample} covariance} $\mathbf{C}=\E^\wiener C$. Finally, by considering the ``mean-field limit'' $J\to\infty$, one obtains a time-dependent measure $\rho_t$ with a mean and covariance (see \labelcref{eq:meanfield_mean,eq:meanfield_cov}).
Depending on the type of mean and covariance one is referring {to}, the governing equations for these two quantities change.

For instance, in the deterministic case of $\Sigma=0$ in \labelcref{eq:enkf_stoch}{, which we call ``naive EKI'',} the {sample} covariance and mean \labelcref{eq:sample_cov,eq:sample_mean} evolve according to (see~\cref{lem:derivation_ODEs})
\begin{align}
    \label{eq:ODE_cov_emp}
    \dot{ C}(t) &= -2 C(t) A^T \Gamma^{-1} A  C(t),\\
    \label{eq:ODE_mean_emp}
    \dot{m}(t) &= - C(t) A^T\Gamma^{-1}(A m(t)-y).
\end{align}
If $\Sigma=\Gamma$, these two empirical quantities evolve under a \emph{stochastic} differential equation, see \cref{lem:derivation_SDEs}.
In contrast, as outlined in \cite{garbuno2020interacting} and shown in \cref{lem:derivation_meanfield} the mean-field covariance and mean \labelcref{eq:meanfield_cov,eq:meanfield_mean} evolve according to the differential equation
\begin{align}
    \label{eq:ODE_cov_mean_field}
    \dot{\mathfrak C}(t) &= -\mathfrak C(t) A^T \Gamma^{-1} A \mathfrak C(t),\\
    \label{eq:ODE_mean_mean_field}
    \dot{\mathfrak m}(t) &= -\mathfrak C(t) A^T\Gamma^{-1}(A\mathfrak m(t)-y).
\end{align}
Finally, the average {sample} mean $\mathbf{m}(t):= \E^\wiener  m(t)$ and average {sample} covariance $\mathbf{C}(t) := \E^\wiener  C(t)$ of the EKI for $\Sigma = \Gamma$ evolve as (see \cref{lem:derivation_SDEs}):
\begin{align}
    \dot{\mathbf C}(t) &= -\frac{J+1}{J}\mathbf C(t) A^T \Gamma^{-1} A \mathbf C(t)\notag
    \\
    \label{eq:ODE_cov_av_emp}
    &\qquad 
    -
    \E^\wiener\left[(C(t)-\mathbf C(t))A^T\Gamma^{-1}A(C(t)-\mathbf C(t))\right],\\
    \dot{\mathbf m}(t) &= -\mathbf C(t) A^T\Gamma^{-1}(A\mathbf m(t)-y) 
    \notag
    \\
    \label{eq:ODE_mean_av_emp}
    &\qquad 
    -\E^\wiener\left[(C(t)-\mathbf C(t))A^T\Gamma^{-1}A(m(t)-\mathbf m(t))\right]
\end{align}
Note that {the first and the second as well as the leading term of the third covariance equation} only differ by constants.

{
\begin{remark}
The analysis presented in this manuscript can also straightforwardly be extended to the case of (linear) Tikhonov-regularized Ensemble Kalman inversion (TEKI) as developed in \cite{chada2020tikhonov}: Here, the data space is augmented by a copy of the parameter space in order to allow for additional Tikhonov regularization of the parameters in the form of a penalization term $\frac{1}{2}\|u\|_\Sigma^2$: Letting $E_{n\times n}$ denote the identity matrix in $\R^{n\times n}$ we set 
\begin{align*}
    \tilde A:= \begin{pmatrix}
    A\\ E_{n\times n}
    \end{pmatrix},\quad \tilde y:= \begin{pmatrix}
    y\\ 0_{n}
    \end{pmatrix},\quad \tilde \eps := \begin{pmatrix}
    \eps\\ \eta
    \end{pmatrix}
\end{align*}
where $\eta\sim \mathcal N(0, \Sigma)$ (the authors use the notation $C_0$ for the covariance of the artificial noise on parameter space, which clashes with our notation) such that $\tilde \eps \sim \mathcal N(0, \tilde \Gamma)$ with
\begin{align*}
    \tilde \Gamma = \begin{pmatrix}
    \Gamma & 0_{m\times n}\\ 0_{n\times m} & \lambda^{-1} \Sigma
    \end{pmatrix}
\end{align*}
Consequently, the dynamics of {sample} mean and covariance in the deterministic case (which is referred to as TEKI-flow by the authors of \cite{chada2020tikhonov}) are precisely in the form of \labelcref{eq:ODE_cov_emp,eq:ODE_mean_emp} and the corresponding mean-field and average stochastic equations are in the form of \labelcref{eq:ODE_cov_mean_field,eq:ODE_mean_mean_field,eq:ODE_cov_av_emp,eq:ODE_mean_av_emp}, respectively.
Hence, all the results of this manuscript can be directly applied to TEKI by replacing $A$ with $\tilde A$ etc.
In particular, our findings on the long time behavior from \cref{sec:particledynamics} are consistent with what was proved for TEKI in \cite{chada2020tikhonov}. 
{In a similar vein, we suspect our results to carry over (correcting for different numerical factors and additional bias terms) to a more general family of Ensemble Kalman-type methods, as the ones collected in \citep{huang2022efficient,huang2022iterated,calvello2022ensemble}.}
\end{remark}}
\paragraph{{Closed-form solutions of mean and covariance}}
As shown in \cite{garbuno2020interacting}, the solution of the mean-field equations \labelcref{eq:ODE_cov_mean_field,eq:ODE_mean_mean_field} can be explicitly computed and is given by
\begin{align}
    \label{eq:sol_cov_mean_field}
    \mathfrak C(t) &= \mathfrak C_0(E+tA^T\Gamma^{-1}A\mathfrak C_0)^{-1},\\
    \label{eq:sol_mean_mean_field}
    \mathfrak m(t) &= \mathfrak C(t)\mathfrak C_0^{-1}\mathfrak m_0 + t \mathfrak C(t)A^T\Gamma^{-1}y.
\end{align}
{where we define $E$ as the identity matrix in $\R^{m\times m}$.}
For $t=1$ these {quantities} coincide with the posterior mean and covariance associated to \labelcref{eq:inverse_prob}, which can be explicitly computed in this Gaussian and linear regime.
In contrast, as we will show in this paper that the solutions to the other systems of equations \emph{do not} recover the posterior exactly.
For instance, the solution to \labelcref{eq:ODE_cov_emp,eq:ODE_mean_emp} is given by
\begin{align}
    C(t) &=  C_0(E+2tA^T\Gamma^{-1}A C_0)^{-1},\\
    \label{eq:sol_mean_emp}
    m(t) &= \sqrt{ C(t) C_0^{-1}}  m_0 +  \sqrt{ C(t) C_0^{-1}} \int_0^t \sqrt{ C(s) C_0^{-1}}\d s\, C_0A^T\Gamma^{-1}y,
\end{align}
which does clearly not coincide with the posterior mean and covariance for any time $t\geq 0$.

{
\begin{remark}
{It is important to keep in mind} the fundamental philosophical distinction between equations \labelcref{eq:ODE_cov_emp,eq:ODE_mean_emp} on the one hand and equations \labelcref{eq:ODE_cov_mean_field,eq:ODE_mean_mean_field,eq:ODE_cov_av_emp,eq:ODE_mean_av_emp} on the other hand: The latter equations stem from the viewpoint of EnSMC, with the goal of transporting the prior to the posterior in one artificial time unit, which is why they manage to do exactly that. Meanwhile, the former equations come from the philosophy of optimization/regularized inversion, which is why they lack the property of recovering the posterior measure. The recently developed Ensemble Kalman sampler (EKS) \cite{garbuno2020interacting} is in some sense very similar to the EnSMC method, but is designed in order to approximate the posterior measure (in a Langevin-dynamics sense) for $t\to\infty$ rather than fitting this posterior exactly for $t=1$. In the linear setting, the mean and covariance of its mean-field limit are governed by
\begin{align}\label{eq:EKS_cov}
\dot{\mathfrak{C}}_{\mathrm{EKS}}(t) &=
-2\mathfrak{C}_{\mathrm{EKS}}(t)\left(A^T\Gamma^{-1}A + C_0^{-1}-\mathfrak{C}_{\mathrm{EKS}}^{-1}(t)\right) \mathfrak{C}_{\mathrm{EKS}}(t),\\
\label{eq:EKS_mean}
\dot {\mathfrak{m}}_{\mathrm{EKS}}(t) &= -\mathfrak{C}_{\mathrm{EKS}}(t)\left((A^T\Gamma^{-1}A + C_0^{-1})\mathfrak{m}_{\mathrm{EKS}}(t)-A^T\Gamma^{-1}y\right).
\end{align}
Here, $C_0$ is the prior measure's covariance (corresponding to $\Sigma$ in the TEKI methodology and $C_0$ for EKI).
It is obvious from \labelcref{eq:EKS_cov,eq:EKS_mean} that at convergence EKS yields the posterior covariance and mean.

{The relationship between EKS and EKI is very similar to how Langevin dynamics relates to gradient descent: The incorporation of prior information and noise acts as a counterpart to the contraction of covariance, which is why EKS approximates the posterior while EKI contracts around a candidate for the optimization problem.}
\end{remark}
}

\paragraph{A unified framework for ensemble analysis}

In this article we consider the following two differential equations
\begin{alignat}{2}
    \label{eq:ODE_cov}
    \dot{\cov}(t) &= -\alpha\cov(t) A^T \Gamma^{-1} A \cov(t), \quad &&\cov(0) = \cov_0,\\
    \label{eq:ODE_mean}
    \dot{\mean}(t) &= -\cov(t) A^T\Gamma^{-1}(A\mean(t)-y), \quad &&\mean(0) = \mean_0,
\end{alignat}
where $\alpha\geq 1$ is a free parameter. 
For $\alpha=2$ they reduce to the deterministic equations \labelcref{eq:ODE_cov_emp,eq:ODE_mean_emp} of the sample mean and covariance, for $\alpha=1$ to the mean-field equations \labelcref{eq:ODE_cov_mean_field,eq:ODE_mean_mean_field}, and for $\alpha=\frac{J+1}{J}$ to {the leading term of} the average sample equations \labelcref{eq:ODE_cov_av_emp,eq:ODE_mean_av_emp}.

With this we continue a long line of articles which analyze fine deterministic properties of EKI.
For instance, we refer to \cite{schillings2017analysis,schillings2018convergence,chada2020tikhonov,parzer2021convergence} for results, e.g., on existence, asymptotic behavior, and zero-noise consistency of EKI.
In the whole article we assume that $\cov_0$ is  symmetric and positive definite.
We record the most important {components} of our notation in \cref{tab:notation}.

As can be easily checked the ordinary differential equation \labelcref{eq:ODE_cov} for the covariance has the explicit solution
\begin{align}
    \label{eq:sol_cov}
    \cov(t) = \cov_0(E+\alpha tA^T\Gamma^{-1}A\cov_0)^{-1},
\end{align}
which is a simple time-rescaling of the mean-field solution \labelcref{eq:sol_cov_mean_field}.
However, as already seen above, the equation for the mean $\mean(t)$ cannot be obtained as simple as that.

In contrast, the formula for the sample mean which we obtain is given by the expression
\begin{align}\label{eq:sol_mean}
    \mean(t) = \left(\cov(t)\cov_0^{-1}\right)^\frac{1}{\alpha}\mean_0 + \left(\cov(t)\cov_0^{-1}\right)^\frac{1}{\alpha}\int_0^t\left(\cov(s)\cov_0^{-1}\right)^{1-\frac{1}{\alpha}}\d s \,\cov_0A^T\Gamma^{-1}y
\end{align}
and will be derived in \cref{sec:char_ode}.
Obviously, for $\alpha=1$ this coincides with \labelcref{eq:sol_mean_mean_field} and for $\alpha=2$ with \labelcref{eq:sol_mean_emp}.
We want to point out, though, that independent of $\alpha$ the asymptotic behaviour of particles and their covariance as $t\to\infty$ is the same since we can show that $\lim_{t\to\infty}\cov(t)\cov_0^{-1} =\lim_{t\to\infty}\left(\cov(t)\cov_0^{-1}\right)^p$ for every power $p>0$.

\begin{table}
\begin{tabular}[h]{l|l}
Notation & Meaning  \\
\hline
\hline
$C, m$ & \begin{minipage}{0.8\textwidth}{sample} covariance and mean of a given ensemble for EKI ({naive} or stochastic).\\[-0.8em]\end{minipage}\\
\hline
$\mathbf C, \mathbf m$ & \begin{minipage}{0.8\textwidth}Average (w.r.t. the Wiener measure $\{\wiener^j\}_{j}$) {sample} covariance and mean of a given ensemble for EKI.\\[-0.8em]
\end{minipage}\\
\hline
$\mathfrak C, \mathfrak m$ & Mean and covariance of mean-field limit of stochastic EKI.\\
\hline
$\cov, \mean$ & \begin{minipage}{0.8\textwidth}
Stand-in for $(C,m),(\mathfrak C, \mathfrak m)$ and $(\mathbf C, \mathbf m)$, respectively, for a unified calculation valid for all special cases.\\[-0.8em]\end{minipage} \\
\end{tabular}
    \caption{Notation in this manuscript}
    \label{tab:notation}
\end{table}

\paragraph{Time-invariant diagonalization of $\cov(t)\cov_0^{-1}$}

Although we will use the explicit expression \labelcref{eq:sol_cov} for $\cov(t)$ in the following, we want to point out that many statements given can be rephrased in a form which only requires knowledge of the following fact: 
While $\cov(t)$ changes its diagonalization over time (see \cref{sec:spectral}), the eigenvectors of $\cov(t)\cov_0^{-1}$ stay constant. 
This can be seen by defining $\cov^c(t) = \cov(t) \cov_0^{-1}$, by which \labelcref{eq:ODE_cov} becomes $\dot\cov^c(t) = -\alpha\cov^c(t) \cov_0A^T\Gamma^{-1}A \cov^c(t)$. 
If we diagonalize $\cov_0 A^T\Gamma^{-1}A = SDS^{-1}$ (which we justify in the following) and set $\cov^d(t) := S^{-1}\cov^c(t) S$, we obtain the matrix ordinary differential equation
\[ \dot \cov^d(t) = -\alpha \cov^d(t) D \cov^d(t).\]
This decouples into a set of $n$ scalar ordinary differential equations since $\cov^d(0) = E$ and $D$ is a diagonal matrix.

\paragraph{Approximating the posterior for $t=1$}

Related to the ensemble Kalman inversion is the Bayesian approach of solving the inverse problem $Au=y$ by computing the Maximum A Posteriori (MAP) estimator
\begin{align}\label{eq:MAP}
    u_\mathrm{MAP}(t) = \argmin \frac{t}{2} \norm{Au-y}_\Gamma^2 + \frac{1}{2} \norm{u-m_0}_{C_0}^2,
\end{align}
where the prior $u \sim \mathcal{N}(m_0,C_0)$ plays the role of a Tikhonov-type regularization.
Using the optimality conditions of \labelcref{eq:MAP}, one can explicitly compute the MAP estimator as
\begin{align}\label{eq:MAP_sol}
     u_\mathrm{MAP}(t) = m_0 + t C_0 (E +t A^T \Gamma^{-1} A C_0)^{-1}A^T \Gamma^{-1}(y-A m_0).
\end{align}
Here $t>0$ is a free parameter which is usually set to $1$.
The MAP estimator has some heuristic relations to the discrete ensemble Kalman inversion \labelcref{eq:enk_it}.
In the light of \labelcref{eq:enk_it_means_optim} it is obvious that the MAP estimator can be seen as one-step explicit Euler discretization of ordinary differential equation \labelcref{eq:ODE_mean} which drives the sample means $t\mapsto m(t)$ of the time-continuous ensemble Kalman inversion \labelcref{eq:enkf_stoch}.
In particular, if one performs one iteration of \labelcref{eq:enk_it} with $\tau=1$ and $\Sigma=0$, then the sample mean coincides with $u_\mathrm{MAP}$ for $t=1$. {This can be seen by comparing \labelcref{eq:MAP_sol} for $t=1$ and the sample mean of \labelcref{eq:enk_it} for $\tau=1$ and $\Sigma=0$ (i.e., unperturbed observation) side by side. This is under the condition that the initial ensemble $\{u_0^j\}_{j=1}^J$ has the same statistical properties (sample mean and covariance) as the Bayesian prior $\mu_0$. This is interesting in light of the fact that this property is violated for the multi-step case (i.e., $\tau\neq 1$), and also its continuous limit given by \labelcref{eq:enkf_stoch}, both in the stochastic case as well as for naive EKI ($\Sigma = 0$).}

One might hope that a similar property is true for the ensemble mean $\mean(t)$, governed by \labelcref{eq:ODE_cov,eq:ODE_mean}.
This is unfortunately not the case for general $\alpha>0$, as can be seen from the explicit solution \labelcref{eq:sol_cov,eq:sol_mean}, which we derive.
It can be seen that, with the only exception of $\alpha=1$, there is no time $t\geq 0$ such that $\cov(t)$ and $\mean(t)$ constitute the mean and covariance of the posterior, not even in the linear Gaussian setting.
The fundamental problem here is that one-step time discretizations are generally not exact.
For readers interested in this issue we refer to \cite{burger2016spectral,bungert2019nonlinear}, where it was classified when one-step time discretizations and gradient flows of one-homogeneous functionals coincide.

\begin{figure}[t]
    \centering
    \includegraphics[width=0.49\textwidth,trim=1cm .5cm 1.5cm 1cm,clip]{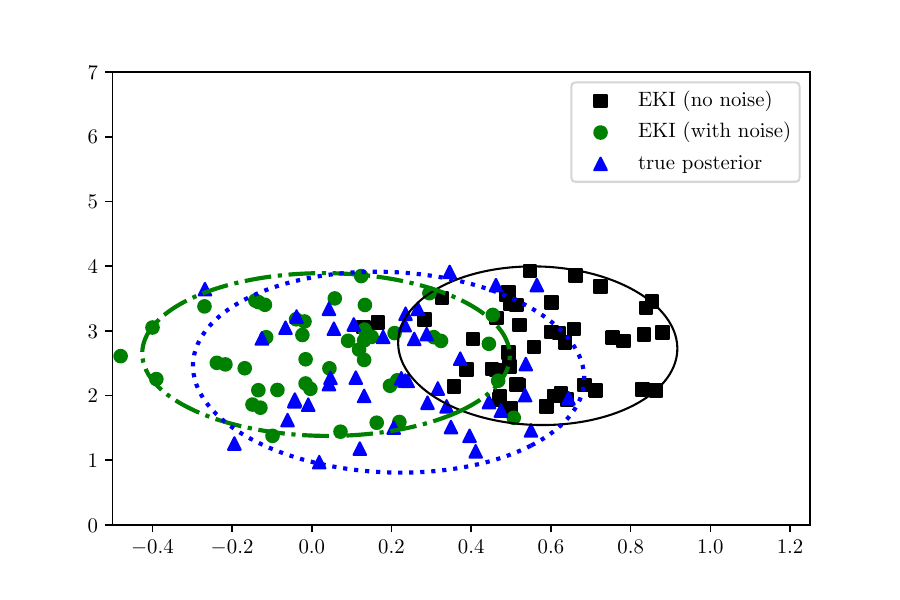}
    \hfill%
    \includegraphics[width=0.49\textwidth,trim=1cm .5cm 1.5cm 1cm,clip]{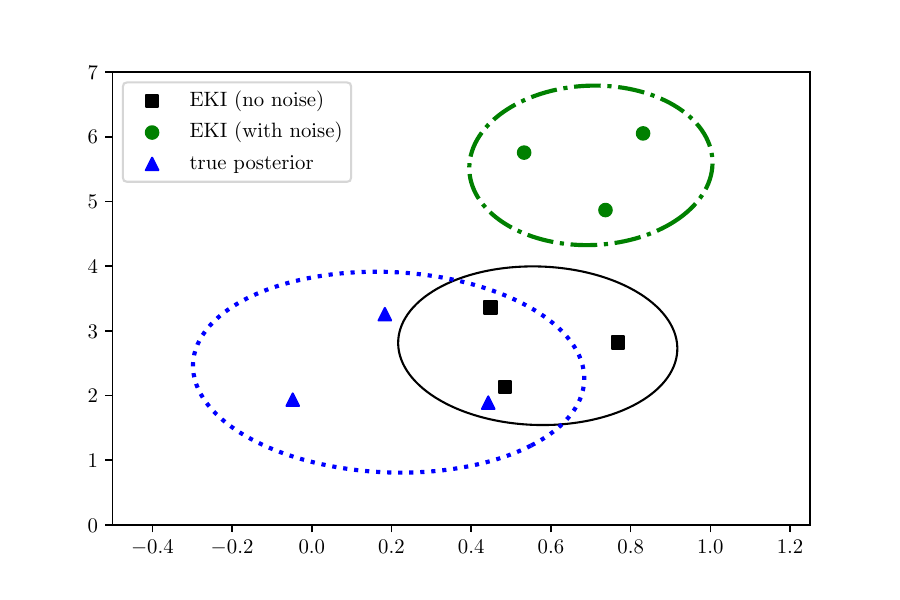}\\
    \centering
    \hfill%
    \caption{Implementation of EKI dynamics for ensemble size 45 (left) and 3 (right) for time $t=1$. \textbf{Blue:} Ensemble matching the true posterior (triangles, dotted line). \textbf{Black:} Ensemble from {naive} EKI for $t=1$ (squares,  solid line). \textbf{Green:} One realization of an ensemble from the stochastic EKI (circles, dash-dot line). This EKI approximately recovers the posterior measure, but only for large ensemble sizes (due to the noise and the fact that $J=3$ is not sufficiently close to the mean-field limit). Approximating the mean-field limit, i.e., taking a large ensemble $J=45$ improves the quality of approximation. Note that mean and covariance of {naive} EKI are identical between the two figures as they do not depend on $J$, unlike the stochastic EKI. Axes are scaled for visualization purposes. Ellipses visualize one standard deviation of the {sample} covariances involved. }
    \label{fig:covariances}
\end{figure}

For illustration purposes, we compare an ensemble of particles which matches the true posterior with the result of the EKI for $t=1$ with and without noise perturbation in \cref{fig:covariances}.
The set-up here is $A = \diag(4,1)$, the Gaussian prior has mean and covariance
\begin{align*}
    \mean_0 = 
    \begin{pmatrix}
    4\\4
    \end{pmatrix},\quad
    \cov_0 = 
    \begin{pmatrix}
    2 & -1 \\ -1 & 2
    \end{pmatrix},
\end{align*}
and data is given by $y=(0,0)^T$.
One can clearly see that the {sample} mean and covariances of the unperturbed EKI are quite different from the posterior samples. The perturbed EKI approximates the posterior better but also not exactly. This is due to two reasons: First, there is perturbative noise in the stochastic EKI dynamics, so the ensemble cannot match the posterior exactly due to stochasticity (only the mean-field dynamics exactly recovers the posterior for $t=1$). Second, the factor of $\alpha = (J+1)/J$ is not equal to $1$ for finite ensemble sizes, hence even the average (w.r.t. Wiener measure) dynamics is not identical to the posterior-recovering dynamics \labelcref{eq:ODE_cov_mean_field,eq:ODE_mean_mean_field}. 
However, with increasing number of particles the approximation gets better since one then enters the mean-field regime, which recovers the posterior.

\paragraph{Ensemble collapse and non-monotonous residual convergence}
Besides sample covariance and mean it is also interesting to consider the \emph{deviations} $e^j(t)$, the \emph{residuals} $r^j(t)$, and the \emph{residual mean} $r(t)$ of the {naive} EKI dynamics given by \labelcref{eq:enkf_stoch} for $\Sigma = 0$, i.e., 
\begin{equation*} 
     \dot{u}^j(t) = - C(t) A^T\Gamma^{-1}(Au^j(t)-y).
\end{equation*}
Deviations, residuals, and residual mean are defined through
\begin{align*}
    e^j(t) &:= u^j(t)-m(t),\\
    r^j(t) &:=u^j(t) - {u},\\
    r(t) &:= \frac{1}{J}\sum_{j=1}^J r^j(t) = m(t) - {u},
\end{align*}
respectively.
Here, ${u}$ is a suitable parameter value with respect to the data which will be specified in \cref{sec:particledynamics}.
If $A$ has a nontrivial kernel, there is a whole subspace of possible candidates for ${u}$. We will see that there is a canonical choice depending on the initial ensemble $\{u^j_0\}_{j=1}^J$.

Naturally, one can also study similar quantities for stochastic or mean-field EKI (with very similar techniques) but this is beyond the scope of this paper.

Following from \labelcref{eq:ODE_cov_emp,eq:ODE_mean_emp} deviations, residuals, and residual mean evolve according to the same differential equation 
\begin{align}\label{eq:ODE_residuals}
    \dot{x}(t) &= -C(t) A^T\Gamma^{-1}A x(t),\quad x\in\{e^j,r^j,r\},\\
    \dot C(t) &= -2C(t)A^T\Gamma^{-1}AC(t),
\end{align}
and differ only in their initialization $x_0\in \{e^j_0, r^j_0,r_0\}$. 

As we will see, the explicit solution to \labelcref{eq:ODE_residuals} with initial condition $x(0)=x_0$ is given by
\begin{align}\label{eq:sol_ODE_residuals}
    x(t) = \sqrt{C(t)C_0^{-1}} x_0.
\end{align}
Hence, for understanding the asymptotic behavior of the deviations, residuals, and residual mean, it suffices to characterize the asymptotic behavior of the {sample} covariance matrix $C(t)$, or rather the preconditioned covariance matrix $C(t)C_0^{-1}$.

Of particular interest in the context of ensemble Kalman inversion are the ensemble and residual spreads, defined as
\begin{align}
    \label{eq:ensemble_spread}
    V_e(t) = \frac{1}{2J}\sum_{j=1}^J \norm{e^j(t)}^2,\\
    \label{eq:residual_spread}
    V_r(t) = \frac{1}{2J}\sum_{j=1}^J \norm{r^j(t)}^2.
\end{align}
It is well-known that the ensemble spread $V_e$ is monotonically decreasing (this is the deterministic analogue of Theorem~3.4 in \cite{blomker2019well}), which we call \textit{ensemble collapse}.
In contrast, the residual spread $V_r$ does not decrease monotonously in general (even in the zero-noise setting and for the optimal choice of ${u}$ in the definition of $r^j$) and we devote \cref{sec:spreads} to a detailed study of this issue.

Mapping the deviations and residuals into observation space, one can define the functions
\begin{align}
    \label{eq:fwd_ensemble_spread}
    \mathfrak V_e(t) = \frac{1}{2J}\sum_{j=1}^J \norm{Ae^j(t)}_\Gamma^2,\\
    \label{eq:fwd_residual_spread}
    \mathfrak V_r(t) = \frac{1}{2J}\sum_{j=1}^J \norm{Ar^j(t)}_\Gamma^2,
\end{align}
which indeed decrease monotonously, as shown in \cite{schillings2017analysis}.

As far as the authors are aware, there has not been any exhaustive discussion of the convergence of deterministic particle dynamics and residuals of the ensemble Kalman inversion method for the infinite-time limit in parameter space: 
The introduction of the ensemble Kalman methodology was carried out in \cite{iglesias2013ensemble}, with no analysis of its dynamics for $t\to \infty$.
In \cite{kelly2014well} it was proved (although in a more general data assimilation setting) that the dynamics do not blow up in finite time by bounding their growth exponentially, but numerical experiments soon suggested that a lot more would be feasible. 
The first long-time analysis of the EKI for Bayesian inverse problems (outside of the physically time-dependent data assimilation domain) was conducted in \cite{schillings2017analysis}, with a sequel in \cite{schillings2018convergence}, but is constrained to the observation space which -- for the case of lower-rank maps $A$ -- only allows control on a subspace of the parameter space (given by the orthogonal complement of $A$).
Similarly, \cite{blomker2018strongly} and \cite{blomker2019well} proved convergence to zero of the ensemble spread but could prove convergence of the residuals only in observation space and under additional variance inflation. 
The missing piece, i.e., a full deterministic convergence analysis of the ensemble Kalman inversion methodology for linear Bayesian inverse problems, is provided with this manuscript.

\begin{remark}[subspace property]\label{rem:infdim}
{In \cite{emerick2013ensemble,iglesias2013ensemble}}, it was shown that the EKI dynamics (both in {naive} and stochastic version) stays in the affine subspace spanned by the initial ensemble. This means that even for a infinite-dimensional inverse problem, the EKI method lives in finite-dimensional space which is why we assume this a-priori. The setting of ensemble Kalman inversion on an infinite-dimensional parameter space can be rephrased as a finite-dimensional problem by describing all quantities involved by the span of the initial ensemble. By the subspace property, we never leave this finite-dimensional span. 
Most realistic measurement processes generate finite-dimensional even for infinite-dimensional parameter models. This means that assuming both parameter and observation to be finite-dimensional quantities is not a strong restriction in most contexts.

{%
Furthermore, the subspace property allows one to restrict all considerations to the affine subspace, spanned by the initial ensemble, where the covariance matrix $C_0$ is invertible.
Hence, we assume the latter for the rest of the paper.
Furthermore, all ground truth quantities must be interpreted as being projected onto this affine subspace of the parameter space.}
\end{remark}

We also remark that the ``time parameter'' $t$ rather corresponds to a regularization parameter and plays the role of supposedly (but in a strict sense only for the mean-field limit of the stochastic EKI) interpolating between prior ($t=0$) and posterior ($t=1$), with $t\to\infty$ corresponding to the limit of vanishing regularization. 
This means that $t$ is not physical time and does also not keep track of data accumulating over time, as it is in the setting of data assimilation. 
Relevant research on the ensemble Kalman--Bucy filter and the ensemble Kalman inversion and their respective long-time behaviour include \cite{kelly2014well,amezcua2014ensemble,tong2016nonlinear,de2018long,ding2019mean,chada2020tikhonov}, with \cite{bergemann2009ensemble,bergemann2010localization,bergemann2010mollified,bergemann2012ensemble} being particularly relevant to our approach of analysing the ensemble's {sample} mean and covariance.
For a very well-written and extensive review of research on the stochastic ensemble Kalman methodology and its interpretation as metric gradient flow, we refer to \cite{garbuno2020interacting} and the references therein.


\section{Spectral decomposition of preconditioned covariance}\label{sec:spec_precond}

We recall that we want to analyze \labelcref{eq:ODE_cov}, which is
\begin{align*}
    \dot{\cov}(t) = -\alpha\cov(t) A^T \Gamma^{-1} A \cov(t), \quad \cov(0) = \cov_0
\end{align*}
with $\alpha > 0$ being a parameter unifying the treatment of {naive}, average stochastic, and mean-field EKI.
We will use the closed-form solution~\labelcref{eq:sol_cov}
\begin{align*}
    \cov(t) = \cov_0(E+\alpha tA^T\Gamma^{-1}A\cov_0)^{-1}
\end{align*}
in order to derive a spectral decomposition of $\cov(t)\cov_0^{-1}$ and study asymptotic profiles.

Since the explicit solution alone does not provide us with sufficient insight about the behavior of $\cov(t)$, we pursue a different strategy for constructing a solution. 
Additionally, we hope that the methods used will extend to cases where explicit solutions can not be constructed (e.g., the nonlinear or stochastic case).

The following two lemmas will be needed for the construction. {They are not novel results, but for completeness of this manuscript we record their proofs here}.
We note that in this manuscript we will consider positive definite matrices which are not necessarily symmetric.
{The following result is well-known, albeit a proof is hard to find. For completeness we give a proof below.}

\begin{lemma}\label{lem:product_diagonalizable}
Given two symmetric matrices $V,W\in\R^{n\times n}$ with at least one of them being positive definite, the products $VW$ and $WV$ are diagonalizable. If the other matrix is also at least positive (semi)definite, then $VW$ and $WV$ are also positive (semi)definite.
\end{lemma}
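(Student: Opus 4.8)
The plan is to reduce the statement to the spectral theorem for \emph{symmetric} matrices by means of a similarity transformation with a square root of the positive definite factor. Without loss of generality assume that $V$ is the positive definite one; if instead $W$ is positive definite, one simply exchanges the roles of $V$ and $W$, which only swaps the two products $VW$ and $WV$. Since $V$ is symmetric and positive definite, it admits a unique symmetric positive definite square root $V^{1/2}$, which in particular is invertible with inverse $V^{-1/2}$.

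Next I would verify that both products are similar to one and the same symmetric matrix. Setting $S := V^{1/2} W V^{1/2}$, a direct computation gives
\[ V^{-1/2}(VW)V^{1/2} = V^{1/2} W V^{1/2} = S, \qquad V^{1/2}(WV)V^{-1/2} = V^{1/2} W V^{1/2} = S. \]
Because $W$ and $V^{1/2}$ are symmetric, one has $S^T = V^{1/2} W^T V^{1/2} = S$, so $S$ is symmetric and hence orthogonally diagonalizable with real eigenvalues by the spectral theorem. As $VW$ and $WV$ are each similar to $S$, they are both diagonalizable and share the real spectrum of $S$.

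Finally, for the definiteness claim, suppose in addition that $W$ is positive (semi)definite. Then $S = V^{1/2} W V^{1/2}$ is a congruence transform of $W$, and $\langle x, Sx\rangle = \langle V^{1/2}x, W V^{1/2}x\rangle \ge 0$, with strict inequality for $x\neq 0$ in the definite case by invertibility of $V^{1/2}$. Hence $S$ is symmetric positive (semi)definite, so all its eigenvalues — and therefore those of the similar matrices $VW$ and $WV$ — are nonnegative (respectively positive). With the notion of positive (semi)definiteness for a possibly non-symmetric matrix understood as being similar to a symmetric positive (semi)definite matrix, equivalently as being diagonalizable with a nonnegative (respectively positive) real spectrum, this is exactly the assertion. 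I expect the only genuine subtlety to be this last point, namely pinning down what ``positive definite but not necessarily symmetric'' means and checking that the congruence $S = V^{1/2}WV^{1/2}$ transports definiteness correctly; the two similarity computations and the appeal to the spectral theorem are routine.
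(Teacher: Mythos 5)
Your proof is correct and takes essentially the same approach as the paper: conjugation by the square root of the positive definite factor reduces both products $VW$ and $WV$ to the symmetric matrix $V^{1/2}WV^{1/2}$, after which the spectral theorem yields diagonalizability. The only cosmetic difference is the definiteness step, where the paper factors $W^{1/2}VW^{1/2}=(W^{1/2}V^{1/2})(V^{1/2}W^{1/2})$ using a square root of the second matrix, while you verify positivity of the congruent matrix directly via the quadratic form $\langle x,Sx\rangle=\langle V^{1/2}x,\,WV^{1/2}x\rangle$ --- both are standard and your explicit remark on what (semi)definiteness means for the non-symmetric products is a welcome clarification the paper leaves implicit.
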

\begin{proof}
Without loss of generality assume that $W$ is positive definite. Thus there exists a non-singular square root matrix $W^{1/2}$. Then
\[W^{-1/2}WVW^{1/2} = W^{1/2}VW^{1/2}.\]
The matrix $W^{1/2}VW^{1/2}$ is symmetric and thus diagonalizable. The left hand side is a similarity transform of the product $WV$. This shows that $WV$ is diagonalizable. The statement is proven for $VW$ by arguing the same way for $W^{1/2}VWW^{-1/2}$.
For the other statement note that if $V$ is positive (semi)definite in addition, then there also exists a square root $V^{1/2}$ and then $W^{1/2}VW^{1/2} = (W^{1/2}V^{1/2})(V^{1/2}W^{1/2})$ which is positive (semi)definite and hence this property holds for the matrices $VW$ and $WV$. 
\end{proof}

\begin{lemma}\label{lem:algebraic_sol}
Let $M$ be diagonalizable with non-negative eigenvalues such that 
$$M = S\diag(\mu_1,\ldots,\mu_k, 0, \ldots, 0)S^{-1}.$$
The columns of $S=(w_1,\ldots,w_n)$ are the eigenvectors such that $Mw_i = \mu_i w_i$.

Then $R(t) = (E+tM)^{-1}$ exists for all $t\in \R\setminus\{-\mu_1^{-1},\dots,-\mu_n^{-1}\}$ and has the form 
\begin{align*}
    R(t) = S\diag\left(\frac{1}{1+t\mu_i}\right)_{i=1}^n S^{-1}.
\end{align*}
Also, $R_\infty := \lim_{t\to\infty}R(t)$ exists and has the same eigenvectors as $M$, but with 
\begin{align*}
    R_\infty w_i = 
    \begin{cases}
        0&\text{ for } i =1,\ldots, k,\\
        w_i&\text{ for } i=k+1,\ldots,n.
    \end{cases}
\end{align*}
\end{lemma}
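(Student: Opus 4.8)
The plan is to exploit the given diagonalization $M = S D S^{-1}$ with $D = \diag(\mu_1, \ldots, \mu_k, 0, \ldots, 0)$ together with the observation that the identity is diagonalized by the same transform, $E = S E S^{-1}$. First I would write
\[
E + tM = S\,(E + tD)\,S^{-1},
\]
so that $E + tM$ is similar to the diagonal matrix $E + tD = \diag(1 + t\mu_i)_{i=1}^n$. A similarity transform preserves invertibility, and a diagonal matrix is invertible precisely when none of its diagonal entries vanishes. Hence $R(t) = (E + tM)^{-1}$ exists if and only if $1 + t\mu_i \neq 0$ for every $i$. For the indices $i > k$ with $\mu_i = 0$ this condition reads $1 \neq 0$ and is automatic, so the only constraints come from the nonzero eigenvalues and amount to $t \neq -\mu_i^{-1}$; this is exactly the excluded set. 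On its complement one inverts the diagonal factor entrywise and conjugates back, yielding the claimed closed form
\[
R(t) = S\,\diag\!\left(\frac{1}{1+t\mu_i}\right)_{i=1}^n S^{-1}.
\]

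For the limit I would pass $t \to \infty$ inside each diagonal entry separately. When $\mu_i > 0$ the entry $1/(1 + t\mu_i) \to 0$, and when $\mu_i = 0$ the entry equals $1$ identically. Thus the diagonal factor converges to $\diag(0,\ldots,0,1,\ldots,1)$ (with $k$ zeros followed by $n-k$ ones), and since $S$ and $S^{-1}$ are constant, $R_\infty := \lim_{t\to\infty} R(t) = S\,\diag(0,\ldots,0,1,\ldots,1)\,S^{-1}$ exists. To read off its action on the eigenvectors, I would use $S^{-1} w_i = e_i$ and $S e_i = w_i$: applying $R_\infty$ to $w_i$ reduces to multiplying $e_i$ by the $i$-th diagonal entry, which is $0$ for $i \leq k$ and $1$ for $i > k$, giving $R_\infty w_i = 0$ and $R_\infty w_i = w_i$ respectively. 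This simultaneously exhibits $R_\infty$ as the spectral projection onto $\ker M$ along the span of the remaining eigenvectors.

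I do not expect a genuine obstacle here, since the argument is essentially the functional calculus for a diagonalizable matrix. The only point requiring a word of care is the bookkeeping of the excluded times: the list $\{-\mu_1^{-1}, \ldots, -\mu_n^{-1}\}$ must be read as containing only the reciprocals of the strictly positive eigenvalues, because the zero eigenvalues contribute the constant entry $1$ and hence impose no restriction on $t$. Making this interpretation explicit is the one subtlety worth flagging.
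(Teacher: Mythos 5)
Your proposal is correct and follows essentially the same route as the paper's proof: conjugating $E+tM = S(E+tD)S^{-1}$, inverting the diagonal factor entrywise, and passing the limit $t\to\infty$ inside each entry. Your explicit remark that the excluded set $\{-\mu_i^{-1}\}$ only concerns the strictly positive eigenvalues is a welcome clarification the paper leaves implicit (it restricts to $i=1,\ldots,k$ without comment), but it does not change the argument.
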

\begin{proof}
The invertibility of $(E+tM)$ for $t\neq -\mu_i^{-1}$, $i=1,\ldots,k$, follows directly from the spectral properties of $M$. Note that 
\[R(t) = (E+t S D S^{-1})^{-1} = S(E+tD)^{-1}S^{-1} = S\diag\left(\frac{1}{1+t\mu_i}\right)_{i=1}^n S^{-1}.\]
This proves that indeed $R(t)$ has the same spectral decomposition as $M$ for all $t\in\R$ with
\[R(t)w_i = \frac{1}{1+t\mu_i}w_i\]
and the limit $t\to\infty$ as claimed.
\end{proof}

\subsection{Diagonalization of the preconditioned covariance}\label{sec:diagonalization}
Now we are ready to formulate the first theorem of this section. 
The main idea here is that, while an eigenbasis of $\cov(t)$ seems to behave erratically (stretching and rotating in a way to assimilate both the current covariance structure and the influence of the observation operator), the matrix $\cov(t)\cov_0^{-1}$ has fixed eigenvectors which equal those of $\cov_0 A^T\Gamma^{-1}A$ (albeit with different eigenvalues). 
This already follows from \cite{garbuno2020interacting}, where it is shown that $\cov(t)^{-1}$ is linear in time and that $\cov(t)^{-1}\cov_0$ has eigenvectors which do not depend on $t$ and allows to derive an explicit form of the solution of the EKI dynamics.
\begin{theorem}[covariance dynamics]\label{thm:cov_dynamics}
Let $\cov(t) = (E+\alpha t\cov_0A^T\Gamma^{-1}A)^{-1}\cov_0$ denote the solution of \labelcref{eq:ODE_cov} with initial condition $\cov_0$.
Then it holds:
\begin{itemize}
    \item The matrix $\cov_0 A^T\Gamma^{-1}A$ is diagonalizable, meaning that $\cov_0 A^T\Gamma^{-1}A = SDS^{-1}$ with $D = \diag(\mu_1,\dots,\mu_k,0,\dots,0)$ where $\mu_1\geq \cdots \geq\mu_k>0$ for some $k\leq n$.
    \item Letting $E(t):=\diag\left(\frac{1}{1+\alpha t\mu_i}\right)_{i=1}^n$, it holds that
    \begin{align*}
        \cov(t) &= S E(t) S^{-1} \cov_0,\\
        \cov(t)A^T\Gamma^{-1}A &= SD(t)S^{-1},\\
        \cov_0A^T\Gamma^{-1}A\cov(t) &= SD(t)S^{-1}\cov_0,
    \end{align*}
    where $D(t) := DE(t) = \diag\left( \frac{\mu_i}{1+\alpha t\mu_i}\right)_{i=1}^n$. Note that $D(0) = D$ and $E(0) = E$. 
    \item $\cov_\infty :=\lim_{t\to\infty}\cov(t) = SE_\infty S^{-1} \cov_0$, where $E_\infty = \diag(0,\ldots,0,1,\ldots, 1)$ has $k$ zero entries and $n-k$ entries of one, has the property that \[A\cov_\infty=0.\] 
\end{itemize}
\end{theorem}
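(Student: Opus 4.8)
The plan is to derive all three bullet points from the two preparatory lemmas together with one elementary kernel identity, treating the closed form for $\cov(t)$ as given. The guiding observation is that $\cov_0A^T\Gamma^{-1}A$ is exactly the matrix appearing inside the resolvent $(E+\alpha t\cdot)^{-1}$, so once it is diagonalized the whole dynamics decouples.

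For the first bullet I would note that $A^T\Gamma^{-1}A$ is symmetric positive semidefinite, since $\Gamma^{-1}$ is positive definite and $\langle v,A^T\Gamma^{-1}Av\rangle=\norm{Av}_\Gamma^2\ge 0$, while $\cov_0$ is symmetric positive definite by assumption. Applying \cref{lem:product_diagonalizable} with $V=A^T\Gamma^{-1}A$ and $W=\cov_0$ immediately yields that $\cov_0A^T\Gamma^{-1}A$ is diagonalizable with non-negative eigenvalues. Ordering the eigenvalues decreasingly and collecting the zeros at the end produces $\cov_0A^T\Gamma^{-1}A=SDS^{-1}$ with $D=\diag(\mu_1,\dots,\mu_k,0,\dots,0)$, $\mu_1\ge\cdots\ge\mu_k>0$, where $k$ is the number of positive eigenvalues.

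For the second bullet I would substitute the diagonalization into the closed form. Writing $E+\alpha t\cov_0A^T\Gamma^{-1}A=S(E+\alpha tD)S^{-1}$ and invoking \cref{lem:algebraic_sol} with $M=\cov_0A^T\Gamma^{-1}A$ and parameter $\alpha t$ gives $(E+\alpha t\cov_0A^T\Gamma^{-1}A)^{-1}=SE(t)S^{-1}$, hence $\cov(t)=SE(t)S^{-1}\cov_0$. The remaining two identities are then pure bookkeeping: using $\cov_0A^T\Gamma^{-1}A=SDS^{-1}$ and that the diagonal matrices $D$ and $E(t)$ commute, one gets $\cov(t)A^T\Gamma^{-1}A=SE(t)S^{-1}SDS^{-1}=SE(t)DS^{-1}=SD(t)S^{-1}$ and likewise $\cov_0A^T\Gamma^{-1}A\cov(t)=SDS^{-1}SE(t)S^{-1}\cov_0=SD(t)S^{-1}\cov_0$, with $D(t)=DE(t)$.

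For the third bullet the limit formula follows by letting $t\to\infty$ entrywise in $E(t)=\diag(1/(1+\alpha t\mu_i))_i$: entries with $\mu_i>0$ tend to $0$ and those with $\mu_i=0$ stay equal to $1$, so $E(t)\to E_\infty$ and $\cov_\infty=SE_\infty S^{-1}\cov_0$. The one step that needs an actual idea is $A\cov_\infty=0$. Writing $S=(w_1,\dots,w_n)$, the matrix $SE_\infty S^{-1}$ is the projector onto $\mathrm{span}(w_{k+1},\dots,w_n)$ along $\mathrm{span}(w_1,\dots,w_k)$, so $\ran\cov_\infty\subseteq\mathrm{span}(w_{k+1},\dots,w_n)$. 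For $i>k$ we have $\cov_0A^T\Gamma^{-1}Aw_i=0$, and since $\cov_0$ is invertible this forces $A^T\Gamma^{-1}Aw_i=0$; testing against $w_i$ gives $\norm{Aw_i}_\Gamma^2=0$, hence $Aw_i=0$. Thus $\ran\cov_\infty\subseteq\ker A$ and $A\cov_\infty=0$. I expect the only genuine obstacle to be this last kernel identity $\ker(\cov_0A^T\Gamma^{-1}A)=\ker A$, resting on invertibility of $\cov_0$ and positive definiteness of $\Gamma^{-1}$; everything else reduces to substituting the diagonalization into the closed form and manipulating commuting diagonal matrices, so no serious difficulty is anticipated there.
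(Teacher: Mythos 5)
Your proposal is correct and follows essentially the same route as the paper: \cref{lem:product_diagonalizable} for diagonalizability of $\cov_0A^T\Gamma^{-1}A$, \cref{lem:algebraic_sol} applied to the closed-form resolvent for the time-dependent identities, and the entrywise limit of $E(t)$ for $\cov_\infty$. The only cosmetic difference is the final step, where you verify $Aw_i=0$ eigenvector-by-eigenvector using invertibility of $\cov_0$ and positive definiteness of $\Gamma^{-1}$, whereas the paper computes $\cov_\infty A^T\Gamma^{-1}A = SE_\infty DS^{-1}=0$ and transposes---both arguments rest on exactly the same facts.
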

\begin{proof}
Using \cref{lem:product_diagonalizable} the matrix $\cov_0A^T\Gamma^{-1}A$ is positive semidefinite and diagonalizable as $\cov_0A^T\Gamma^{-1}A = S \diag(\mu_1,\ldots,\mu_k, 0, \ldots, 0)S^{-1}$.
From \labelcref{eq:sol_cov} we have an explicit solution of the covariance ordinary differential equation~\labelcref{eq:ODE_cov} given by $\cov(t) = (E+\alpha t\cov_0A^T\Gamma^{-1}A)^{-1}\cov_0$. Hence, by \cref{lem:algebraic_sol} we can express this as $\cov(t)= S E(t) S^{-1}\cov_0$ for all $t\geq 0$, with 
\begin{align*}
E(t)=\diag\left(\frac{1}{1+\alpha t\mu_i}\right)_{i=1}^n    
\end{align*}
as claimed. The characterization of $\cov(t)A^T\Gamma^{-1}A$ follows directly by seeing that
\begin{align*}
    \cov(t)A^T\Gamma^{-1}A &= SE(t)S^{-1}\cov_0A^T\Gamma^{-1}A = SE(t)DS^{-1} = SD(t)S^{-1}.
\end{align*}
Using $D E_\infty = 0$, one calculates
\begin{align*}
    \cov_\infty A^T\Gamma^{-1}A &= SE_\infty S^{-1} \cov_0 A^T\Gamma^{-1}A = SE_\infty S^{-1}SDS^{-1} = SE_\infty DS^{-1} = 0.
\end{align*}
This means that, by transposing this equality, we also have $A^T\Gamma^{-1}A\cov_\infty = 0$, which is equivalent to $A \cov_\infty=0$.
\end{proof}

\subsection{Asymptotic profiles}

In the previous section we have used a diagonalization to construct an explicit solution of the covariance matrix $\cov(t)$ and understand its asymptotic behavior as $t\to\infty$. 
Now we study ``second-order'' asymptotics by investigating the asymptotic behavior of the time derivative $\dot{\cov}(t)$ of the covariance.
More precisely, we study asymptotic profiles of $\cov(t)$ in the spirit of \cite{bungert2019asymptotic}, which are defined as limit of the approximate time derivative at infinity
\begin{align*}
    \lim_{t\to\infty}t(\cov(t) - \cov_\infty),
\end{align*}
typically solve a nonlinear eigenvalue problem, and constitute self-similar solutions of the dynamics.

To set the scene, we rewrite the covariance dynamics \labelcref{eq:ODE_cov} in the abstract form
\begin{align*}
    \dot{\cov}(t) = -\mathcal{A}(\cov(t)), \qquad \cov(0)=\cov_0,
\end{align*}
where the nonlinear operator $\mathcal{A}$ is defined as
\begin{align}\label{eq:operator_A}
    \mathcal{A}:\R^{n\times n}\to\R^{n\times n}, \quad
    \mathcal{A}(\cov) =\alpha \cov A^T \Gamma^{-1} A \cov.
\end{align}
We will show that the rescaled solutions $t(\cov(t)-\cov_\infty)$ converge to an eigenvector $\hat{\cov}$ of $\mathcal{A}$, meaning $\hat{\cov}=\mathcal{A}(\hat{\cov})$

Note that eigenvectors of $\mathcal{A}$, i.e., matrices with $\lambda\hat{\cov}=\mathcal{A}(\hat{\cov})$ give rise to self-similar solutions of \labelcref{eq:ODE_cov} (cf.~\cite{bungert2019asymptotic} for a general study).
Indeed if $\cov(0)=\hat{\cov}$ then $\cov(t)=a(t) \hat{\cov}$, where $a:[0,\infty)\to\R$ solves the initial value problem $a'(t)=-\alpha\lambda a(t)^2, \;a(0)=1$. 
Hence, in this case 
\begin{align}
    \cov(t) = \frac{1}{1+\alpha \lambda t} \hat{\cov}.
\end{align}
In the context of the ensemble Kalman inversion this means that if the covariance matrix of the initial ensemble is an eigenvector of $\mathcal{A}$, then the shape of the ensemble remains unchanged during the flow. 
Hence, \cref{thm:asymptotic_profile} means that the rescaled covariance matrix of the ensemble Kalman inversion approaches a matrix which is a self-similar solution to~\labelcref{eq:ODE_cov}.

\begin{theorem}[asymptotic profiles]\label{thm:asymptotic_profile}
Let $\cov(t)$ denote the solution of \labelcref{eq:ODE_cov} with initial condition $\cov_0$.
Then the limit
\begin{align}
    \hat{\cov} := \lim_{t\to\infty}t(\cov(t)-\cov_\infty)
\end{align}
exists, and satisfies $\mathcal{A}(\hat{\cov})=\hat{\cov}$. 
\end{theorem}
\begin{proof}
Plugging in the explicit representations $\cov(t)= S E(t) S^{-1} \cov_0$ and $\cov_\infty=S E_\infty S^{-1} \cov_0$, where $E(t)$ and $E_\infty$ are as in \cref{thm:cov_dynamics}, we obtain
\begin{align*}
    t(\cov(t)-\cov_\infty) 
    &= t S \left(
    E(t)-E_\infty
    \right)
    S^{-1} \cov_0
    \\
    &=
    S
    \diag\left(\frac{t}{1+\alpha t\mu_1},\dots,\frac{t}{1+\alpha t\mu_k},0,\dots,0\right)
    S^{-1}\cov_0 \\
    &\longrightarrow 
    S
    \underbrace{
    \diag\left(\frac{1}{\alpha\mu_1},\dots,\frac{1}{\alpha\mu_k},0,\dots,0\right)
    }_{\hat{D}}
    S^{-1}\cov_0, \quad t\to\infty.
\end{align*}
Hence, we can define the matrix $\hat{\cov}:=S \hat{D}S^{-1}\cov_0$ and observe that
\begin{align*}
    \mathcal{A}(\hat{\cov}) &= \alpha \hat{\cov}A^T\Gamma^{-1}A\hat{\cov} 
    = \alpha S\hat{D}S^{-1}\cov_0 A^T\Gamma^{-1} A S\hat{D}S^{-1} \cov_0 \\
    &= \alpha S\hat{D}S^{-1} S D S^{-1} S \hat{D}S^{-1}\cov_0
    = \alpha S\hat{D} D \hat{D}S^{-1}\cov_0 
    = S\hat{D}S^{-1}\cov_0 
    = \hat{\cov}.
\end{align*}
Here we used that $\alpha \hat{D}$ is the pseudo-inverse matrix of $D$.
\end{proof}

\begin{example}
Let $A=(0,1)$, $\Gamma=E$, and $\cov_0=\begin{pmatrix} a&b \\ b&d\end{pmatrix}$. 
Then if $a\neq 0$ one can compute that 
\begin{align*}
    \cov(t) &= \frac{1}{1+\alpha t d}
    \begin{pmatrix}
    a+\alpha t\det\cov_0 & b \\
    b & d
    \end{pmatrix}, \\
    \cov_\infty &= 
    \begin{pmatrix}
    \tfrac{\det\cov_0}{d} & 0 \\
    0 & 0
    \end{pmatrix}, \\
    \hat{\cov} &= 
    \frac1\alpha
    \begin{pmatrix}
    \tfrac{b^2}{d^2} & \tfrac{b}{d} \\
    \tfrac{b}{d} & 1
    \end{pmatrix}.
\end{align*}
Without loss of generality we can assume $d=1$ which yields $\hat{\cov}=\frac1\alpha\begin{pmatrix}b^2&b\\b& 1\end{pmatrix}.$
This matrix has the eigenvectors $(1,-b)^T$ and $(b,1)^T$ with eigenvalues $0$ and $(b^2+1)/\alpha$, respectively.
This means that the ensemble approaches its limit in a tilted way if $b\neq 0$ and parallely if $b=0$.
We illustrate this in \cref{fig:profile}, where the first row shows the evolution of EKI with an initial ensemble aligned to the subspace of solutions (depicted in red), and the second row shows the evolution of a rotated ensemble.
The eigenvectors of the respective asymptotic profiles are depicted in the rightmost plots.
Here the pink eigenvectors, corresponding to the non-zero eigenvalue of $\hat{\cov}$, show the tilting with which the ensembles hits their limiting configurations.
\end{example}

\begin{figure}[tb]
    \def\Width{0.31\textwidth}
    \centering
    \fbox{\includegraphics[width=\Width,trim=8cm 6.5cm 8cm 4cm,clip]{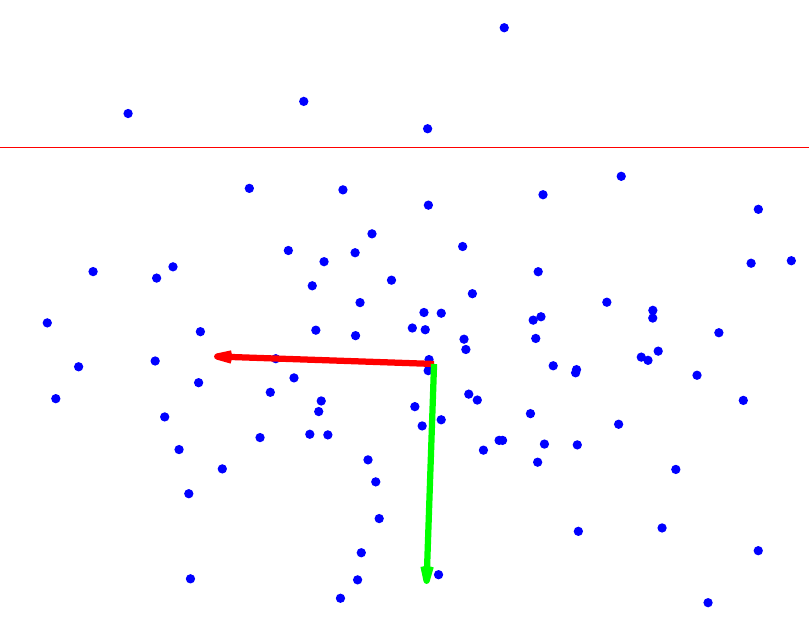}}\hfill%
    \fbox{\includegraphics[width=\Width,trim=8cm 6.5cm 8cm 4cm,clip]{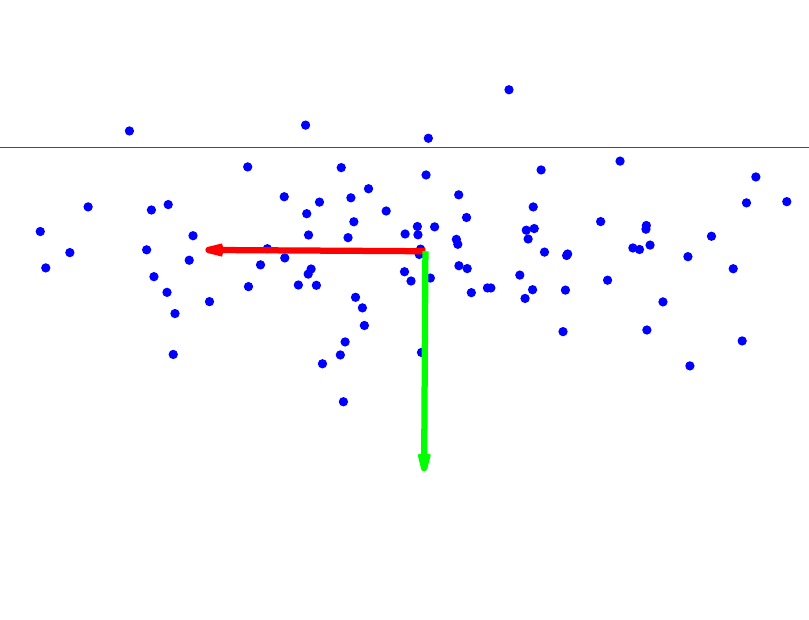}}\hfill%
    \fbox{\includegraphics[width=\Width,trim=8cm 6.5cm 8cm 4cm,clip]{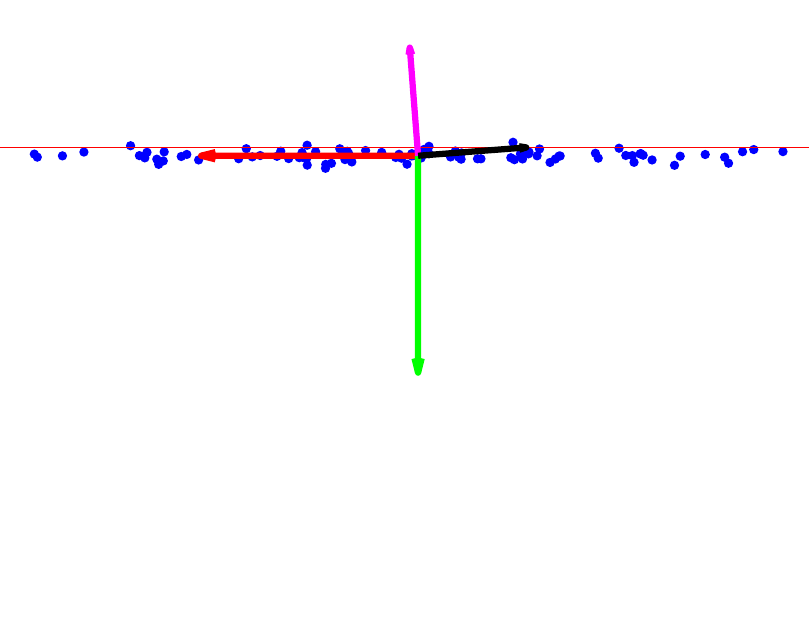}}\\%
    \vspace*{0.01\textwidth}
    \fbox{\includegraphics[width=\Width,trim=8cm 6.5cm 8cm 4cm,clip]{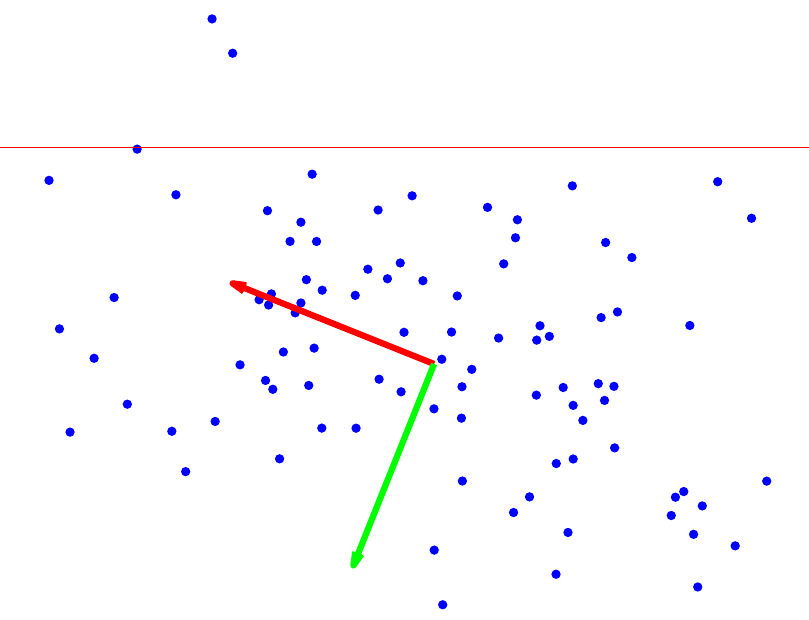}}\hfill%
    \fbox{\includegraphics[width=\Width,trim=8cm 6.5cm 8cm 4cm,clip]{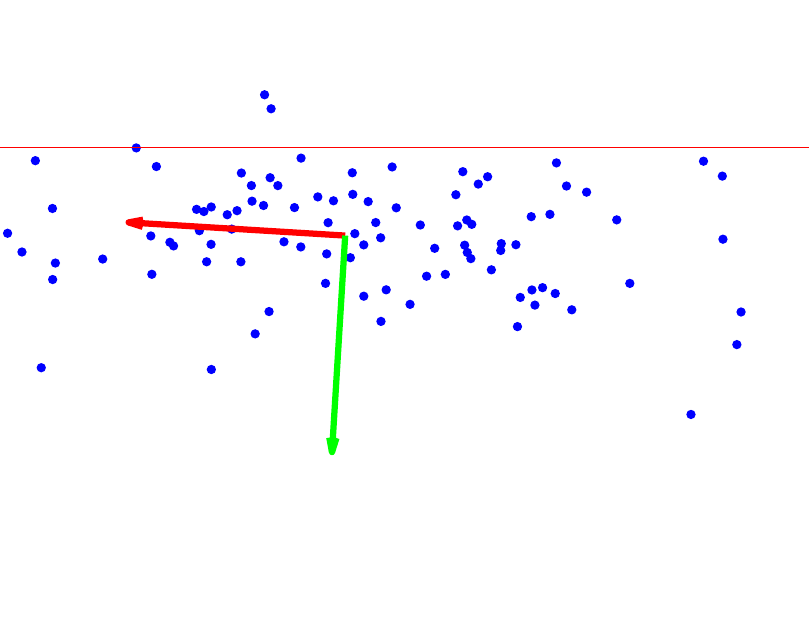}}\hfill%
    \fbox{\includegraphics[width=\Width,trim=8cm 6.5cm 8cm 4cm,clip]{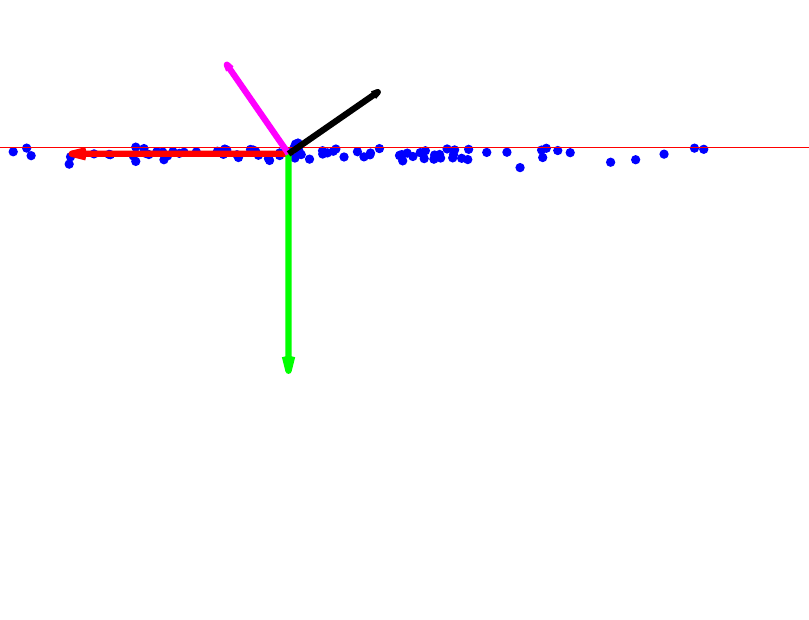}}%
    \caption{Asymptotic Profiles of ensemble Kalman inversion. From left to right: initial condition, intermediate time step, converged state. \textbf{Top row:} symmetric prior. \textbf{Bottom row:} asymmetric prior. Red and green arrows indicate eigenvectors of the {sample} covariance matrix. Magenta and black arrows indicate eigenvectors of the asymptotic profile.}
    \label{fig:profile}
\end{figure}

\section{Particle dynamics in the noisy data case}\label{sec:particledynamics}
We start again by recalling that the system we are analysing {describes} the particle dynamics of the EKI:
\begin{align*} 
     \dot{u}^j(t) = - C(t) A^T\Gamma^{-1}(Au^j(t)-y) +  C(t)A^T\Gamma^{-1}\sqrt{\Sigma}\dot{\wiener}^j(t).
\end{align*}
The evolution of particle mean and covariance (in the various senses described above) can be uniformly described by \labelcref{eq:ODE_cov,eq:ODE_mean} which we repeat for convenience:
\begin{alignat*}{2}
    \dot{\cov}(t) &= -\alpha\cov(t) A^T \Gamma^{-1} A \cov(t), \quad &&\cov(0) = \cov_0,\\
    \dot{\mean}(t) &= -\cov(t) A^T\Gamma^{-1}(A\mean(t)-y), \quad &&\mean(0) = \mean_0.
\end{alignat*}
With the complete analysis of the covariance dynamics from \cref{sec:spec_precond} at hand we can now analyse, inter alia, {the {naive} EKI dynamics of particles and their {sample} mean $m(t)$, as well as the mean-field means $\mathfrak m(t)$}. 
Furthermore, we will study the long-time behaviour of these quantities and apply our findings to ensemble and residual spreads.

We structure the main results of this section, the proofs of which follow directly from the statements in \cref{sec:char_ode,sec:asymp_rates}, into two theorems. 
The first one deals with {naive} EKI and the second one with mean-field EKI.

Before stating the theorems, we recall the definition of the pseudo-inverse of a diagonalizable matrix $M = U\Lambda U^{-1}$ as $M^+ = U\Lambda^+U^{-1}$ where $\diag(a_1\ldots,a_k,0,\ldots,0)^+ = \diag(a_1^{-1},\ldots,a_k^{-1},0,\ldots,0)$.
This allows us to define a pseudo-inverse of the diagonalizable matrix $\cov_0A^T\Gamma^{-1}A=SDS^{-1}$ as $(\cov_0A^T\Gamma^{-1}A)^+:=SD^+S^{-1}$.
For the matrix $A^T\Gamma^{-1}A$ we will therefore define $(A^T\Gamma^{-1}A)^-:=(\cov_0A^T\Gamma^{-1}A)^+\cov_0$ and remark that this is \emph{not} the Moore-Penrose pseudo-inverse since it does not fulfill a hermiticity condition. 

The Moore-Penrose pseudo-inverse $M^+$ of a matrix $M$ has to fulfill the following:
\begin{align*}
    M^+MM^+ = M^+,\quad
    MM^+M = M,\quad
    (MM^+)^T = MM^+,\quad
    (M^+M)^T = M^+M.
\end{align*}
The matrix $(A^T\Gamma^{-1}A)^-$ satisfies the first two, as can be checked by elementary computation, but not the latter two conditions, since for example
\begin{align*}
(A^T\Gamma^{-1}A)^-(A^T\Gamma^{-1}A) &= (SD^+S^{-1})\cov_0(\cov_0^{-1}SDS^{-1})
=SD^+DS^{-1}
\end{align*}
and this matrix is not symmetric in general (unless $S$ is orthogonal or $A$ is one-to-one).
\begin{theorem}[{naive} EKI]\label{thm:deterministic_EKI}
Let {$y = y^\dagger + \eps$ with $y^\dagger\in\ran(A)$} and $\{u^j\}_{j=1}^J$ solve the {naive} EKI dynamics \labelcref{eq:enkf_stoch} with $\Sigma=0$.
Then, {letting $u\in\R^n$ be an arbitrary element satisfying $Au=y^\dagger$,} the particles $u^j$ and their {sample} mean $m(t)$ satisfy:
\begin{align*}
    u^j(t) &= u^j_0 + \left(E-\sqrt{C(t)C_0^{-1}}\right)({u} - u^j_0) + \\
    &\qquad\sqrt{C(t)C_0^{-1}} \int_0^t \sqrt{C(s)C_0^{-1}}\d s\,C_0A^T\Gamma^{-1}\eps,\\
    m(t) &= m_0 + \left(E-\sqrt{C(t)C_0^{-1}}\right)({u} - m_0) + \\
    &\qquad\sqrt{C(t)C_0^{-1}} \int_0^t \sqrt{C(s)C_0^{-1}}\d s\,C_0A^T\Gamma^{-1}\eps,\\
    \lim_{t\to\infty} u^j(t) &= u^{j,\dagger} + (A^T\Gamma^{-1}A)^-(A^T\Gamma^{-1}\eps),\\
    \lim_{t\to\infty} m(t) &= m^{\dagger} + (A^T\Gamma^{-1}A)^-(A^T\Gamma^{-1}\eps),
\end{align*}
where $u^{j,\dagger} = u^j_0 + \left(E-{C_\infty C_0^{-1}}\right)({u} - u^j_0)$ and $m^{\dagger} = m_0 + \left(E-{C_\infty C_0^{-1}}\right)({u} - m_0)$  are equivalently characterized by 
\begin{align*}
u^{j,\dagger} &= \argmin\left\lbrace\|{u-u^j_0}\|_{C_0} \st u\in \R^n,\,Au = {y^\dagger}\right\rbrace, \\
m^{\dagger} &= \argmin\left\lbrace\norm{m-m_0}_{C_0}\st m\in\R^n,\,Am = {y^\dagger}\right\rbrace.
\end{align*}
Alternatively, we can write 
\begin{align*}
    \lim_{t\to\infty}u^j(t) &=  \argmin\left\lbrace\|{u-u^j_0}\|_{C_0} \st u\in \R^n,\,Au = \Pi_{\ran(A)}^\Gamma(y)\right\rbrace, \\
    \lim_{t\to\infty}m(t) &=  \argmin\left\lbrace\|{m-m_0}\|_{C_0} \st u\in \R^n,\,Am = \Pi_{\ran(A)}^\Gamma(y)\right\rbrace,
\end{align*}
where $\Pi_V^\Gamma$ is the $\Gamma$-orthogonal projection operator onto a closed subspace $V$.
The rate of convergence of all quantities involved is given by ${1}/{\sqrt{t}}$.
\end{theorem}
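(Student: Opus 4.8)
The plan is to reduce everything to the closed-form mean solution \labelcref{eq:sol_mean} (for $\alpha=2$) derived in \cref{sec:char_ode} and the spectral picture of \cref{thm:cov_dynamics}. Since the particle equation $\dot u^j = -C(t)A^T\Gamma^{-1}(Au^j-y)$ and the mean equation differ only in their initial value, formula \labelcref{eq:sol_mean} holds verbatim with $u^j_0$ replacing $m_0$, i.e.
\[
    u^j(t) = \sqrt{C(t)C_0^{-1}}\,u^j_0 + \sqrt{C(t)C_0^{-1}}\int_0^t\sqrt{C(s)C_0^{-1}}\d s\,C_0A^T\Gamma^{-1}y.
\]
First I would substitute $y=Au^\dagger+\eps$ and split the inhomogeneity into an $Au^\dagger$-part and an $\eps$-part. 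For the $Au^\dagger$-part the operator $\sqrt{C(t)C_0^{-1}}\int_0^t\sqrt{C(s)C_0^{-1}}\d s\,C_0A^T\Gamma^{-1}A$ is diagonalized by the basis $S$ of $C_0A^T\Gamma^{-1}A=SDS^{-1}$ from \cref{thm:cov_dynamics}; a one-line evaluation of the scalar integrals $\int_0^t(1+2s\mu_i)^{-1/2}\d s$ collapses this matrix to $E-\sqrt{C(t)C_0^{-1}}$, yielding the claimed term $(E-\sqrt{C(t)C_0^{-1}})(u^\dagger-u^j_0)$. The identical computation gives the formula for $m(t)$.

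For the limits $t\to\infty$ I would use $\sqrt{C(t)C_0^{-1}}=S\sqrt{E(t)}S^{-1}\to SE_\infty S^{-1}=C_\infty C_0^{-1}$, which handles the first two terms and defines $u^{j,\dagger}$ and $m^\dagger$. The delicate point, which I expect to be the main obstacle, is the $\eps$-term $\sqrt{C(t)C_0^{-1}}\int_0^t\sqrt{C(s)C_0^{-1}}\d s\,C_0A^T\Gamma^{-1}\eps$: in the directions with $\mu_i>0$ the scalar integral converges to $1/\mu_i$, but in the kernel directions $\mu_i=0$ it \emph{grows like }$t$. This is resolved by observing that the $\mu_i=0$ eigenvectors span $\ker A$ (since $C_0$ is invertible, $\operatorname{rank}(C_0A^T\Gamma^{-1}A)=\operatorname{rank}A=k$); writing $S=C_0^{1/2}Q$ with $Q$ orthogonal, obtained by diagonalizing the symmetric matrix $C_0^{1/2}A^T\Gamma^{-1}AC_0^{1/2}$ as in \cref{lem:product_diagonalizable}, the corresponding left eigenvectors satisfy $q_i^TC_0^{1/2}A^T=0$ and hence annihilate $C_0A^T\Gamma^{-1}\eps$. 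Thus $C_0A^T\Gamma^{-1}\eps$ carries no component in the divergent directions and the limit equals $(A^T\Gamma^{-1}A)^-(A^T\Gamma^{-1}\eps)$.

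To identify $u^{j,\dagger}=u^j_0+(E-C_\infty C_0^{-1})(u^\dagger-u^j_0)$ with the constrained minimizer, I would note that $G:=E-C_\infty C_0^{-1}=S(E-E_\infty)S^{-1}$ is idempotent and projects onto $\operatorname{span}(w_1,\dots,w_k)$ along $\ker A=\operatorname{span}(w_{k+1},\dots,w_n)$. Consequently $u^{j,\dagger}-u^\dagger=(E-G)(u^j_0-u^\dagger)\in\ker A$, giving feasibility $Au^{j,\dagger}=Au^\dagger$, while $u^{j,\dagger}-u^j_0=G(u^\dagger-u^j_0)$ is $C_0^{-1}$-orthogonal to $\ker A$ because the columns of $S=C_0^{1/2}Q$ obey $\langle w_i,C_0^{-1}w_l\rangle=\langle q_i,q_l\rangle=\delta_{il}$. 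These are precisely the optimality conditions characterizing the $C_0$-norm projection of $u^j_0$ onto the affine set $\{u:Au=Au^\dagger\}$, and the same reasoning settles $m^\dagger$.

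Finally, for the equivalent formulation with constraint $Au=\Pi_{\ran(A)}^\Gamma(y)$, I would first show the limit is independent of the splitting $y=Au^\dagger+\eps$: substituting $\eps=y-Au^\dagger$ and using $SD^+S^{-1}C_0A^T\Gamma^{-1}A=G$ collapses the limit to $(E-G)u^j_0+(A^T\Gamma^{-1}A)^-(A^T\Gamma^{-1}y)$. Choosing the reference $u^\dagger$ to be a $\Gamma$-least squares solution, so that $A^T\Gamma^{-1}\eps=0$ and $Au^\dagger=\Pi_{\ran(A)}^\Gamma(y)$, makes the $\eps$-term vanish and turns the first characterization into the second. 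The rate $1/\sqrt t$ then follows by reading off the decay of every scalar entry: the surviving-direction errors are $(1+2t\mu_i)^{-1/2}$ and $\mu_i^{-1}(1+2t\mu_i)^{-1/2}$, both of order $1/\sqrt t$, whereas the kernel directions contribute no error at all.
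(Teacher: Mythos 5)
Your proposal is correct, and it follows the paper's overall route: the closed-form solution \labelcref{eq:sol_mean} with $\alpha=2$ (the paper's \cref{prop:char_ODE}), the splitting $y=Au^\dagger+\eps$ with diagonal evaluation of the $Au^\dagger$-integral, and passage to the limit via $\sqrt{C(t)C_0^{-1}}\to C_\infty C_0^{-1}$. You differ, however, in the mechanism used for two sub-steps, both times exploiting the orthogonal factorization $S=C_0^{1/2}Q$. First, for the potentially divergent kernel directions of the noise term, the paper does not argue entrywise as you do: it decomposes $\eps=\bar\eps+\eps^\bot$ with $\bar\eps\in\ran(A)$ and $\eps^\bot\in\ran(A)^{\bot,\Gamma}=\ker(A^T\Gamma^{-1})$ (\cref{lem:orthogonal_range}), writes $\bar\eps=A\preim$, and then reuses the same integral identity as for the $Au^\dagger$-part together with \cref{lem:intergral}. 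Your observation that the left eigenvectors $q_i^TC_0^{-1/2}$ belonging to $\mu_i=0$ annihilate $C_0A^T\Gamma^{-1}\eps$ is an equivalent but more self-contained version of this fact, valid for arbitrary $\eps$ without introducing the range decomposition. Second, for the variational characterization, the paper's \cref{prop:general_asymptotics} (Step 3) proves minimality by an algebraic Pythagoras-type expansion, $\norm{x^\dagger-x_0}_{\cov_0}^2=\norm{x-x_0}_{\cov_0}^2-\norm{\cov_\infty\cov_0^{-1}(x-x_0)}_{\cov_0}^2$, built on the identity $\cov_\infty=\cov_\infty\cov_0^{-1}\cov_\infty$, whereas you verify the projection optimality conditions directly: feasibility because $E-G=SE_\infty S^{-1}$ maps into $\ker A$, and $C_0^{-1}$-orthogonality of $\operatorname{span}(w_1,\dots,w_k)$ to $\ker A$ from $\langle w_i,C_0^{-1}w_l\rangle=\delta_{il}$. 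Your geometric argument is shorter and makes the oblique-projection structure of $G$ explicit, but it hinges on the specific choice $S=C_0^{1/2}Q$; the paper's computation is independent of how $S$ is chosen and yields the quantitative excess $\norm{\cov_\infty\cov_0^{-1}(x-x_0)}_{\cov_0}^2$ as a by-product. Your last step (showing the limit is independent of the splitting, then choosing $u^\dagger$ as a $\Gamma$-least-squares solution) matches the paper's Step 4, and your reading-off of the $1/\sqrt t$ rate is slightly more complete than the paper's \cref{prop:x_cvgc_rates}, which is stated only for $\eps=0$, since you also bound the noise contribution $\mu_i^{-1}(1+2t\mu_i)^{-1/2}$.
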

{
\begin{corollary}[consistency for vanishing noise]
\cref{thm:deterministic_EKI} in particular shows that as $\varepsilon\to 0$, i.e., the noise in the data vanishes, the particle positions and the sample mean converge to the minimal-prior solutions of the inverse problem.
For convergence rates of {naive} EKI in the vanishing noise limit we refer to the recent paper \cite{parzer2021convergence}.
There also a discrepancy principle for early-stopping the EKI is derived, see also \cite{iglesias2013ensemble}.
\end{corollary}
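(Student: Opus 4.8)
The plan is to derive this as an immediate consequence of the explicit $t\to\infty$ formulas already furnished by \cref{thm:deterministic_EKI}, exploiting that the entire dependence on the noise $\eps$ enters through a single term that is \emph{linear} in $\eps$. Concretely, \cref{thm:deterministic_EKI} gives
\[
\lim_{t\to\infty} u^j(t) = u^{j,\dagger} + (A^T\Gamma^{-1}A)^-(A^T\Gamma^{-1}\eps),
\qquad
\lim_{t\to\infty} m(t) = m^{\dagger} + (A^T\Gamma^{-1}A)^-(A^T\Gamma^{-1}\eps).
\]
I would begin by observing that $u^{j,\dagger}$ and $m^{\dagger}$ are \emph{independent} of $\eps$: they depend only on the initial ensemble data $u^j_0, m_0, \cov_0$ and on the clean observation $Au^\dagger$, as is manifest from their variational characterizations
\[
u^{j,\dagger} = \argmin\{\norm{u-u^j_0}_{\cov_0} \st Au = Au^\dagger\},
\qquad
m^{\dagger} = \argmin\{\norm{m-m_0}_{\cov_0} \st Am = Au^\dagger\}.
\]

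The key step is then to note that the remaining summand $(A^T\Gamma^{-1}A)^-(A^T\Gamma^{-1}\eps)$ is the image of $\eps$ under the \emph{fixed} linear operator $\eps\mapsto (A^T\Gamma^{-1}A)^-A^T\Gamma^{-1}\eps$, where $(A^T\Gamma^{-1}A)^- = (\cov_0 A^T\Gamma^{-1}A)^+\cov_0$ by the definition recorded before \cref{thm:deterministic_EKI}. Continuity of this linear map is immediate, so the correction tends to $0$ as $\eps\to 0$. Passing to the limit $\eps\to 0$ in the displayed identities therefore yields $\lim_{t\to\infty} u^j(t) \to u^{j,\dagger}$ and $\lim_{t\to\infty} m(t) \to m^{\dagger}$, which are exactly the minimal-prior solutions: in the noiseless limit the datum satisfies $y = Au^\dagger$, so the constraint $Au = Au^\dagger$ in the two variational problems coincides with the inverse problem $Au = y$, while the objective $\norm{\cdot - u^j_0}_{\cov_0}$ selects the solution closest to the prior mean in the prior-weighted norm.

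I do not expect a genuine obstacle here, since all quantities are available in closed form; the only point requiring care is the \emph{order} of the two limits. I would take the infinite-time limit first — using the explicit expressions from \cref{thm:deterministic_EKI}, which hold for each fixed $\eps$ — and only afterwards send $\eps\to 0$, so that no interchange of limits needs to be justified. For completeness I would also remark that $u^\dagger$, and hence the clean observation $Au^\dagger$, is held fixed throughout, so that the target minimal-prior solutions $u^{j,\dagger}, m^{\dagger}$ do not move as the noise is switched off; this is precisely what makes the convergence statement meaningful.
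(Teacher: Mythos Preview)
Your proposal is correct and matches the paper's approach: the corollary is stated in the paper without a separate proof, as an immediate consequence of the explicit $t\to\infty$ formulas in \cref{thm:deterministic_EKI}, and your argument simply spells out why the noise term $(A^T\Gamma^{-1}A)^-A^T\Gamma^{-1}\eps$ vanishes linearly as $\eps\to 0$ while the minimal-prior solutions $u^{j,\dagger}$ and $m^\dagger$ remain fixed.
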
}

\begin{theorem}[mean-field EKI]\label{thm:mean_field}
Let {$y = y^\dagger + \eps$ with $y^\dagger\in\ran(A)$} and $\rho(t,u)$ solve the mean-field dynamics of stochastic EKI.
Then, {letting $u\in\R^n$ be an arbitrary element satisfying $Au=y^\dagger$,} the mean-field mean $\mathfrak{m}(t)$ satisfies:
\begin{align*}
    \mathfrak m(t) &= \mathfrak m_0 + \left(E-{\mathfrak C(t)\mathbf C_0^{-1}}\right)({u} - \mathfrak m_0)
     + t\mathfrak C(t)A^T\Gamma^{-1}\eps,\\
    \lim_{t\to\infty} \mathfrak m(t) &= \mathfrak m^{\dagger} + (A^T\Gamma^{-1}A)^-(A^T\Gamma^{-1}\eps),
\end{align*}
where $\mathfrak m^{\dagger} = \mathfrak m_0 + \left(E-{\mathfrak C_\infty \mathfrak C_0^{-1}}\right)({u} - \mathfrak m_0)$ is equivalently characterized by 
\begin{align*}
\mathfrak m^{\dagger} &= \argmin\left\lbrace\norm{\mathfrak m-\mathfrak m_0}_{\mathfrak C_0}\st \mathfrak m\in\R^n,\,A\mathfrak m = {y^\dagger}\right\rbrace.
\end{align*}
Alternatively, we can write 
\begin{align*}
    \lim_{t\to\infty}\mathfrak{m}(t) &=  \argmin\left\lbrace\norm{\mathfrak{m}-\mathfrak{m}_0}_{\mathfrak C_0} \st \mathfrak{m}\in \R^n,\,A\mathfrak{m} = \Pi_{\ran(A)}^\Gamma(y)\right\rbrace,
\end{align*}
where $\Pi_V^\Gamma$ is the $\Gamma$-orthogonal projection operator onto a closed subspace $V$.
The rate of convergence of $\mathfrak m(t)$ to $\mathfrak m_\infty$ is given by ${1}/{t}$.
\end{theorem}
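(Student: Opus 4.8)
The plan is to start from the explicit mean-field solution recorded in \labelcref{eq:sol_mean_mean_field,eq:sol_cov_mean_field}, namely $\mathfrak m(t)=\mathfrak C(t)\mathfrak C_0^{-1}\mathfrak m_0+t\,\mathfrak C(t)A^T\Gamma^{-1}y$, which is the case $\alpha=1$ of \labelcref{eq:sol_mean} (so that the stand-ins specialize to $\cov=\mathfrak C$, $\cov_0=\mathfrak C_0$), and to massage it into the asserted form. Substituting $y=Au^\dagger+\eps$ splits the second summand into a signal part $t\,\mathfrak C(t)A^T\Gamma^{-1}Au^\dagger$ and a noise part $t\,\mathfrak C(t)A^T\Gamma^{-1}\eps$. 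The algebraic heart of the argument is the identity $t\,\mathfrak C(t)A^T\Gamma^{-1}A=E-\mathfrak C(t)\mathfrak C_0^{-1}$, which I read off from the spectral data of \cref{thm:cov_dynamics}: with $\mathfrak C_0A^T\Gamma^{-1}A=SDS^{-1}$ we have $\mathfrak C(t)\mathfrak C_0^{-1}=SE(t)S^{-1}$ and $t\,\mathfrak C(t)A^T\Gamma^{-1}A=S\,\diag\big(\tfrac{t\mu_i}{1+t\mu_i}\big)_{i=1}^n S^{-1}=S(E-E(t))S^{-1}$. Feeding this into the signal part turns it into $(E-\mathfrak C(t)\mathfrak C_0^{-1})u^\dagger$, and collecting the $\mathfrak m_0$- and $u^\dagger$-terms yields precisely $\mathfrak m(t)=\mathfrak m_0+(E-\mathfrak C(t)\mathfrak C_0^{-1})(u^\dagger-\mathfrak m_0)+t\,\mathfrak C(t)A^T\Gamma^{-1}\eps$.

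For the limit $t\to\infty$ I treat the two pieces separately. The drift part converges since $\mathfrak C(t)\mathfrak C_0^{-1}=SE(t)S^{-1}\to SE_\infty S^{-1}=\mathfrak C_\infty\mathfrak C_0^{-1}$, producing the stated $\mathfrak m^\dagger$. For the noise part I use that $\mathfrak C_\infty$ is symmetric with $A\mathfrak C_\infty=0$ (\cref{thm:cov_dynamics}), hence $\mathfrak C_\infty A^T\Gamma^{-1}=0$ and therefore $t\,\mathfrak C(t)A^T\Gamma^{-1}=t(\mathfrak C(t)-\mathfrak C_\infty)A^T\Gamma^{-1}$. By \cref{thm:asymptotic_profile} the rescaled difference $t(\mathfrak C(t)-\mathfrak C_\infty)$ tends to the asymptotic profile $\hat{\cov}=S\hat DS^{-1}\mathfrak C_0$, which for $\alpha=1$ equals $SD^+S^{-1}\mathfrak C_0=(A^T\Gamma^{-1}A)^-$; thus the noise part converges to $(A^T\Gamma^{-1}A)^-A^T\Gamma^{-1}\eps$, as claimed.

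It remains to identify both limits with the two constrained minimizations. Each is a strictly convex problem over an affine subspace, hence has a unique solution characterized by feasibility together with the stationarity condition $\mathfrak C_0^{-1}(\mathfrak m-\mathfrak m_0)\in\ran(A^T)=(\ker A)^\perp$. For the first characterization I verify $A\mathfrak m^\dagger=Au^\dagger$ (using $A\mathfrak C_\infty=0$) and that $\mathfrak C_0^{-1}(\mathfrak m^\dagger-\mathfrak m_0)\in\ran(A^T)$; the latter becomes transparent in the simultaneous-diagonalization coordinates $S=\mathfrak C_0^{1/2}Q$, $D=\Lambda$ furnished by the proof of \cref{lem:product_diagonalizable}, where $\mathfrak C_0^{1/2}A^T\Gamma^{-1}A\mathfrak C_0^{1/2}=Q\Lambda Q^T$, the projector $E-\mathfrak C_\infty\mathfrak C_0^{-1}$ is conjugate via $\mathfrak C_0^{1/2}$ to $Q(E-E_\infty)Q^T$, and $\operatorname{span}(q_1,\dots,q_k)=\mathfrak C_0^{1/2}\ran(A^T)$ while $\operatorname{span}(q_{k+1},\dots,q_n)=\mathfrak C_0^{-1/2}\ker A$. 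The second characterization additionally requires $A(A^T\Gamma^{-1}A)^-A^T\Gamma^{-1}=\Pi_{\ran(A)}^\Gamma$, which converts its constraint into the feasibility statement $A\mathfrak m_\infty=\Pi_{\ran(A)}^\Gamma(y)$.

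I expect the identification of $A(A^T\Gamma^{-1}A)^-A^T\Gamma^{-1}$ with the $\Gamma$-orthogonal projector onto $\ran(A)$ to be the main obstacle, precisely because $(A^T\Gamma^{-1}A)^-$ is \emph{not} the Moore--Penrose inverse, so one's first instinct (reinforced by the remark preceding the theorem) is that hermiticity fails. The resolution is that in the coordinates $S=\mathfrak C_0^{1/2}Q$ one computes $(A^T\Gamma^{-1}A)^-=SD^+S^{-1}\mathfrak C_0=\mathfrak C_0^{1/2}QD^+Q^T\mathfrak C_0^{1/2}$, which \emph{is} symmetric even though its product with $A^T\Gamma^{-1}A$ is not; this symmetry is exactly what forces $\Pi:=A(A^T\Gamma^{-1}A)^-A^T\Gamma^{-1}$ to satisfy $\Gamma^{-1}\Pi=\Pi^T\Gamma^{-1}$, i.e.\ $\Gamma$-self-adjointness. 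Combined with $\ran\Pi\subseteq\ran A$ and $\Pi A=A$ (the latter because the columns of $S$ past the $k$-th span $\ker(\mathfrak C_0A^T\Gamma^{-1}A)=\ker A$ and are thus annihilated by $A$), this pins $\Pi$ down as $\Pi_{\ran(A)}^\Gamma$. Finally, the $1/t$ rate is immediate from $\tfrac{1}{1+t\mu_i}=O(1/t)$ and $t\cdot\tfrac{1}{1+t\mu_i}-\tfrac{1}{\mu_i}=-\tfrac{1}{\mu_i(1+t\mu_i)}=O(1/t)$, so that $\mathfrak C(t)-\mathfrak C_\infty$ and $t(\mathfrak C(t)-\mathfrak C_\infty)-\hat{\cov}$ both decay like $1/t$, and hence so does $\mathfrak m(t)-\mathfrak m_\infty$.
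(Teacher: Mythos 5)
Your proposal is correct, but it routes around the paper's own argument at three points, and the comparison is instructive. The paper proves this theorem as the $\alpha=1$ instance of a unified pipeline: \cref{prop:char_ODE} derives the solution formula via the integrating factor $L(t)=(\cov(t)\cov_0^{-1})^{-1/\alpha}$, the $u^\dagger$-integral is evaluated exactly (your identity $t\,\mathfrak C(t)A^T\Gamma^{-1}A=E-\mathfrak C(t)\mathfrak C_0^{-1}$ is precisely that computation at $\alpha=1$), and then \cref{prop:general_asymptotics} handles the limit by splitting $\eps=\bar\eps+\eps^\bot$ with $\bar\eps=A\preim\in\ran(A)$ (justified by \cref{lem:orthogonal_range}) and converting $(E-\cov_\infty\cov_0^{-1})\preim$ into $(A^T\Gamma^{-1}A)^-A^T\Gamma^{-1}\eps$ via $(A^T\Gamma^{-1}A)^-A^T\Gamma^{-1}A=E-\cov_\infty\cov_0^{-1}$. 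You instead obtain the noise limit by writing $t\,\mathfrak C(t)A^T\Gamma^{-1}=t(\mathfrak C(t)-\mathfrak C_\infty)A^T\Gamma^{-1}$ (using $\mathfrak C_\infty A^T\Gamma^{-1}=0$ from \cref{thm:cov_dynamics}) and invoking \cref{thm:asymptotic_profile} together with the observation $\hat\cov=(A^T\Gamma^{-1}A)^-$ at $\alpha=1$ --- a genuinely different and rather elegant link between the asymptotic-profile result and the mean dynamics, which avoids decomposing $\eps$ and, as a bonus, delivers the $1/t$ rate \emph{including the noise term}, whereas the paper's rate statement (\cref{prop:x_cvgc_rates}) is only formulated for $\eps=0$. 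For the variational characterization, the paper argues by a direct Pythagorean computation using $\cov_\infty=\cov_\infty\cov_0^{-1}\cov_\infty$ (Step 3 of \cref{prop:general_asymptotics}), while you verify the normal equations $\mathfrak C_0^{-1}(\mathfrak m^\dagger-\mathfrak m_0)\in\ran(A^T)$ in the symmetrized coordinates $S=\mathfrak C_0^{1/2}Q$; both are sound, and your coordinates make the projector structure transparent. Finally, for the alternative formulation the paper simply rewrites $y=\Pi_{\ran(A)}^\Gamma(y)+\eps^\bot$ and reuses the earlier steps (Step 4), never needing the identity $A(A^T\Gamma^{-1}A)^-A^T\Gamma^{-1}=\Pi_{\ran(A)}^\Gamma$ that you prove; your observation that $(A^T\Gamma^{-1}A)^-=\mathfrak C_0^{1/2}QD^+Q^T\mathfrak C_0^{1/2}$ is symmetric despite not being Moore--Penrose is correct and is a nice addition not stated in the paper, though the paper's rewriting trick is shorter. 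The trade-off overall: the paper's route is uniform in $\alpha$ and simultaneously proves \cref{thm:deterministic_EKI}, while your specialization to $\alpha=1$ buys shorter algebra (no integral), a tighter rate statement in the noisy regime, and an explicit $\Gamma$-orthogonal projector formula.
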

\begin{remark}
It is not obvious but can be verified with a short calculation that formula for $\mathfrak m(t)$ constructed in this theorem indeed coincides with \labelcref{eq:sol_mean_mean_field} from \cite{garbuno2020interacting} for $\eps=0$.
In particular, it coincides with the posterior mean for $t=1$.
\end{remark}
\begin{remark}[asymptotic behavior]
\cref{thm:deterministic_EKI,thm:mean_field} show that the asymptotic behavior of all the different notions of mean as $t\to\infty$ coincide. 
Hence, in the case of vanishing noise ($\eps\approx 0$), where on would like to let the evolution proceed to large times $t$, both {naive} and mean-field EKI can be used comparably, {albeit mean-field EKI exhibits a significantly better convergence rate}.
\end{remark}

\subsection{Fundamental dynamical properties}\label{sec:char_ode}

{We can treat all equations arising in \cref{thm:deterministic_EKI,thm:mean_field}} in a uniform way by considering the ordinary differential equations
\begin{alignat}{2}
    \dot{\cov}(t) &= -\alpha\cov(t) A^T \Gamma^{-1} A \cov(t), \quad &&\cov(0) = \cov_0,\\
    \label{eq:ODE_general}
    \dot{x}(t) &= -\cov(t)A^T\Gamma^{-1}(Ax(t)-y),\quad &&x(0)=x_0.
\end{alignat}

In the following proposition we derive the solution of this ordinary differential equation and its asymptotic behavior. 
{For this we utilize the diagonalization of $\cov(t)$, which is provided by \cref{thm:cov_dynamics}.}

\begin{proposition}\label{prop:char_ODE}
The solution of \labelcref{eq:ODE_general} is given by
\begin{align}\label{eq:ODE_general_sol_no_source}
    x(t) = \left(\cov(t)\cov_0^{-1}\right)^\frac{1}{\alpha} x_0 + \left(\cov(t)\cov_0^{-1}\right)^\frac{1}{\alpha} \int_0^t \left(\cov(s)\cov_0^{-1}\right)^{1-\frac{1}{\alpha}}\d s\, \cov_0 A^T\Gamma^{-1} y.
\end{align}
\end{proposition}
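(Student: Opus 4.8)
The key observation is that by \cref{thm:cov_dynamics} the matrix $\cov(t)$ is built from the \emph{time-independent} diagonalizing basis $S$ of $\cov_0 A^T\Gamma^{-1}A = SDS^{-1}$, via $\cov(t)\cov_0^{-1} = SE(t)S^{-1}$ with $E(t)=\diag\bigl(\tfrac{1}{1+\alpha t\mu_i}\bigr)$. This makes every power $(\cov(t)\cov_0^{-1})^p = SE(t)^pS^{-1}$ well-defined and, crucially, all these matrices commute with one another and with $\cov_0 A^T\Gamma^{-1}A$. So the whole problem decouples in the $S$-basis into scalar ODEs. My plan is to verify the proposed formula by differentiation rather than by re-deriving it from scratch: I would plug the claimed $x(t)$ into \labelcref{eq:ODE_general} and check both the ODE and the initial condition, using the algebra of the $SE(t)S^{-1}$ family.

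**Key steps.**
First I would record the two facts I need about the homogeneous factor $\Phi(t):=(\cov(t)\cov_0^{-1})^{1/\alpha} = SE(t)^{1/\alpha}S^{-1}$. Differentiating the scalar entries $(1+\alpha t\mu_i)^{-1/\alpha}$ gives $\tfrac{\d}{\d t}(1+\alpha t\mu_i)^{-1/\alpha} = -\mu_i (1+\alpha t\mu_i)^{-1/\alpha-1}$, so in matrix form
\begin{align*}
    \dot\Phi(t) = -S\,\diag\!\left(\frac{\mu_i}{1+\alpha t\mu_i}\right)E(t)^{1/\alpha}S^{-1} = -\cov(t)A^T\Gamma^{-1}A\,\Phi(t),
\end{align*}
where the last equality uses $\cov(t)A^T\Gamma^{-1}A = SD(t)S^{-1}$ from \cref{thm:cov_dynamics} and the fact that $D(t)$ and $E(t)^{1/\alpha}$ are both diagonal, hence commute. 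This shows $\Phi$ is exactly the fundamental solution of the homogeneous part $\dot x = -\cov(t)A^T\Gamma^{-1}A\,x$ with $\Phi(0)=E$. Second, I would treat the claimed formula as a variation-of-constants ansatz $x(t)=\Phi(t)\bigl(x_0 + \int_0^t \Psi(s)\d s\bigr)$ with $\Psi(s) := (\cov(s)\cov_0^{-1})^{1-1/\alpha}\cov_0A^T\Gamma^{-1}y$, differentiate using the product rule, and substitute $\dot\Phi = -\cov(t)A^T\Gamma^{-1}A\,\Phi$. The homogeneous piece reproduces $-\cov(t)A^T\Gamma^{-1}A\,x(t)$, and the inhomogeneous piece is $\Phi(t)\Psi(t)$. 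It then remains to check this equals the source $\cov(t)A^T\Gamma^{-1}y$ from \labelcref{eq:ODE_general}.

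**The one identity that does the work.**
The crux is the commuting-power identity $\Phi(t)(\cov(t)\cov_0^{-1})^{1-1/\alpha} = \cov(t)\cov_0^{-1}$, i.e.
\begin{align*}
    SE(t)^{1/\alpha}S^{-1}\cdot SE(t)^{1-1/\alpha}S^{-1} = SE(t)S^{-1} = \cov(t)\cov_0^{-1},
\end{align*}
which is immediate from $E(t)^{1/\alpha}E(t)^{1-1/\alpha}=E(t)$ since the factors are simultaneously diagonalized. Multiplying on the right by $\cov_0A^T\Gamma^{-1}y$ gives $\Phi(t)\Psi(t) = \cov(t)\cov_0^{-1}\cov_0 A^T\Gamma^{-1}y = \cov(t)A^T\Gamma^{-1}y$, which is exactly the required source term. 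Finally, $x(0)=\Phi(0)x_0 = Ex_0 = x_0$ verifies the initial condition.

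**Main obstacle.**
The only genuinely delicate point is the well-definedness and smooth dependence of the fractional powers $(\cov(t)\cov_0^{-1})^p$. Since $\cov(t)\cov_0^{-1}=SE(t)S^{-1}$ need not be symmetric (the basis $S$ from \cref{lem:product_diagonalizable} is generally non-orthogonal), one must define these powers through the diagonalization rather than via a symmetric functional calculus, and I would note explicitly that $E(t)$ has strictly positive diagonal entries for all finite $t\ge 0$ so that all real powers are genuine and the zero eigenvalues of $D$ cause no division issues. Once the powers are read off entrywise in the $S$-basis, every manipulation above is diagonal scalar algebra and the verification goes through cleanly; the apparent matrix non-commutativity never bites because all matrices in sight share the eigenbasis $S$.
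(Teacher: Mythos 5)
Your proof is correct and essentially mirrors the paper's: where the paper multiplies by the integrating factor $L(t)=\left(\cov(t)\cov_0^{-1}\right)^{-\frac{1}{\alpha}}$ and derives the formula from $\frac{\de}{\de t}\left[L(t)x(t)\right]$, you verify the same formula by differentiating the fundamental solution $\Phi(t)=L(t)^{-1}$, with both arguments resting on the identical diagonalization $\cov(t)\cov_0^{-1}=SE(t)S^{-1}$ from \cref{thm:cov_dynamics} and the same diagonal power algebra in the $S$-basis. The only point left implicit in your verification is uniqueness for the linear ODE with continuous coefficients, which is needed to upgrade ``a solution'' to ``the solution''---a standard fact, and one the paper's integrating-factor derivation obtains directly.
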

\begin{proof}
We define $L(t) := \left(\cov(t)\cov_0^{-1}\right)^{-\frac{1}{\alpha}}=S E(t)^{-\frac{1}{\alpha}}S^{-1}$.
Using the definition of the matrices $E(t)$, $D$, and $D(t)$ as in \cref{thm:cov_dynamics}, we can compute
\begin{align*}
\dot{L}(t) &= S\diag\left((1+\alpha t\mu_i)^{\frac{1}{\alpha}-1}\mu_i\right)_{i=1}^nS^{-1}=SE(t)^{-\frac{1}{\alpha}}E(t)DS^{-1}\\
&=S E(t)^{-\frac{1}{\alpha}}D(t)S^{-1}\\
&=: S M_\alpha(t)S^{-1}.
\end{align*}
The product $L(t)x(t)$ then satisfies
\begin{align*}
    &\phantom{=}\frac{\de}{\de t}\left[L(t)x(t)\right]\\
    &=\dot{L}(t)x(t)+L(t)\dot{x}(t) \\
    &= S M_\alpha(t) S^{-1} x(t) - S E(t)^{-\frac{1}{\alpha}} S^{-1} \cov(t) A^T\Gamma^{-1}(Ax(t)-y)\\
    &= S M_\alpha(t) S^{-1} x(t) - S E(t)^{-\frac{1}{\alpha}} S^{-1} S D(t) S^{-1}x(t) + SE(t)^{-\frac{1}{\alpha}}S^{-1}\cov(t)A^T\Gamma^{-1}y\\
    &= S M_\alpha(t) S^{-1} x(t) - S \underbrace{E(t)^{-\frac{1}{\alpha}}D(t)}_{=M_\alpha(t)} S^{-1}x(t) + SE(t)^{-\frac{1}{\alpha}}S^{-1}\cov(t)A^T\Gamma^{-1}y \\
    &= SE(t)^{-\frac{1}{\alpha}}S^{-1}\cov(t) A^T\Gamma^{-1}y\\
    &=  SE(t)^{-\frac{1}{\alpha}}S^{-1}S E(t) S^{-1} \cov_0 A^T\Gamma^{-1}y\\
    &= SE(t)^{1-\frac{1}{\alpha}}S^{-1} \cov_0 A^T\Gamma^{-1}y\\
    &= \left(\cov(t)\cov_0^{-1}\right)^{1-\frac{1}{\alpha}} \cov_0 A^T\Gamma^{-1}y.
\end{align*}
We can integrate this equation to the following one, which is equivalent to \labelcref{eq:ODE_general_sol_no_source}:
\begin{align*}
    L(t)x(t) = x_0 + \int_0^t \left(\cov(s)\cov_0^{-1}\right)^{1-\frac{1}{\alpha}}\d s\, \cov_0 A^T\Gamma^{-1} y.
\end{align*}
\end{proof}

{
Let us now assume that the measured data is given by $y={y^\dagger} + \eps$ {with $y^\dagger\in\ran(A)$}.
Then we can split the integral in \labelcref{eq:ODE_general_sol_no_source} into two parts, where the first one can be evaluated using that ${y^\dagger}$ is in the range of the forward operator. 
\begin{proposition}
Let $y={y^\dagger} + \eps$ {with $y^\dagger\in\ran(A)$}. 
Then, {letting $u\in\R^n$ be an arbitrary element satisfying $Au=y^\dagger$,} the solution of \labelcref{eq:ODE_general} is given by
\begin{align}\label{eq:ODE_general_sol_source_and_noise}
    \begin{split}
        x(t) &= x_0 + \left(E-\left({\cov(t)\cov_0^{-1}}\right)^\frac{1}{\alpha}\right)({u} - x_0) \\
        &\qquad + \left({\cov(t)\cov_0^{-1}}\right)^\frac{1}{\alpha} \int_0^t \left({\cov(s)\cov_0^{-1}}\right)^{1-\frac{1}{\alpha}}\d s\,\cov_0A^T\Gamma^{-1}\eps.
    \end{split}
\end{align}
\end{proposition}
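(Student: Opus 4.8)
The plan is to take the closed-form solution \labelcref{eq:ODE_general_sol_no_source} from \cref{prop:char_ODE} and insert $y = Au^\dagger + \eps$ directly into its source term. Writing $\cov_0A^T\Gamma^{-1}y = \cov_0A^T\Gamma^{-1}Au^\dagger + \cov_0A^T\Gamma^{-1}\eps$ splits the integral contribution into a signal part driven by $Au^\dagger$ and a noise part driven by $\eps$. The noise part already has exactly the form appearing in the last line of \labelcref{eq:ODE_general_sol_source_and_noise}, so it is simply carried over; all the actual work lies in evaluating the signal part in closed form.

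To do so I would invoke the diagonalization $\cov_0A^T\Gamma^{-1}A = SDS^{-1}$ from \cref{thm:cov_dynamics} together with $\cov(t)\cov_0^{-1} = SE(t)S^{-1}$, so that $\left(\cov(s)\cov_0^{-1}\right)^p = SE(s)^pS^{-1}$ for every power $p$. Substituting these into the signal term conjugates everything by $S$ and collapses the matrix products into a single diagonal factor, reducing the computation to the diagonal matrix $E(t)^{1/\alpha}\left(\int_0^t E(s)^{1-1/\alpha}\d s\right)D$. Since $E(s)$ and $D$ are diagonal this is handled entry by entry, and the crux is the scalar integral $\int_0^t(1+\alpha s\mu_i)^{-(1-1/\alpha)}\mu_i\d s = (1+\alpha t\mu_i)^{1/\alpha}-1$, which I would verify by the substitution $u = 1+\alpha s\mu_i$; the case $\mu_i = 0$ is trivial since both sides vanish.

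Multiplying by the prefactor $E(t)^{1/\alpha}$, whose $i$-th diagonal entry is $(1+\alpha t\mu_i)^{-1/\alpha}$, the entries telescope to $1 - (1+\alpha t\mu_i)^{-1/\alpha}$, which is precisely the $i$-th entry of $E - E(t)^{1/\alpha}$. Conjugating back by $S$ then identifies the signal contribution as $\left(E - \left(\cov(t)\cov_0^{-1}\right)^{1/\alpha}\right)u^\dagger$. Finally, combining this with the surviving term $\left(\cov(t)\cov_0^{-1}\right)^{1/\alpha}x_0$ from \labelcref{eq:ODE_general_sol_no_source} and adding and subtracting $x_0$ recasts the pair into the affine form $x_0 + \left(E - \left(\cov(t)\cov_0^{-1}\right)^{1/\alpha}\right)(u^\dagger - x_0)$ of \labelcref{eq:ODE_general_sol_source_and_noise}.

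The main obstacle is the exact evaluation of the signal integral: everything hinges on the fact that the factor $\mu_i$ coming from $D$ and the exponents $1/\alpha$ and $1-1/\alpha$ conspire so that the antiderivative is exactly $(1+\alpha t\mu_i)^{1/\alpha}$, producing the clean telescoping against the prefactor. Once this cancellation is observed, the $u^\dagger$-term becomes a purely spectral object $E - \left(\cov(t)\cov_0^{-1}\right)^{1/\alpha}$ that no longer references $A$ or $\Gamma$, and the remainder of the argument is elementary linear algebra.
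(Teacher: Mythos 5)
Your proposal is correct and follows essentially the same route as the paper: plugging $y=Au^\dagger+\eps$ into \labelcref{eq:ODE_general_sol_no_source}, carrying the $\eps$-part over unchanged, and evaluating the signal part via the diagonalization from \cref{thm:cov_dynamics}, the scalar integral $\int_0^t \mu_i(1+\alpha s\mu_i)^{1/\alpha-1}\d s=(1+\alpha t\mu_i)^{1/\alpha}-1$, and the telescoping against $E(t)^{1/\alpha}$. Your explicit treatment of the $\mu_i=0$ case is a minor point the paper leaves implicit, but otherwise the arguments coincide step for step, including the final regrouping into the affine form $x_0+\bigl(E-\bigl(\cov(t)\cov_0^{-1}\bigr)^{1/\alpha}\bigr)(u^\dagger-x_0)$.
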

\begin{proof}
From \cref{prop:char_ODE} we know that $x(t)$ has the expression \labelcref{eq:ODE_general_sol_no_source}.
Plugging in $y={y^\dagger}+\eps$ we can split the integral into two. 
The second one, featuring $\eps$, just remains as it is.
The first one can be computed as follows, {using $y^\dagger=Au$ for some $u\in\R^n$}:
\begin{align*}
    &\phantom{=}\left({\cov(t)\cov_0^{-1}}\right)^\frac{1}{\alpha}\int_0^t \left({\cov(s)\cov_0^{-1}}\right)^{1-\frac{1}{\alpha}} \d s\,\cov_0A^T\Gamma^{-1}{y^\dagger}\\
    &= SE(t)^\frac{1}{\alpha}\int_0^t\diag\left({\mu_i}({1+\alpha s\mu_i})^{\frac{1}{\alpha}-1}\right)_{i=1}^n\d s\, S^{-1}{u}\\
    &= SE(t)^\frac{1}{\alpha}\diag\left(({1+\alpha t\mu_i})^\frac{1}{\alpha}-1\right)_{i=1}^n S^{-1}{u}\\
    &= S\left(E-E(t)^\frac{1}{\alpha}\right)S^{-1}{u}\\
    &= \left(E-\left({\cov(t)\cov_0^{-1}}\right)^\frac{1}{\alpha}\right){u}.
\end{align*}
Hence, using this together with \labelcref{eq:ODE_general_sol_no_source} we obtain the desired expression:
\begin{align*}
    x(t) &= \left(\cov(t)\cov_0^{-1}\right)^\frac{1}{\alpha}x_0 + \left(E-\left({\cov(t)\cov_0^{-1}}\right)^\frac{1}{\alpha}\right){u}\\
    &\qquad+ \left(\cov(t)\cov_0^{-1}\right)^\frac{1}{\alpha} \int_0^t \left(\cov(s)\cov_0^{-1}\right)^{1-\frac{1}{\alpha}}\d s\, \cov_0 A^T\Gamma^{-1} \eps\\
    &=x_0 + \left(E-\left({\cov(t)\cov_0^{-1}}\right)^\frac{1}{\alpha}\right)({u}-x_0)\\
    &\qquad+ \left(\cov(t)\cov_0^{-1}\right)^\frac{1}{\alpha} \int_0^t \left(\cov(s)\cov_0^{-1}\right)^{1-\frac{1}{\alpha}}\d s\, \cov_0 A^T\Gamma^{-1} \eps.
\end{align*}
\end{proof}}
\subsection{Asymptotic behavior and convergence rates}\label{sec:asymp_rates}
If one tries to solve the integral in \labelcref{eq:ODE_general_sol_no_source} or \labelcref{eq:ODE_general_sol_source_and_noise} one obtains
\begin{align*}
    &\int_0^t\left({\cov(s)\cov_0^{-1}}\right)^{1-\frac{1}{\alpha}}\d s \\
    &= S\diag\left(\frac{({1+\alpha t\mu_1})^\frac{1}{\alpha}-1}{\mu_1},\dots,\frac{({1+\alpha t\mu_k})^\frac{1}{\alpha}-1}{\mu_k},t,\dots,t\right)S^{-1}
\end{align*}
and therefore
\begin{align*}
    \left({\cov(t)\cov_0^{-1}}\right)^\frac{1}{\alpha}&\int_0^t\left({\cov(s)\cov_0^{-1}}\right)^{1-\frac{1}{\alpha}}\d s \\ &=
    S\diag\left(\frac{1-\frac{1}{(1+\alpha t\mu_1)^{\frac{1}{\alpha}}}}{\mu_1},\dots,\frac{1-\frac{1}{({{1+\alpha t\mu_k})^{\frac{1}{\alpha}}}}}{\mu_k},t,\dots,t\right)S^{-1}.
\end{align*}
The diagonal matrix in this expression blows up as $t\to\infty$ unless it is multiplied with $D=\diag(\mu_1,\dots,\mu_k,0,\dots,0)$. 
Remember that $D$ occured as diagonalization of $\cov_0A^T\Gamma^{-1}A=SDS^{-1}$. 
This shows that the integral term has to be multiplied with $\cov_0A^T\Gamma^{-1}A$ in order to exhibit a well-defined asymptotic behavior as $t\to\infty$.
{
Since in \labelcref{eq:ODE_general_sol_no_source,eq:ODE_general_sol_source_and_noise} the integral is multiplied by $\cov_0A^T\Gamma^{-1}y$ (or $\cov_0A^T\Gamma^{-1}\eps$ respectively), this shows that the data $y$ or the noise $\eps$ has to lie in $\ran(A)$ for a well-defined asymptotic behavior.

The previous observations can be collected in the following lemma.
\begin{lemma}\label{lem:intergral}
It holds that
\begin{align*}
    \lim_{t\to\infty}\left({\cov(t)\cov_0^{-1}}\right)^\frac{1}{\alpha}&\int_0^t\left({\cov(s)\cov_0^{-1}}\right)^{1-\frac{1}{\alpha}}\d s\,\cov_0A^T\Gamma^{-1}A = S(E-E_\infty)S^{-1}=E-\cov_\infty\cov_0^{-1}.
\end{align*}
\end{lemma}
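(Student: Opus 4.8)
The plan is to build directly on the explicit diagonal representation of the product
$\left(\cov(t)\cov_0^{-1}\right)^{1/\alpha}\int_0^t\left(\cov(s)\cov_0^{-1}\right)^{1-1/\alpha}\d s$
that was computed in the display immediately preceding the lemma. Since $\cov_0A^T\Gamma^{-1}A = SDS^{-1}$ with $D=\diag(\mu_1,\dots,\mu_k,0,\dots,0)$ by \cref{thm:cov_dynamics}, I would right-multiply that representation by $SDS^{-1}$. The inner factor $S^{-1}S$ collapses, leaving a single diagonal matrix conjugated by $S$, so the whole product is again of the form $S(\text{diagonal})S^{-1}$.

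The decisive observation concerns the two blocks of that diagonal matrix. In the first $k$ slots the entry is $\bigl(1-(1+\alpha t\mu_i)^{-1/\alpha}\bigr)/\mu_i$; multiplying by the eigenvalue $\mu_i$ of $D$ cancels the $\mu_i$ in the denominator and yields simply $1-(1+\alpha t\mu_i)^{-1/\alpha}$. In the remaining $n-k$ slots the entry is $t$, which a priori diverges, but it is multiplied by the zero eigenvalues of $D$ and therefore vanishes identically for every $t$. This is the only place where anything could go wrong, and it is exactly the cancellation that the discussion preceding the lemma anticipates: the integral acquires a finite limit precisely because it is paired with $\cov_0A^T\Gamma^{-1}A$, whose kernel absorbs the divergent directions.

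After this simplification the product equals $S\,\diag\bigl(1-(1+\alpha t\mu_1)^{-1/\alpha},\dots,1-(1+\alpha t\mu_k)^{-1/\alpha},0,\dots,0\bigr)\,S^{-1}$, and passing to the limit is then elementary: for each $i\le k$ we have $\mu_i>0$, hence $(1+\alpha t\mu_i)^{-1/\alpha}\to 0$ and the $i$-th entry tends to $1$, giving the limiting diagonal matrix $\diag(1,\dots,1,0,\dots,0)$ with $k$ ones and $n-k$ zeros. Since $S$ and $S^{-1}$ are constant in $t$, the limit passes through the conjugation without any interchange argument.

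Finally I would identify this limit with the two claimed forms. Because $E$ is the identity and $E_\infty=\diag(0,\dots,0,1,\dots,1)$ has $k$ zeros followed by $n-k$ ones (as in \cref{thm:cov_dynamics}), the limiting diagonal matrix is exactly $E-E_\infty$, so the limit is $S(E-E_\infty)S^{-1}$. Distributing the conjugation and using $SES^{-1}=E$ together with $\cov_\infty\cov_0^{-1}=SE_\infty S^{-1}$, read off from $\cov_\infty=SE_\infty S^{-1}\cov_0$ in \cref{thm:cov_dynamics}, yields $S(E-E_\infty)S^{-1}=E-\cov_\infty\cov_0^{-1}$ and closes the chain of equalities. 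There is no genuine obstacle here; the entire argument is routine once one sees that the blow-up in the kernel directions is annihilated by $D$.
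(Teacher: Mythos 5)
Your proposal is correct and coincides with the paper's intended argument: the lemma is stated there as a direct consequence of the display computed immediately before it, and the proof is exactly your observation that right-multiplying by $\cov_0A^T\Gamma^{-1}A=SDS^{-1}$ collapses the conjugation, cancels the $\mu_i$ in the first $k$ diagonal slots, and annihilates the divergent entries $t$ via the zero eigenvalues of $D$, after which the entrywise limit gives $S(E-E_\infty)S^{-1}=E-\cov_\infty\cov_0^{-1}$ by $\cov_\infty\cov_0^{-1}=SE_\infty S^{-1}$ from \cref{thm:cov_dynamics}. There is nothing to add or correct.
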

Using this lemma we can study the asymptotic behavior of \labelcref{eq:ODE_general_sol_source_and_noise} under the assumption that $\eps\in\ran(A)$.
This can be assumed without loss of generality thanks to the following lemma, which assumes that the data is split into a range component and a component in the orthogonal complement 
\begin{align}
    \ran(A)^{\bot,\Gamma}:=\left\lbrace y \in \R^m\st \langle y, A u \rangle_\Gamma=0, \;\forall u\in\R^n \right\rbrace {=\ker(A^T\Gamma^{-1})}.
\end{align}
In our finite-dimensional observation setting this split is always possible since $\ran(A)$ is closed.

\begin{lemma}\label{lem:orthogonal_range}
Let $y= {y^\dagger} + y^\bot$ where ${y^\dagger} \in \ran(A)$ and $y^\bot \in \ran(A)^{\bot,\Gamma}$.
Then the ensemble Kalman inversion \labelcref{eq:enkf_stoch} with datum ${y^\dagger}$ coincides with the one for $y$.
\end{lemma}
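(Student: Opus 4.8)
The plan is to exploit the fact that the datum enters the dynamics \labelcref{eq:enkf_stoch} in an extremely restricted way. First I would expand the drift term, writing
\begin{align*}
    -C(t)A^T\Gamma^{-1}(Au^j(t)-y) = -C(t)A^T\Gamma^{-1}Au^j(t) + C(t)A^T\Gamma^{-1}y,
\end{align*}
and observe that the only place $y$ appears is in the single vector $A^T\Gamma^{-1}y$. The diffusion term $C(t)A^T\Gamma^{-1}\sqrt{\Sigma}\dot\wiener^j(t)$ does not involve $y$ at all, and the covariance $C(t)$ is a functional of the particles alone (its evolution equation \labelcref{eq:ODE_cov_emp} contains no $y$), so the entire coupled system for $\{u^j\}$ and the induced $C$ depends on the datum solely through $A^T\Gamma^{-1}y$.

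Next I would use the characterization already recorded in the definition of the orthogonal complement, namely $\ran(A)^{\bot,\Gamma} = \ker(A^T\Gamma^{-1})$. Since $y^\bot \in \ran(A)^{\bot,\Gamma}$, this gives $A^T\Gamma^{-1}y^\bot = 0$, and therefore
\begin{align*}
    A^T\Gamma^{-1}y = A^T\Gamma^{-1}(\bar y + y^\bot) = A^T\Gamma^{-1}\bar y.
\end{align*}
Consequently the drift coefficients of \labelcref{eq:enkf_stoch} for the datum $y$ and for the datum $\bar y$ are literally identical, while the diffusion coefficients agree trivially. Because both inversions are initialized with the same ensemble $\{u_0^j\}_{j=1}^J$, driven (in the stochastic case) by the same realizations of the Wiener processes $\wiener^j$, and governed by vector fields that coincide, pathwise uniqueness of solutions to the (locally Lipschitz) system forces the two trajectories to agree for all $t$. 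In particular the empirical mean and covariance, and every derived quantity, coincide.

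There is no serious obstacle here; the only point requiring a word of care is to note that $C(t)$ is \emph{the same} function of time in both runs, so that one is genuinely comparing two systems with identical coefficients rather than merely identical drift vectors evaluated along a single path. This follows because the covariance equation is decoupled from the datum, so the identity $A^T\Gamma^{-1}y = A^T\Gamma^{-1}\bar y$ propagates consistently through the coupled particle–covariance system and the uniqueness argument applies to the full state.
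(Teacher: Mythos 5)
Your proof is correct, and it differs from the paper's in a small but genuine way: the cancellation is established one step earlier. Both arguments split $y=\bar y+y^\bot$ and must show that $y^\bot$ contributes nothing to the drift; the paper keeps the term $\cov(t)A^T\Gamma^{-1}y^\bot$ and kills it by writing the empirical covariance as $\frac1J\sum_{j=1}^J(u^j(t)-\mean(t))\otimes(u^j(t)-\mean(t))$, so that applying it to $A^T\Gamma^{-1}y^\bot$ produces the inner products $\langle u^j(t)-\mean(t),A^T\Gamma^{-1}y^\bot\rangle=\langle A(u^j(t)-\mean(t)),y^\bot\rangle_\Gamma=0$, i.e., it pairs $y^\bot$ against elements of $\ran(A)$ via the $\Gamma$-orthogonality in the definition of $\ran(A)^{\bot,\Gamma}$. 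You instead annihilate the term before it ever meets the covariance, invoking the kernel characterization $\ran(A)^{\bot,\Gamma}=\ker(A^T\Gamma^{-1})$ (which the paper records alongside the definition) to get $A^T\Gamma^{-1}y^\bot=0$, so the coefficients of the two evolution equations are literally identical. Your variant buys two things: it does not use the finite-sample outer-product structure of $C(t)$, so the identical one-line argument applies verbatim when the covariance is replaced by the mean-field covariance $\mathfrak C(t)$ or the averaged covariance, where no rank-one decomposition is available; and once the coefficients coincide, the two ``systems'' are the same system, so your appeal to pathwise uniqueness, while harmless, is not actually needed --- the paper likewise concludes directly from the reduction of one equation to the other. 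A cosmetic remark: you justify that $C(t)$ carries no $y$-dependence via its evolution equation \labelcref{eq:ODE_cov_emp}, which is the deterministic one; it is cleaner (and covers the stochastic case $\Sigma\neq 0$ at once) to note that $C(t)$ is defined pointwise from the particles by \labelcref{eq:sample_cov}, so no reference to its dynamics is required.
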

\begin{proof}
The ensemble Kalman inversion \labelcref{eq:enkf_stoch} with datum $y$ reads
\begin{align*}
\dot{u}^j(t) &= -\cov(t) A^T\Gamma^{-1}(Au^j(t)-{y^\dagger} - y^\bot) +  C(t)A^T\Gamma^{-1}\sqrt{\Sigma}\dot{\wiener}^j(t) \\
&= -\cov(t) A^T\Gamma^{-1}(Au^j(t)-{y^\dagger}) + \cov(t) A^T\Gamma^{-1}y^\bot +  C(t)A^T\Gamma^{-1}\sqrt{\Sigma}\dot{\wiener}^j(t).
\end{align*}
Now the claim follows from
\begin{align*}
\cov(t)A^T\Gamma^{-1}y^\bot &= \frac{1}{J}\sum_{j=1}^J(u^j(t)-\mean(t))\langle u^j(t) - \mean(t), A^T\Gamma^{-1}y^\bot\rangle= 0.
\end{align*}
\end{proof}
Hence, we can formulate the following result on the asymptotic behavior.
As it can be expected from a regularization method for inverse problems, up to a noise term the time asymptotic limit \labelcref{eq:ODE_general_sol_source_and_noise} can be interpreted as projection of the initial datum $x_0$ onto the solution set of $\{Au=y^\dagger\}$ where $y^\dagger{\in\ran(A)}$.
In other words, $x(t)$ converges to a solution of $Au=y^\dagger$ with minimal value of the prior $x\mapsto\frac{1}{2}\norm{x-x_0}_{\cov_0}^2$, which can be interpreted as the formal limit of $t\to\infty$ of the variational regularization method \labelcref{eq:MAP}, see \cite{bungert2019solution} for a rigorous study of this phenomenon.
\begin{proposition}\label{prop:general_asymptotics}
Let {$y=y^\dagger + \eps$ where $y^\dagger\in\ran(A)$.} 
Then, {letting $u\in\R^n$ be an arbitrary element satisfying $Au=y^\dagger$,} the solution of \labelcref{eq:ODE_general} admits the asymptotic behavior
\begin{align*}
    \lim_{t\to\infty}x(t)  
    =x^\dagger + (A^T\Gamma^{-1}A)^-A^T\Gamma^{-1}\eps,
\end{align*}
where $x^{\dagger} := x_0 + \left(E-{\cov_\infty \cov_0^{-1}}\right)({u} - x_0)$ is equivalently characterized by 
\begin{align*}
x^{\dagger} = \argmin\left\lbrace\norm{x-x_0}_{\cov_0}\st x\in\R^n,\,Ax = {y^\dagger}\right\rbrace.
\end{align*}
Finally, we can also write
\begin{align*}
\lim_{t\to\infty}x(t) = \argmin\left\lbrace\norm{x-x_0}_{\cov_0}\st x\in\R^n,\,Ax = \Pi_{\ran(A)}^\Gamma(y)\right\rbrace.
\end{align*}
where $\Pi_V^\Gamma$ is the $\Gamma$-orthogonal projection operator onto a closed subspace $V$ {of $\R^m$}.
\end{proposition}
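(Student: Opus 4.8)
The plan is to pass to the limit in the explicit solution \labelcref{eq:ODE_general_sol_source_and_noise} and then to recast the resulting algebraic limit as the two stated variational problems. First I would take $t\to\infty$ termwise. Using $\cov(t)\cov_0^{-1}=SE(t)S^{-1}$ from \cref{thm:cov_dynamics}, the prefactor $(\cov(t)\cov_0^{-1})^{1/\alpha}=SE(t)^{1/\alpha}S^{-1}$ converges to $SE_\infty S^{-1}=\cov_\infty\cov_0^{-1}$, because $E(t)^{1/\alpha}\to E_\infty$ entrywise and $E_\infty$ is idempotent. Hence the first two summands tend to $x^\dagger=x_0+(E-\cov_\infty\cov_0^{-1})(u^\dagger-x_0)$. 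For the integral summand I would exploit $\eps^\bot\in\ran(A)^{\bot,\Gamma}=\ker(A^T\Gamma^{-1})$, so that $\cov_0A^T\Gamma^{-1}\eps=\cov_0A^T\Gamma^{-1}\bar\eps$, and write $\bar\eps=Av$ (possible since $\bar\eps\in\ran(A)$) to bring the trailing factor into the form $\cov_0A^T\Gamma^{-1}Av$ required by \cref{lem:intergral}. The lemma then yields the limit $(E-\cov_\infty\cov_0^{-1})v=S(E-E_\infty)S^{-1}v$, which I would identify with $(A^T\Gamma^{-1}A)^-A^T\Gamma^{-1}\eps$ via $(A^T\Gamma^{-1}A)^-A^T\Gamma^{-1}\eps=(\cov_0A^T\Gamma^{-1}A)^+\cov_0A^T\Gamma^{-1}Av=SD^+DS^{-1}v$ together with the elementary identity $D^+D=E-E_\infty$. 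This proves the first displayed limit.

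Next I would establish the variational characterization of $x^\dagger$. Feasibility $Ax^\dagger=Au^\dagger$ is immediate from $A\cov_\infty=0$ (\cref{thm:cov_dynamics}), which gives $A(E-\cov_\infty\cov_0^{-1})=A$. For optimality I would use that $x\mapsto\norm{x-x_0}_{\cov_0}^2$ is strictly convex, so that over the nonempty affine set $\{x:Ax=Au^\dagger\}$ with direction space $\ker A$ the minimizer is unique and characterized by feasibility plus the stationarity condition $\cov_0^{-1}(x^\dagger-x_0)\in(\ker A)^\bot=\ran(A^T)$. To check the latter I would use that the columns $w_1,\dots,w_k$ of $S$ spanning the range of $E-E_\infty$ are eigenvectors of $\cov_0A^T\Gamma^{-1}A$ with eigenvalues $\mu_i>0$; hence $\cov_0^{-1}w_i=\mu_i^{-1}A^T\Gamma^{-1}Aw_i\in\ran(A^T)$, and since $x^\dagger-x_0=S(E-E_\infty)S^{-1}(u^\dagger-x_0)$ lies in the span of the $w_i$, applying $\cov_0^{-1}$ lands in $\ran(A^T)$. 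This gives the second characterization.

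For the final formula I would first identify the projected datum. Since $y=(Au^\dagger+\bar\eps)+\eps^\bot$ with $Au^\dagger+\bar\eps\in\ran(A)$ and $\eps^\bot\in\ran(A)^{\bot,\Gamma}$, uniqueness of the $\Gamma$-orthogonal decomposition gives $\Pi_{\ran(A)}^\Gamma(y)=Au^\dagger+\bar\eps$. Choosing any $\tilde u$ with $A\tilde u=Au^\dagger+\bar\eps$ (which exists because the right-hand side lies in $\ran(A)$) and invoking the variational characterization just proved with $u^\dagger$ replaced by $\tilde u$, the minimizer over $\{x:Ax=\Pi_{\ran(A)}^\Gamma(y)\}$ equals $x_0+(E-\cov_\infty\cov_0^{-1})(\tilde u-x_0)$. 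This differs from $x^\dagger$ by $(E-\cov_\infty\cov_0^{-1})(\tilde u-u^\dagger)$, and since $A(\tilde u-u^\dagger)=\bar\eps$ the identification carried out in the first paragraph shows this difference equals $(A^T\Gamma^{-1}A)^-A^T\Gamma^{-1}\eps$. Hence this last minimizer coincides with $\lim_{t\to\infty}x(t)$.

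The step I expect to be the main obstacle is the optimality verification, i.e.\ proving $\cov_0^{-1}(x^\dagger-x_0)\in\ran(A^T)$: this is the only place where the detailed eigenstructure of $\cov_0A^T\Gamma^{-1}A$ must be used rather than the closed-form solution alone, and where the non-orthogonality of $S$ forces one to argue through the eigenvector relation $\cov_0^{-1}w_i=\mu_i^{-1}A^T\Gamma^{-1}Aw_i$ instead of a naive symmetry argument. By contrast, the limit passage reduces to plugging into \cref{lem:intergral,thm:cov_dynamics}, and the identification of $\Pi_{\ran(A)}^\Gamma(y)$ is routine once the $\Gamma$-orthogonal splitting of $y$ is written down.
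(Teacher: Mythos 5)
Your proposal is correct, and your limit passage is exactly the paper's Step 1: the same application of \cref{lem:intergral}, the same substitution $\bar\eps=Av$, and the same identity $(A^T\Gamma^{-1}A)^-A^T\Gamma^{-1}A=SD^+DS^{-1}=E-\cov_\infty\cov_0^{-1}$. Where you genuinely diverge is in the variational characterization of $x^\dagger$. For feasibility, the paper applies $A^T\Gamma^{-1}A$ to $x^\dagger$, uses $DE_\infty=0$, and then strips the factor via injectivity of $A^T\Gamma^{-1/2}$ on $\ran(\Gamma^{-1/2}A)$; your one-line argument from $A\cov_\infty=0$ (available in \cref{thm:cov_dynamics}) is shorter and equally valid. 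For optimality, the paper makes no use of first-order/Lagrange conditions: it establishes the idempotence $\cov_\infty=\cov_\infty\cov_0^{-1}\cov_\infty$, shows $(E-\cov_\infty\cov_0^{-1})\preim=0$ for every $\preim\in\ker A$ (so that $u^\dagger$ may be replaced by any feasible competitor), and then expands directly to the exact Pythagoras identity $\norm{x^\dagger-x_0}_{\cov_0}^2=\norm{x-x_0}_{\cov_0}^2-\norm{\cov_\infty\cov_0^{-1}(x-x_0)}_{\cov_0}^2$, which yields minimality together with a quantitative expression for the suboptimality of any feasible $x$. Your route---characterizing the unique minimizer of the strictly convex quadratic over the affine set by the stationarity condition $\cov_0^{-1}(x^\dagger-x_0)\in(\ker A)^\perp=\ran(A^T)$, verified through the eigenvector relation $\cov_0^{-1}w_i=\mu_i^{-1}A^T\Gamma^{-1}Aw_i$ for the columns $w_1,\dots,w_k$ of $S$ with $\mu_i>0$---is correct and more conceptual, and you rightly flag that this relation is what circumvents the non-orthogonality of $S$ (a naive symmetry argument would indeed fail); the price is that it delivers only the optimality statement, not the explicit residual identity. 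Your final step matches the paper's Step 4 in substance ($\Pi_{\ran(A)}^\Gamma(y)=Au^\dagger+\bar\eps$ by uniqueness of the $\Gamma$-orthogonal splitting, with $A^T\Gamma^{-1}\eps^\bot=0$ absorbing the complement), with the minor bonus that your detour through a generic $\tilde u$ makes explicit that the variational characterization holds for an arbitrary reference point, which the paper leaves implicit when it asserts that the last statement ``follows from everything proven so far.''
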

\begin{proof}
The proof is subdivided into four steps: First, we prove the asymptotic behavior, second, we show that $Ax^\dagger={y^\dagger}$, and third, we prove that $x^\dagger$ has minimal prior among all such parameters, after which we prove the alternative characterization of $\lim_{t\to\infty} x(t)$.\\
\textbf{Step 1:}
Using $\cov(t)\to\cov_\infty$ in \labelcref{eq:ODE_general_sol_source_and_noise}, applying \cref{lem:intergral}, and utilizing the fact that
\begin{align*}
    \left({\cov_\infty\cov_0^{-1}}\right)^\frac{1}{\alpha}=S {E_\infty}^\frac{1}{\alpha} S^{-1}=S E_\infty S^{-1} = \cov_\infty\cov_0^{-1}
\end{align*}
shows the identity
\begin{align*}
    \lim_{t\to\infty}x(t)=x_0+\left(E-\cov_\infty\cov_0^{-1}\right)({u}-x_0)+\left(E-\cov_\infty\cov_0^{-1}\right)\preim,
\end{align*}
where $\preim\in\R^n$ is such that $A\preim={\eps^\dagger}$, {where $\eps^\dagger:=\Pi_{\ran(A)}^\Gamma(\eps)$.}
Connecting this with the pseudo-inverse of $A^T\Gamma^{-1}A$ needs some thought:
Using $(A^T\Gamma^{-1}A)^- = SD^+S^{-1}\cov_0$ we obtain
\begin{align*}
    (A^T\Gamma^{-1}A)^-A^T\Gamma^{-1}A 
    &= S D^+ S^{-1}\cov_0 \cov_0^{-1} SDS^{-1}
    = SD^+DS^{-1} \\
    &= S(E-E_\infty)S^{-1} 
    = E - {\cov_\infty\cov_0^{-1}}.
\end{align*}
Finally, this implies
\begin{align*}
    \left(E-\cov_\infty\cov_0^{-1}\right)\preim &= 
    (A^T\Gamma^{-1}A)^-A^T\Gamma^{-1}A\preim\\
    &=(A^T\Gamma^{-1}A)^-A^T\Gamma^{-1}{\eps}^\dagger=(A^T\Gamma^{-1}A)^-A^T\Gamma^{-1}\eps,
\end{align*}
since $\eps={\eps^\dagger}+\eps^\bot$ where $\eps^\bot\in\ran(A)^{\bot,\Gamma}=\ker(A^T\Gamma^{-1})$.\\
\textbf{Step 2:}
Now we show that $Ax^\dagger = {y^\dagger}$.
To this end we rewrite $x^\dagger$ as
\begin{align*}
    x^\dagger = \cov_\infty\cov_0^{-1}(x_0-{u}) + {u} = SE_\infty S^{-1}(x_0-{u}) + {u}.
\end{align*}
Applying the matrix $A^T\Gamma^{-1}A$ to this equation yields
\begin{align*}
    A^T\Gamma^{-1}A x^\dagger &= A^T\Gamma^{-1}ASE_\infty S^{-1}(x_0-{u}) + A^T\Gamma^{-1}{y^\dagger}\\
    &= \cov_0^{-1}SDE_\infty S^{-1}(x_0 - {u}) + A^T\Gamma^{-1}{y^\dagger}\\
    &= A^T\Gamma^{-1}{y^\dagger}. 
\end{align*}
where we used that $D E_\infty=0$.
Since $\Gamma^{-1/2}A$ is trivially surjective on its range, the transpose $A^T\Gamma^{-1/2}$ is injective there.
This implies $\Gamma^{-1/2}A x^\dagger=\Gamma^{-1/2}{y^\dagger}$ and multiplication with $\Gamma^{1/2}$ shows that $A x^\dagger={y^\dagger}$.\\
\textbf{Step 3:}
We now show that $\norm{x^\dagger-x_0}_{\cov_0}$ is minimal among all parameters with $Ax={y^\dagger}$.
First, we note that $\cov_\infty\cov_0^{-1}=\cov_\infty\cov_0^{-1}\cov_\infty\cov_0^{-1}$ and hence $\cov_\infty = \cov_\infty\cov_0^{-1}\cov_\infty$.
Second, it holds that $x^\dagger-x_0=(E-\cov_\infty\cov_0^{-1})({u}-x_0)$.
We now claim that ${u}$ can be replaced by any $x\in\R^n$ such that $Ax={y^\dagger}$.
Indeed for $\preim={u}-x$ it holds $A\preim=0$ and hence
$0=\cov_0A^T\Gamma^{-1}A\preim=SDS^{-1}\preim$.
Since $S$ is invertible and $D=\diag(\mu_1,\dots,\mu_k,0,\dots,0)$, we infer that $S^{-1}\preim=(0,\dots,0,\ast,\dots,\ast)$ and therefore 
\begin{align*}
    \left(E-{\cov_\infty\cov_0^{-1}}\right)\preim = S(E-E_\infty)S^{-1}\preim = 0.
\end{align*}
This allows us to compute for any $x\in\R^n$ with $Ax={y^\dagger}$:
\begin{align*}
    \norm{x^\dagger-x_0}_{\cov_0}^2 &= \norm{(E-{\cov_\infty\cov_0^{-1}})(x-x_0)}_{\cov_0}^2 \\
    &= \norm{x-x_0}_{\cov_0}^2 - 2\left\langle x-x_0, {\cov_\infty\cov_0^{-1}}(x-x_0)\right\rangle_{\cov_0} + \norm{{\cov_\infty\cov_0^{-1}}(x-x_0)}_{\cov_0}^2\\
    &=\norm{x-x_0}_{\cov_0}^2 - 2\left\langle\cov_0^{-1}(x-x_0), {\cov_\infty\cov_0^{-1}}(x-x_0)\right\rangle + \norm{{\cov_\infty\cov_0^{-1}}(x-x_0)}_{\cov_0}^2.
\end{align*}
Using $\cov_\infty = \cov_\infty\cov_0^{-1}\cov_\infty$ and the symmetry of all covariance matrices, the inner product can be simplified as follows
\begin{align*}
    \left\langle\cov_0^{-1}(x-x_0), {\cov_\infty\cov_0^{-1}}(x-x_0)\right\rangle 
    &= \left\langle\cov_0^{-1}(x-x_0), \cov_\infty\cov_0^{-1}\cov_\infty\cov_0^{-1}(x-x_0)\right\rangle \\
    &= \left\langle\cov_0^{-1}\cov_\infty\cov_0^{-1}(x-x_0), \cov_\infty\cov_0^{-1}(x-x_0)\right\rangle \\
    &= \norm{\cov_\infty\cov_0^{-1}(x-x_0)}_{\cov_0}^2.
\end{align*}
Plugging this into the previous equation yields
\begin{align*}
    \norm{x^\dagger-x_0}_{\cov_0}^2 = \norm{x-x_0}_{\cov_0}^2-\norm{{\cov_\infty\cov_0^{-1}}(x-x_0)}_{\cov_0}^2\leq \norm{x-x_0}_{\cov_0}^2,
\end{align*}
which proves the statement since $Ax^\dagger = {y^\dagger}$ by Step 2 and $\norm{\cdot}_{\cov_0}$ is strictly convex.\\
\textbf{Step 4:} The last statement follows from everything we have proven so far by writing $y = {y^\dagger} + \eps^\dagger + \eps^\bot = \Pi_{\ran(A)}^\Gamma(y) + \eps^\bot$, hence
\begin{align*}
    \lim_{t\to\infty} x(t) &= \argmin\{\|x-x_0\|_{\cov_0}: x\in \R^n, Ax = \Pi_{\ran(A)}^\Gamma(y) \} + (A^T\Gamma^{-1}A)^-A^T\Gamma^{-1}\eps^\bot.
\end{align*}
Since $A^T\Gamma^{-1}\eps^\bot = 0$ by definition of the decomposition of $\eps$, we can conclude.
\end{proof}
}

{We close this section by deriving convergence rates of $x(t)$ to $x^\dagger$ in the case that there is no noise, i.e., $\eps=0$.
In the noisy case, \cref{prop:general_asymptotics} tells us that $x(t)$ does only converge to $x^\dagger$ up to a noise level.
Hence, in this case one can only expect a \emph{semi-convergence} behavior which we will not study in this article.
Instead, we report the following result for the noise-free case.}

\begin{proposition}\label{prop:x_cvgc_rates}
Assume that $y={y^\dagger}\in\ran(A)$ and let $x(t)$ and $x^\dagger$ be as in \cref{prop:general_asymptotics}.
Then there exists a constant $C>0$ such that it holds
\begin{align}
    \label{eq:rate_x}
    \norm{x(t)-x^\dagger} &\leq C\left(\frac{1}{\mathrm{gap}\cdot t}\right)^\frac{1}{\alpha},\quad\forall t\geq 0,\\
    \label{eq:rate_fwd_x}
    \norm{Ax(t)-y}_\Gamma &\leq C\cdot \left({\frac{\mu_{\max}}{t}}\right)^\frac{1}{\alpha},\quad \forall t \geq 0.
\end{align}
Here $\mathrm{gap}:=\mu_k$ denotes the spectral gap and $\mu_{\max}:=\mu_1$ the largest eigenvalue of the matrix $\cov_0A^T\Gamma^{-1}A$ .
\end{proposition}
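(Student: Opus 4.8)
The plan is to reduce everything to the explicit, simultaneously-diagonalized representation already available and then carry out elementary scalar estimates on the diagonal entries. First I would subtract the closed-form solution from its limit: with $\eps=0$ equation \eqref{eq:ODE_general_sol_source_and_noise} reads $x(t)=x_0+(E-(\cov(t)\cov_0^{-1})^{1/\alpha})(u^\dagger-x_0)$, while \cref{prop:general_asymptotics} gives $x^\dagger=x_0+(E-\cov_\infty\cov_0^{-1})(u^\dagger-x_0)$, so that $x(t)-x^\dagger=(\cov_\infty\cov_0^{-1}-(\cov(t)\cov_0^{-1})^{1/\alpha})(u^\dagger-x_0)$. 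Inserting the diagonalizations $\cov(t)\cov_0^{-1}=SE(t)S^{-1}$ and $\cov_\infty\cov_0^{-1}=SE_\infty S^{-1}$ from \cref{thm:cov_dynamics} turns this into $x(t)-x^\dagger=S(E_\infty-E(t)^{1/\alpha})S^{-1}(u^\dagger-x_0)$, where the diagonal matrix $E_\infty-E(t)^{1/\alpha}$ has entries $-(1+\alpha t\mu_i)^{-1/\alpha}$ for $i\le k$ and $0$ for $i>k$.

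For \eqref{eq:rate_x} I would simply take Euclidean norms and use submultiplicativity, $\norm{x(t)-x^\dagger}\le\norm{S}\,\norm{S^{-1}}\,\norm{u^\dagger-x_0}\,\norm{E_\infty-E(t)^{1/\alpha}}$. The last factor, being the spectral norm of a diagonal matrix, equals the largest modulus of its entries, namely $(1+\alpha t\mu_k)^{-1/\alpha}$, since the smallest positive eigenvalue $\mu_k=\mathrm{gap}$ produces the largest entry. Bounding $1+\alpha t\mu_k\ge\alpha t\,\mathrm{gap}$ yields $(1+\alpha t\mu_k)^{-1/\alpha}\le\alpha^{-1/\alpha}(\mathrm{gap}\cdot t)^{-1/\alpha}$, which is \eqref{eq:rate_x} after absorbing the fixed factors into $C$ (at $t=0$ the right-hand side is infinite and the bound is trivial).

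The forward estimate \eqref{eq:rate_fwd_x} is the delicate part and is where I expect the main obstacle: applying $A$ annihilates the slowly converging directions, so one must not crudely bound $\norm{A\,\cdot}_\Gamma$ by $\norm{A}\,\norm{\cdot}$ but rather retain the extra eigenvalue factor. Using $y=Au^\dagger=Ax^\dagger$ (from \cref{prop:general_asymptotics}) I would write $\norm{Ax(t)-y}_\Gamma^2=\langle x(t)-x^\dagger,\,A^T\Gamma^{-1}A(x(t)-x^\dagger)\rangle$ and fix the diagonalizing matrix conveniently: since $\cov_0^{1/2}A^T\Gamma^{-1}A\cov_0^{1/2}$ is symmetric positive semidefinite it equals $QDQ^T$ for an orthogonal $Q$, so $S:=\cov_0^{1/2}Q$ diagonalizes $\cov_0A^T\Gamma^{-1}A=SDS^{-1}$ and moreover $(\Gamma^{-1/2}AS)^T(\Gamma^{-1/2}AS)=S^TA^T\Gamma^{-1}AS=D$. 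Setting $\xi:=S^{-1}(u^\dagger-x_0)$ this collapses the quadratic form to a single diagonal sum, $\norm{Ax(t)-y}_\Gamma^2=\sum_{i=1}^k\frac{\mu_i}{(1+\alpha t\mu_i)^{2/\alpha}}\,\xi_i^2$, where precisely the additional factor $\mu_i$ encodes the smoothing by $A$.

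It then remains to estimate the scalars $\frac{\mu_i}{(1+\alpha t\mu_i)^{2/\alpha}}$. Dropping the $1$ gives $\frac{\mu_i}{(1+\alpha t\mu_i)^{2/\alpha}}\le\alpha^{-2/\alpha}\mu_i^{\,1-2/\alpha}\,t^{-2/\alpha}$, and since there are only finitely many eigenvalues the quantity $\max_{i\le k}\mu_i^{\,1-2/\alpha}$ is a finite, $t$-independent constant (the sign of the exponent $1-2/\alpha$ decides whether it is attained at $\mu_1$ or at $\mu_k$, but either way it is fixed). Writing $t^{-2/\alpha}=\mu_{\max}^{-2/\alpha}(\mu_{\max}/t)^{2/\alpha}$ and summing against $\sum_i\xi_i^2=\norm{\xi}^2$ yields $\norm{Ax(t)-y}_\Gamma\le C(\mu_{\max}/t)^{1/\alpha}$, with $C$ collecting $\norm{\xi}$, $\alpha^{-1/\alpha}$, $\mu_{\max}^{-1/\alpha}$ and $(\max_i\mu_i^{\,1-2/\alpha})^{1/2}$. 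The only genuine care needed is in this scalar step and in recognizing that the $\mu_{\max}$ appearing in the stated rate is a scaling convenience whose residual dependence is swallowed by the $t$-independent constant $C$; both claims hold trivially at $t=0$, where the right-hand sides are infinite.
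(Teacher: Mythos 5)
Your proposal is correct and takes essentially the same route as the paper's proof: write $x(t)-x^\dagger = S\bigl(E(t)^{1/\alpha}-E_\infty\bigr)S^{-1}(x_0-u^\dagger)$ and read off the rate from the largest diagonal entry $(1+\alpha t\mu_k)^{-1/\alpha}$, then get \labelcref{eq:rate_fwd_x} by applying $A^T\Gamma^{-1}A$, whose diagonalization supplies the extra factor $\mu_i$ in the quadratic form. Your specific choice $S=\cov_0^{1/2}Q$ with $S^TA^T\Gamma^{-1}AS=D$ and the explicit scalar bound $\mu_i(1+\alpha t\mu_i)^{-2/\alpha}\leq \alpha^{-2/\alpha}\mu_i^{1-2/\alpha}t^{-2/\alpha}$ merely spell out the steps the paper leaves implicit after ``multiplying this with $x(t)-x^\dagger$''.
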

\begin{proof}
Diagonalizing and subtracting $x(t)$ (given by \labelcref{eq:ODE_general_sol_source_and_noise} for $\eps=0$) and $x^\dagger$ (defined as in \cref{prop:general_asymptotics}) yields
\begin{align*}
    x(t) - x^\dagger &= S({E(t)}^\frac{1}{\alpha}-E_\infty)S^{-1}(x_0-\preim)\\
     &= S \diag\left(\frac{1}{({1+\alpha t\mu_1})^\frac{1}{\alpha}},\dots,\frac{1}{({1+2t\mu_k})^\frac{1}{\alpha}},0,\dots,0\right) S^{-1}(x_0-\preim),
\end{align*}
where $\mathrm{gap}:=\mu_k$ denotes the smallest non-zero eigenvalue of $A^T\Gamma^{-1}A$.
Taking norms yields the convergence rate \labelcref{eq:rate_x}.

Using this expression for $x(t)-x^\dagger$ and the diagonalization $A^T\Gamma^{-1}A = \cov_0^{-1} SDS^{-1}$ we also find
\begin{align*}
    &\phantom{=}A^T\Gamma^{-1}A(x(t)-x^\dagger)\\
    &=\cov_0^{-1}SD({E(t)}^\frac{1}{\alpha}-E_\infty)S^{-1}(x_0-\preim)\\ &=S \diag\left(\frac{\mu_1}{({1+\alpha t\mu_1})^\frac{1}{\alpha}},\dots,\frac{\mu_k}{({1+\alpha t\mu_k})^\frac{1}{\alpha}},0,\dots,0\right) S^{-1}(x_0-\preim).
\end{align*}
Multiplying this with $x(t)-x^\dagger$ and using $Ax^\dagger=y$ shows \labelcref{eq:rate_fwd_x}.
\end{proof}

\section{Ensemble and residual spreads of {naive} EKI for clean data}
\label{sec:spreads}

In this section we study the ensemble and residual spreads (defined in \labelcref{eq:ensemble_spread,eq:residual_spread}) of the {naive} EKI \labelcref{eq:enkf_stoch} with $\Sigma=0$.
{For this we utilize the system \labelcref{eq:ODE_cov_emp,eq:ODE_mean_emp} which describes the evolution of the {sample} covariance $C(t)$ and mean $m(t)$, see \cref{thm:deterministic_EKI}.}

\subsection{Convergence of the spreads}

{For convenience we repeat the definition of deviations $e^j$, residuals $r^j$, and residual mean $r$, given by 
\begin{align*}
    e^j(t) &:= u^j(t)-m(t),\\
    r^j(t) &:=u^j(t) - {u},\\
    r(t) &:= \frac{1}{J}\sum_{j=1}^J r^j(t) = m(t) - {u},
\end{align*}
and the time-dependent functions \labelcref{eq:ensemble_spread,eq:residual_spread,eq:fwd_ensemble_spread,eq:fwd_residual_spread}, which describe the ensemble collapse and residual convergence:
\begin{alignat*}{2}
    &V_e(t) = \frac{1}{2J}\sum_{j=1}^J \norm{e^j(t)}^2,
    &&V_r(t) = \frac{1}{2J}\sum_{j=1}^J \norm{r^j(t)}^2,\\
    &\mathfrak V_e(t) = \frac{1}{2J}\sum_{j=1}^J \norm{Ae^j(t)}_\Gamma^2,\quad
    &&\mathfrak V_r(t) = \frac{1}{2J}\sum_{j=1}^J \norm{Ar^j(t)}_\Gamma^2.
\end{alignat*}}
We recall from \cite{blomker2019well} that the ensemble spread $V_e(t)$ decreases monotonously with time.
It does not necessarily converge to zero unless in the fully observed case of an invertible forward operator $A$, as we can only expect ensemble collapse along the components orthogonal to the kernel of $A$. 

To see the decrease of $V_e$ one computes
\begin{align*}
    \dot{V}_e(t) &= \frac{1}{J}\sum_{j=1}^J \langle \dot e^j(t), e^j(t)\rangle = -\frac{1}{J}\sum_{j=1}^J \langle C(t)A^T\Gamma^{-1}A e^j(t), e^j(t)\rangle\\
    &= -\frac{1}{J^2}\sum_{i,j=1}^J \langle e^i(t), e^j(t)\rangle \langle e^i(t), A^T\Gamma^{-1}A e^j(t)\rangle\leq 0,
\end{align*}
where Lemma~A.3. in~\cite{blomker2019well} ensures that the last term is non-negative.
For the ensemble spread in observation space, given by \labelcref{eq:fwd_ensemble_spread}, one can even prove convergence to zero. 
It holds
\begin{align*}
    \dot{\mathfrak V}_e(t) &= \frac{1}{J}\sum_{j=1}^J\langle A^T\Gamma^{-1}Ae^j(t),\dot e^j(t)\rangle = - \frac{1}{J}\sum_{j=1}^J\langle A^T\Gamma^{-1}Ae^j(t),C(t)A^T\Gamma^{-1}A e^j(t)\rangle \\
    &=-\frac{1}{J^2}\sum_{j,k=1}^J \langle A^T\Gamma^{-1} Ae^j(t),e^k(t)\rangle^2 = -\frac{1}{J^2}\sum_{j,k=1}^J\langle Ae^ j(t),Ae^k(t)\rangle_\Gamma^2.
\end{align*}
We can bound this by removing all terms $j\neq k$ and applying Jensen's inequality for sums for the convex function $x\mapsto x^2$.
\begin{align*}
    \dot{\mathfrak V}_e(t) &\leq -\frac{1}{J^2}\sum_{j=1}^J\norm{Ae^ j(t)}_\Gamma^4 \leq -\frac{1}{J}\left(\frac{1}{J}\sum_{j=1}^J\norm{Ae^ j(t)}_\Gamma^2\right)^2 = -\frac{4}{J}\mathfrak V_e(t)^2.
\end{align*}
An ordinary differential equation comparison principle then yields that 
\begin{align*}
\mathfrak V_e(t) \leq \frac{1}{\frac{4}{J} t + \frac{1}{\mathfrak V_e(0)}}.    
\end{align*}
Monotonous decrease is not true for the residual spread, whose derivative in time is
\begin{align*}
    \dot{V}_r(t) &= \frac{1}{2J}\sum_{j=1}^J \langle \dot r^j(t), r^j(t)\rangle = -\frac{1}{2J}\sum_{j=1}^J \langle A^T\Gamma^{-1}A C(t) r^j(t), r^j(t)\rangle\\
    &= -\frac{1}{2J^2}\sum_{i,j=1}^J \langle e^i(t), r^j(t)\rangle \langle e^i(t), A^T\Gamma^{-1}A r^j(t)\rangle,
\end{align*}
which does not carry a sign in general.
{The residual spread does indeed not decay mo\-not\-o\-nous\-ly for {naive} EKI with noise-free data, which we study in the following.}
Indeed, simple simulations (see \cref{fig:nonmonotonicity}) show that the residual norms can increase in time. 
There are two issues at play here:
\begin{itemize}
    \item Correct choice of ${u}$ in $r^j(t)=u^j(t)-{u}$ and
    \item Skewness of the Euclidean norm with respect to the EKI dynamics.
\end{itemize}
First, the residuals $r^j$ and their mean $r$ are defined via a choice of  {``best guess''} ground truth parameter ${u}$ such that $A{u} = {y^\dagger}$, {where $y^\dagger\in\ran(A)$}.
While the degrees of freedom in this choice are irrelevant in the observation space, they are eminent in the parameter domain: 
For a given initial ensemble $\{u_0^j\}_{j=1}^J$, the mean $m(t)$ will converge to a well-defined limit $m_\infty := \lim_{t\to\infty}m(t)$ according to \cref{thm:deterministic_EKI} {and in the noise-free case it holds $m_\infty=m^\dagger$}.
It is exactly this parameter we need to choose as a candidate for the reference parameter ${u}$ in $r^j(t) := u^j(t)-{u}$, which can be seen from the following reformulation of $V_r$.
\begin{proposition}
Let ${u}\in\R^n$ such that $y=A{u}$ and {let the residuals be defined by $r^j(t)=u^j(t)-{u}$}.
It holds 
\begin{align}\label{eq:reform_res_spread}
    V_r(t) = V_e(t) + \frac{1}{2}\norm{m(t)-{u}}^2. 
\end{align}
\end{proposition}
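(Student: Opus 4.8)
The plan is to prove the identity \labelcref{eq:reform_res_spread} by a direct algebraic decomposition of each residual $r^j(t)$ into the deviation $e^j(t)$ plus a common shift, then expand the squared norms and show that the cross term vanishes after summing over $j$. This is essentially the bias–variance / parallel-axis decomposition specialized to the EKI ensemble.

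First I would write $r^j(t) = u^j(t) - u^\dagger = (u^j(t) - m(t)) + (m(t) - u^\dagger) = e^j(t) + (m(t) - u^\dagger)$, using the definitions recalled just above the proposition. Plugging this into the definition of $V_r$ and expanding the squared Euclidean norm gives
\begin{align*}
    V_r(t) &= \frac{1}{2J}\sum_{j=1}^J \norm{e^j(t) + (m(t)-u^\dagger)}^2 \\
    &= \frac{1}{2J}\sum_{j=1}^J \left(\norm{e^j(t)}^2 + 2\langle e^j(t), m(t)-u^\dagger\rangle + \norm{m(t)-u^\dagger}^2\right).
\end{align*}
The first summand reproduces $V_e(t)$ by definition \labelcref{eq:ensemble_spread}, and the third summand contributes $\frac{1}{2}\norm{m(t)-u^\dagger}^2$ since it does not depend on $j$ and the prefactor $\frac{1}{2J}$ cancels the $J$-fold sum.

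The only step requiring an argument is that the cross term vanishes, and this is where I would focus attention. The key observation is that $m(t)-u^\dagger$ is independent of the summation index $j$, so
\begin{align*}
    \frac{1}{J}\sum_{j=1}^J \langle e^j(t), m(t)-u^\dagger\rangle = \left\langle \frac{1}{J}\sum_{j=1}^J e^j(t),\; m(t)-u^\dagger\right\rangle = 0,
\end{align*}
because $\frac{1}{J}\sum_{j=1}^J e^j(t) = \frac{1}{J}\sum_{j=1}^J (u^j(t)-m(t)) = m(t) - m(t) = 0$ by the definition \labelcref{eq:sample_mean} of the empirical mean. Combining the three contributions yields \labelcref{eq:reform_res_spread} directly.

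**The main obstacle** is essentially conceptual rather than technical: there is no hard step here, since the result is a purely algebraic consequence of the deviations being centered. The one thing to be careful about is that the identity holds for \emph{any} choice of $u^\dagger$ (the cross term cancellation does not require $Au^\dagger = y$ or the optimal reference parameter), so the hypothesis on $u^\dagger$ is not actually used in this proof — it is stated because the proposition is later applied to the specific reference parameter, and because the definition of $r^j$ presupposes such a $u^\dagger$. I would note that this decomposition is exactly what makes the non-monotonicity of $V_r$ transparent: while $V_e$ decreases monotonically, the added term $\frac{1}{2}\norm{m(t)-u^\dagger}^2$ can increase, which is the phenomenon studied in the remainder of the section.
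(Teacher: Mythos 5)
Your proof is correct and is essentially the paper's own argument: both consist of the same direct algebraic expansion of $\|r^j(t)\|^2$, with the cross term $\frac{1}{J}\sum_{j=1}^J \langle e^j(t), m(t)-u^\dagger\rangle$ vanishing because the deviations are centered — the paper reaches this by adding and subtracting $\langle u^j(t),m(t)\rangle$ and $\|m(t)\|^2$ inside the expanded square rather than by decomposing $r^j(t)=e^j(t)+(m(t)-u^\dagger)$ at the outset, but the computation is identical. Your side observation is also accurate: the hypothesis $Au^\dagger=y$ is not used in the algebra (nor in the paper's version); it only matters for the subsequent application where $u^\dagger=m^\dagger$ makes the second term tend to zero.
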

\begin{proof}
This can be seen as follows:
\begin{align*}
    V_r(t) &= \frac{1}{2J}\sum_{j=1}^J\norm{r^j(t)}^2 
    =\frac{1}{2J}\sum_{j=1}^J\left(\norm{u^j(t)}^2-2\langle u^j(t),{u}\rangle+\norm{{u}}^2\right) \\
    &=\frac{1}{2J}\sum_{j=1}^J\Big(\norm{u^j(t)}^2-2\langle u^j(t),m(t)\rangle+\norm{m(t)}^2 -2\langle u^j(t),{u}-m(t)\rangle\\
    &\hspace{5cm}+\norm{{u}}^2-\norm{m(t)}^2\Big) \\
    &=\frac{1}{2J}\sum_{j=1}^J\norm{u^j(t)-m(t)}^2 + \frac{1}{2}\norm{{u}}^2-\langle m(t),{u}-m(t)\rangle - \frac{1}{2}\norm{m(t)}^2 \\
    &=V_e(t) + \frac{1}{2}\norm{m(t)-{u}}^2. 
\end{align*}
\end{proof}

{We can see that ${u} := m^\dagger$, defined as in \cref{thm:deterministic_EKI},} is the canonical choice because it guarantees {that the second term in the decomposition of $V_r(t)$ vanishes.}
We obtain the following trivial corollary that for ${u}=m^\dagger$ the ensemble spread and the residual spread converge to the same value (which is not zero, in general).
\begin{corollary}
Let ${u}:=m^\dagger$. 
Then it holds
\begin{align}
    \lim_{t\to\infty}V_e(t) = \lim_{t\to\infty}V_r(t).
\end{align}
\end{corollary}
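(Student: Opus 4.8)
The plan is to read this off directly from the reformulation \labelcref{eq:reform_res_spread} established in the preceding proposition, which states that
\begin{align*}
    V_r(t) = V_e(t) + \frac{1}{2}\norm{m(t)-u^\dagger}^2.
\end{align*}
Since we are in the noise-free setting ($\eps=0$), \cref{thm:deterministic_EKI} guarantees that the sample mean converges, $m(t)\to m_\infty$ as $t\to\infty$, and moreover that $m_\infty = m^\dagger$. Taking $u^\dagger:=m^\dagger$ by hypothesis, the correction term becomes $\frac{1}{2}\norm{m(t)-m^\dagger}^2$, which tends to $0$ as $t\to\infty$. Hence the two spreads have the same limit.

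To make this rigorous I first need the limits on both sides to exist. For $V_e$ this is immediate: it was shown above (following \cite{blomker2019well}) that $V_e$ is monotonically decreasing, and it is bounded below by $0$, so $\lim_{t\to\infty}V_e(t)$ exists. The convergence $m(t)\to m^\dagger$ then shows that $\frac{1}{2}\norm{m(t)-m^\dagger}^2$ converges (to zero), and therefore $V_r(t)=V_e(t)+\frac{1}{2}\norm{m(t)-m^\dagger}^2$ converges as a sum of two convergent quantities, with
\begin{align*}
    \lim_{t\to\infty}V_r(t) = \lim_{t\to\infty}V_e(t) + \lim_{t\to\infty}\frac{1}{2}\norm{m(t)-m^\dagger}^2 = \lim_{t\to\infty}V_e(t).
\end{align*}

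There is no real obstacle here; the only point requiring a moment of care is ensuring the correction term genuinely vanishes, which hinges on the identification $m_\infty=m^\dagger$ in the noise-free case. That identification is exactly the content of \cref{thm:deterministic_EKI} together with the remark that $m_\infty=m^\dagger$ when $\eps=0$, so the entire argument reduces to substituting the canonical choice $u^\dagger=m^\dagger$ into the decomposition and passing to the limit. This is why the statement is flagged as a trivial corollary.
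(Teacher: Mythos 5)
Your proof is correct and follows essentially the same route as the paper: substituting $u^\dagger=m^\dagger$ into the decomposition \labelcref{eq:reform_res_spread} and using $m(t)\to m^\dagger$ in the noise-free case from \cref{thm:deterministic_EKI}. Your additional observation that $\lim_{t\to\infty}V_e(t)$ exists because $V_e$ is monotonically decreasing and bounded below is a small but welcome tightening that the paper leaves implicit.
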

\begin{proof}
The proof follows from \labelcref{eq:reform_res_spread} together with the fact that $m(t)\to m^\dagger$ {in the noiseless case according to \cref{thm:deterministic_EKI}.}
\end{proof}

However, even with this choice of ${u=m^\dagger}$, the residual spread $V_r$ can still \emph{fail to decrease} (see \cref{fig:nonmonotonicity} for an example). 
In contrast, the residual spread in observation space \labelcref{eq:fwd_residual_spread} does indeed decrease monotonously.
Similar to above one can express $\mathfrak V_r(t)$ in terms of $\mathfrak V_e(t)$ and obtain monotonous convergence to zero.
\begin{proposition}
The residual spread in parameter space $\mathfrak V_r(t)$ admits the expression
\begin{align}\label{eq:reform_fwd_res_spread}
    \mathfrak V_r(t) = \mathfrak V_e(t) + \frac{1}{2}\norm{A m(t) - y}_\Gamma^2.
\end{align}
Furthermore $t\mapsto\mathfrak V_r(t)$ is non-increasing and converges to zero with rate $1/t$.
\end{proposition}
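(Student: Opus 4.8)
The plan is to prove the three assertions separately, reusing the orthogonal splitting already exploited for $V_r$ in \eqref{eq:reform_res_spread}. For the decomposition \eqref{eq:reform_fwd_res_spread} I would write $r^j(t)=e^j(t)+(m(t)-u^\dagger)$ and use $Au^\dagger=y$ to get $Ar^j(t)=Ae^j(t)+(Am(t)-y)$. Expanding $\norm{Ar^j(t)}_\Gamma^2$, summing over $j$ and dividing by $2J$ produces three terms: $\mathfrak V_e(t)$, the constant $\frac12\norm{Am(t)-y}_\Gamma^2$, and a cross term $\big\langle A\tfrac1J\sum_{j}e^j(t),\,Am(t)-y\big\rangle_\Gamma$. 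The cross term vanishes because $\tfrac1J\sum_j e^j(t)=m(t)-m(t)=0$, which yields \eqref{eq:reform_fwd_res_spread}. Note this holds for every reference $u^\dagger$ with $Au^\dagger=y$, since only $Am(t)-y=Am(t)-Au^\dagger$ enters, so here the choice of $u^\dagger$ within the solution set is irrelevant.

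For the monotonicity I would bound the two summands separately. The term $\mathfrak V_e$ is already known to be non-increasing; indeed the computation preceding the proposition gives $\dot{\mathfrak V}_e(t)\le-\tfrac4J\mathfrak V_e(t)^2\le0$. For the second summand $R(t):=\frac12\norm{Am(t)-y}_\Gamma^2$ I would differentiate along the mean dynamics \eqref{eq:ODE_mean_emp}. Setting $w(t):=A^T\Gamma^{-1}(Am(t)-y)$, the chain rule gives $\dot R(t)=\langle Am(t)-y,\,A\dot m(t)\rangle_\Gamma=-\langle w(t),\,C(t)w(t)\rangle\le0$, because $C(t)$ is symmetric positive semidefinite. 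Hence both summands decrease and so does their sum $\mathfrak V_r$.

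For the rate I would invoke the two rates already established. The differential inequality above integrates to $\mathfrak V_e(t)\le\big(\tfrac4Jt+\mathfrak V_e(0)^{-1}\big)^{-1}=O(1/t)$ and in particular $\mathfrak V_e(t)\to0$. For $R(t)$ I observe that $m(t)$ is precisely the solution of \eqref{eq:ODE_general} for the deterministic value $\alpha=2$, so the forward rate \eqref{eq:rate_fwd_x} of \cref{prop:x_cvgc_rates} applies and gives $\norm{Am(t)-y}_\Gamma\le C(\mu_{\max}/t)^{1/2}$, whence $R(t)=\frac12\norm{Am(t)-y}_\Gamma^2=O(1/t)$ and $R(t)\to0$. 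Adding the two $O(1/t)$ bounds gives $\mathfrak V_r(t)=O(1/t)$, and since both summands vanish in the limit, so does $\mathfrak V_r$.

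The decomposition and the sign of $\dot R$ are routine; the only point requiring care is the rate bookkeeping for $R(t)$, i.e. checking that \cref{prop:x_cvgc_rates} is applied in the noise-free regime $y=Au^\dagger$ with $\alpha=2$, so that the $1/\sqrt t$ decay of $\norm{Am(t)-y}_\Gamma$ squares to the claimed $1/t$ decay of $R$ and hence of $\mathfrak V_r$.
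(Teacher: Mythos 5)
Your proposal is correct and takes essentially the same route as the paper's proof: the same orthogonal expansion with vanishing cross term (since $\frac{1}{J}\sum_{j}e^j(t)=0$), the same differentiation of $\frac{1}{2}\norm{Am(t)-y}_\Gamma^2$ along \labelcref{eq:ODE_mean_emp} (the paper merely writes $C(t)=\frac{1}{J}\sum_{j}e^j(t)\otimes e^j(t)$ to exhibit the sign as a sum of squares, which is your positive-semidefiniteness argument made explicit), and the same rate bookkeeping via \labelcref{eq:rate_fwd_x} with $\alpha=2$ in the noise-free regime combined with the $O(1/t)$ bound on $\mathfrak V_e$. Your explicit check that the application of \cref{prop:x_cvgc_rates} requires $y=Au^\dagger\in\ran(A)$ is a point the paper leaves implicit, and is correctly handled.
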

\begin{proof}
The proof of \labelcref{eq:reform_fwd_res_spread} work precisely as above for $V_r(t)$.
Furthermore, it holds
{%
\begin{align*}
    \frac{\d}{\d t}\frac1 2 \|A m(t)-y\|_\Gamma^2 
    &= \langle A^T\Gamma^{-1}(A\mean(t)-y),\dot{m}(t)\rangle\\ 
    &=-\langle A^T\Gamma^{-1}(A m(t)-y),C(t)A^T\Gamma^{-1}(A m(t)-y)\rangle\\
    &=-\frac{1}{J}\sum_{j=1}^J\langle A^T\Gamma^{-1}(A m(t)-y),e^j(t)\otimes e^j(t)A^T\Gamma^{-1}(A\mean(t)-y)\rangle\\
    &=-\frac{1}{J}\sum_{j=1}^J\langle A^T\Gamma^{-1}(A m(t)-y),e^j(t)\rangle^2
    \leq 0.
\end{align*}
}
Hence, using that $t\mapsto\mathfrak V_e(t)$ converges to zero monotonously with rate $1/t$ and that the same holds true for $t\mapsto\frac1 2 \|A m(t)-y\|_\Gamma^2$ (see~\labelcref{eq:rate_fwd_x} in the case $\alpha=2$ for the rate), we obtain the assertion.
\end{proof}

We emphasize again that monotone convergence of $t\mapsto \frac{1}{2}\|A m(t)-y\|_\Gamma^2$ does not mean that the quantities $\norm{m(t)-m^\dagger}$ or $V_r(t)$ decrease as well. 
First, the mapping of this quantity via $A$ only keeps track of the data-informed parameter dimensions, i.e., those orthogonal to the kernel of $A$. And secondly, even invertibility of $A$ still does not imply monotonicity of $\|m(t)-{u}\| $ as the mapping $A$ can warp the coordinate system in such a way that this property is lost. 
This can be seen in an elementary example unrelated to the EKI: Consider the curve $x(t) = (\cos(t),\sin(t))$ for which $V(t) := \|x(t)\|^2$ is constant, i.e., monotonously non-increasing. 
On the other hand, with $A=\diag(2,1)$, the mapping $\tilde V(t) = \|Ax(t)\|^2$ is not monotonous. 

\begin{example}\label{ex:nonmon}
As a concrete example for the non-monotonicity of the mean and the residual, we can consider the forward operator $A = \diag(100,1)$, observation $y = (0,0)^T$, and an initial ensemble with mean $m_0 = (100,100)^T$ and {sample} covariance 
\begin{align*}
C_0 = 
\begin{pmatrix}
\phantom{-}25 & -24\\
-24 & \phantom{-}25
\end{pmatrix}, 
\end{align*}
whose eigenvectors are $(-1,1)^T$ and $(1,1)^T$ with eigenvalues $49$ and $1$, respectively.

\cref{fig:nonmonotonicity} shows the initial ensemble and the trajectories of the ensemble and its sample mean in the parameter space.
Clearly, the sample mean and the whole ensemble move away from their final limit $(0,0)^T$ for quite some time until they finally ``change direction'' and converge towards their limit. 
The initial shearing of the ensemble combined with the strong weighting of the horizontal direction, which is encoded in the forward operator, leads to an initial movement of the ensemble along its principal axis to the top left.
\begin{figure}[hbt]
    \centering
    {\includegraphics[width=0.6\textwidth,trim=2cm 2.5cm 5cm 2.5cm,clip]{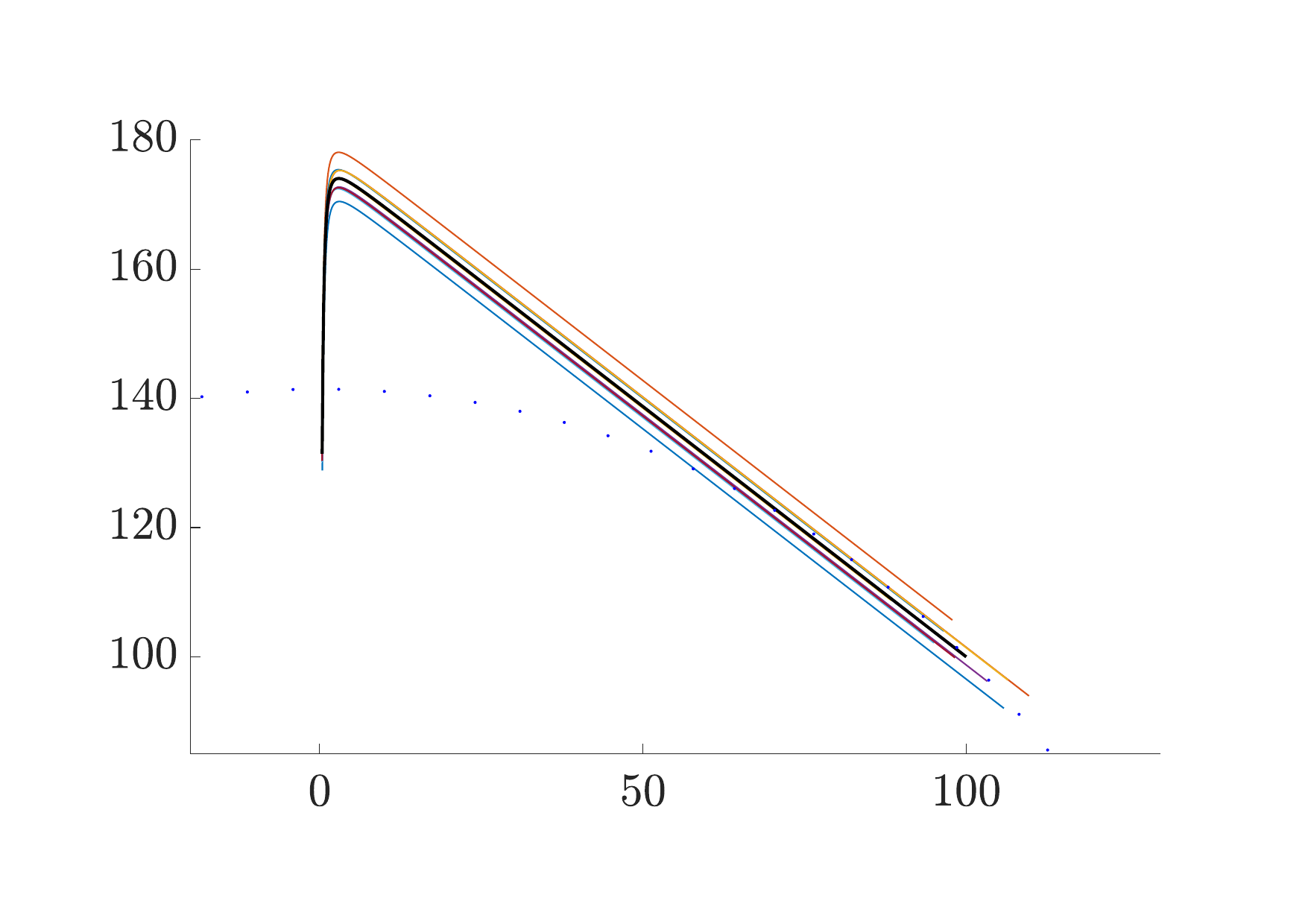}}%
    \caption{From bottom right to top left: Trajectories of {naive} EKI (black curve is the mean $m(t)$) for $t\in[0,1]$. The Euclidean sphere (dotted) through $m_\infty$ demonstrates non-monotonicity of the mean.}
    \label{fig:nonmonotonicity}
\end{figure}
\end{example}

Applied to the present setting, this means that the Euclidean norm is not the natural norm with respect to which we should view the dynamics of the ensemble.
Hence, we need to either settle for \textit{non-monotonous convergence} of $\norm{m(t)-m^\dagger}$, or we need to pick a more \textit{problem-adapted norm}, as presented in the following.

\subsection{Monotonicity in a problem-adapted norm}
Now we see how to define a new norm on the parameter space with respect to which we can prove monotonous convergence of the residuals {of {naive} EKI with data in the range of the forward operator}. 

Recall the ordinary differential equation \labelcref{eq:ODE_mean_emp} for {the {sample} mean $m(t)$ with data} $y=A\preim\in\ran(A)$:
\begin{align*}
    \dot m(t) = -C(t)A^T\Gamma^{-1}(Am(t)-y) = -SD(t)S^{-1}(m(t) - \preim).
\end{align*}
By defining $\rho(t) = S^{-1}m(t)$, we obtain the ordinary differential equation
\begin{align*}
    \dot\rho(t) = -D(t) (\rho(t) - S^{-1}\preim),
\end{align*}
which decouples into $n$ ordinary differential equations since $D(t)$ is a diagonal matrix. 
This allows us to prove the following Lyapunov type estimate.
\begin{proposition}\label{prop:lyapunov}
Let $S$ be such that $C_0 A^T\Gamma^{-1}A = SDS^{-1}$ as in \cref{thm:cov_dynamics}. Then 
\begin{align*}
     L(m) := \frac{1}{2}\|S^{-1}(m - \preim)\|^2
\end{align*}
is a Lyapunov function for the dynamics of the {sample} mean $m(t)$, meaning that $\frac{\de}{\de t}{L}(m(t)) \leq 0$.
\end{proposition}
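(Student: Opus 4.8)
The plan is to use the diagonalizing change of variables $\rho(t)=S^{-1}m(t)$ introduced immediately before the statement, under which the Lyapunov candidate collapses to a plain Euclidean distance. Setting $\rho^\ast:=S^{-1}\preim$, we have $L(m(t))=\tfrac12\norm{\rho(t)-\rho^\ast}^2$, and the dynamics decouple into $\dot\rho(t)=-D(t)(\rho(t)-\rho^\ast)$, where $D(t)=\diag\left(\frac{\mu_i}{1+\alpha t\mu_i}\right)_{i=1}^n$ is the diagonal matrix from \cref{thm:cov_dynamics} (here $\alpha=2$, though the argument is insensitive to the precise value $\alpha\ge1$).

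First I would differentiate $L$ along the flow by the chain rule, obtaining
\[
    \frac{\de}{\de t}L(m(t))=\langle \rho(t)-\rho^\ast,\,\dot\rho(t)\rangle=-\langle \rho(t)-\rho^\ast,\,D(t)(\rho(t)-\rho^\ast)\rangle.
\]
Second, I would check that $D(t)$ is positive semidefinite for every $t\ge0$: by \cref{lem:product_diagonalizable} the matrix $C_0A^T\Gamma^{-1}A$ is positive semidefinite, so its eigenvalues satisfy $\mu_i\ge0$, and hence each diagonal entry $\frac{\mu_i}{1+\alpha t\mu_i}$ is nonnegative because the denominators are strictly positive for $t\ge0$. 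The quadratic form on the right-hand side is therefore nonnegative, which yields $\frac{\de}{\de t}L(m(t))\le0$ as claimed.

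The computation is essentially immediate once the change of variables is in place, so I do not anticipate a genuine obstacle; the only point demanding a little care is the sign, i.e.\ the positive semidefiniteness of $D(t)$, which rests on the semidefiniteness of $C_0A^T\Gamma^{-1}A$ from \cref{lem:product_diagonalizable} together with positivity of the denominators. If one wanted $L$ to be a \emph{strict} Lyapunov function, one would additionally note that the derivative vanishes precisely when $\rho(t)-\rho^\ast$ lies in the kernel of $D(t)$, that is, in the span of the eigenvectors associated with $\mu_i=0$ --- exactly the directions in $\ker(A)$ along which the mean cannot be moved by the dynamics.
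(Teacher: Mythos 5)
Your proposal is correct and follows essentially the same route as the paper: the change of variables $\rho(t)=S^{-1}m(t)$ turns $L$ into a Euclidean distance, the chain rule gives $\frac{\de}{\de t}L(m(t))=-\langle \rho(t)-S^{-1}\preim,\,D(t)(\rho(t)-S^{-1}\preim)\rangle$, and positive semidefiniteness of $D(t)$ (via \cref{lem:product_diagonalizable} and positivity of the denominators $1+\alpha t\mu_i$) gives the sign. In fact your version is slightly more careful than the paper's, which leaves the semidefiniteness of $D(t)$ implicit by writing the derivative as $-\norm{\sqrt{D(t)}\,\cdot\,}^2$ (and whose displayed formula omits the shift by $S^{-1}\preim$, a typo you avoid).
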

\begin{proof}
We observe $L(m(t))=\frac{1}{2}\norm{\rho(t) - S^{-1}\preim}^2$ and compute
\begin{align*}
    \frac{\de}{\de t}L(m(t)) = \langle \dot\rho(t), \rho(t)-S^{-1}\preim\rangle = -\norm{\sqrt{D(t)}\rho(t)}^2\leq 0.
\end{align*}
\end{proof}

\begin{remark}
A couple of remarks regarding this Lypunov approach are in order.
\begin{itemize}
    \item The strength of using the norm $\norm{S^{-1}\cdot}$ instead of the Euclidean norm, is that it captures exactly the correct notion of convergence of $x$ by respecting both the influence of the forward mapping $A$ and the initial ensemble $C_0$ {(which is either implicitly assumed to be an approximation of the prior, or explicitly chosen to have similar statistical properties). In particular, the ``pure'' Euclidean norm does not yield the right notion of tracking convergence, as \cref{ex:nonmon,fig:nonmonotonicity} show.}
    \item The proof that the dynamics behave monotonously in this norm did not use the explicit solution for $m(t)$ derived in \cref{thm:deterministic_EKI}.
    Therefore, we hope that this Lyapunov approach might work similarly in the stochastic setting where an explicit solution is not readily available.
    \item Since $S^{-1}$ is a regular matrix, the functional $L(m)$ is coercive which will turn out useful for showing existence of a limit of $m(t)$ as $t\to\infty$ in more general settings.
    \item ``Preconditioning'' with $S^{-1}$ also allows one to show that $C(t)A^T\Gamma^{-1}A$ has the same eigenvectors for all times, without using the explicit solution for $C(t)$.
    From the identity $C_0 A^T\Gamma^{-1}A = SDS^{-1}$ we see that $\dot{C}(t)$ is diagonalized in the same way:
    \begin{align*}
        S^{-1} \dot{C}(t)A^T\Gamma^{-1}A S = -S^{-1} C(t)A^T\Gamma^{-1}A C(t)A^T\Gamma^{-1}AS,
    \end{align*}
    i.e., if we set $D(t):=S^{-1}C(t)A^T\Gamma^{-1}AS$, we obtain the very simple ordinary differential equation $\dot{D}(t) = -D^2(t)$.
    This proves that $D(t)$ stays diagonal for all $t\geq 0$ and we obtain the diagonalization $C(t)A^T\Gamma^{-1}A = S D(t) S^{-1}$, which we have already derived in \cref{thm:cov_dynamics} with other techniques.
\end{itemize}
\end{remark}

\section{Spectral decomposition of the covariance}
\label{sec:spectral}

Recall that we are analysing the behavior of the EKI, with its covariance dynamics given by \labelcref{eq:ODE_cov}, which is
\begin{align*}
\dot{\cov}(t) = -\alpha\cov(t) A^T \Gamma^{-1} A \cov(t), \quad \cov(0) = \cov_0.    
\end{align*}
In the previous sections we have intensively used the diagonalization $\cov(t)\cov_0^{-1} = S E(t) S^{-1}$ from \cref{thm:cov_dynamics} to understand the {deterministic} dynamics of {different versions of} EKI. The eigenvectors of the matrix $\cov(t)\cov_0^{-1}$ do not change in time and their associated eigenvalues have an explicit expression, {as has been shown in \cref{sec:diagonalization}, and also in \cite{garbuno2020interacting}}. 
None of this is true for the covariance matrix $\cov(t)$ itself and, in particular, its eigenvectors can change drastically in time. 

In this section, we derive a coupled system of ordinary differential equations which is solved by the eigenvalue and eigenvectors of $\cov(t)$.
{For this we \emph{do not} use the explicit solution for $\cov(t)$. 
Instead we only utilize its governing dynamics \labelcref{eq:ODE_cov}, re-displayed above.}
To this end, denote by $\lambda_1(t),\dots,\lambda_n(t)$ the eigenvalues of $\cov(t)$, with eigenvectors $v_1(t),\ldots,v_n(t)$. 
Since $\cov(t)$ is symmetric, all eigenvectors can be chosen orthonormal and we can assume the ordering $\lambda_1(t) \geq \dots \geq \lambda_n(t)$ of the eigenvalues.

\begin{theorem}[eigenvector dynamics]\label{thm:eigenvectors}
Let $\cov(t)$ denote the solution of \labelcref{eq:ODE_cov} with initial condition $\cov_0$.
Denote the eigenvalues and normalized eigenvectors of $\cov(t)$ by $\lambda_i(t)$ and $v_i(t)$ for $i=1,\dots,n$.
Then it holds:
\begin{itemize}
    \item Any set of eigenvalues $\lambda_i(t)$ with corresponding eigenvectors $v_i(t)$ for $i=1,\dots,n$, differentiable in time, solve the following differential algebraic system:
        \begin{subequations}\label{eq:DAE}
        \begin{align}
            \label{eq:ODE_eigvalues}
            \dot{\lambda}_i &= -{\alpha}\lambda_i^2 \norm{A v_i}_\Gamma^2,\\
            \label{eq:ODE_eigvectors}
            \dot{v}_i &= \sum_{\substack{j\in\{1,\dots,n\}:\\\lambda_j \neq \lambda_i}} \frac{{\alpha}\lambda_i\lambda_j}{\lambda_j-\lambda_i}  \langle A v_i,  A v_j\rangle_\Gamma v_j,\\
            \label{eq:AE_orth}
            0 &= \lambda_i^2 \langle Av_i, Av_j\rangle_\Gamma,\quad\text{if }i\neq j\text{ but }\lambda_i=\lambda_j.
        \end{align}
        \end{subequations}
    \item  The eigenvalue adhere to the following bounds
        \begin{align}
        \lambda_i(t) &\geq \frac{\lambda_i(0)}{{\alpha}\norm{\Gamma^{-1/2}A}^2t\lambda_i(0) + 1}, \qquad\forall i=1,\dots,n,
        \\
        \lambda_{1}(t) &\geq \frac{\lambda_1(0)}{{\alpha}\norm{Av_1(0)}_\Gamma^2 t\lambda_1(0) + 1},
        \\
        \lambda_{n}(t) &\leq \frac{\lambda_n(0)}{{\alpha}\norm{Av_n(0)}_\Gamma^2t\lambda_n(0) + 1}.
        \end{align}
    \item The eigenvalues and eigenvectors have the following asymptotic behavior
        \begin{align}
            (\forall i = 1,\dots,n) \quad \lim_{t\to\infty}\lambda_i(t) = 0 \quad\text{or}\quad \lim_{t\to\infty} Av_i(t) = 0.
        \end{align}
\end{itemize}
\end{theorem}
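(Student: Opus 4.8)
The plan is to establish the three claims in order: first derive the differential-algebraic system, then use it to obtain the eigenvalue bounds, and finally invoke \cref{thm:cov_dynamics} for the asymptotic dichotomy. Throughout I assume, as the statement does, that a choice of orthonormal eigenvectors $v_i(t)$ with eigenvalues $\lambda_i(t)$ is differentiable in $t$; this sidesteps the delicate question of when such a differentiable eigendecomposition exists and lets me work purely with projections.

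For the DAE I would differentiate the defining relation $\cov(t)v_i(t)=\lambda_i(t)v_i(t)$ to get $\dot\cov\,v_i + \cov\,\dot v_i = \dot\lambda_i v_i + \lambda_i\dot v_i$, and then project onto the orthonormal basis $\{v_j\}$. Projecting onto $v_i$ and using symmetry of $\cov$ (so that $\langle\cov\dot v_i,v_i\rangle=\lambda_i\langle\dot v_i,v_i\rangle$) together with $\langle\dot v_i,v_i\rangle=0$ (from $\norm{v_i}=1$) leaves $\dot\lambda_i=\langle\dot\cov v_i,v_i\rangle$; substituting $\dot\cov\,v_i=-\alpha\lambda_i\cov A^T\Gamma^{-1}A v_i$ and shifting $\cov$ onto $v_i$ produces \labelcref{eq:ODE_eigvalues}. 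Projecting onto $v_j$ with $j\neq i$ and performing the same manipulations yields the scalar identity $(\lambda_i-\lambda_j)\langle\dot v_i,v_j\rangle = -\alpha\lambda_i\lambda_j\langle Av_i,Av_j\rangle_\Gamma$. For $\lambda_j\neq\lambda_i$ this fixes the coefficient $\langle\dot v_i,v_j\rangle$ and, upon expanding $\dot v_i=\sum_j\langle\dot v_i,v_j\rangle v_j$, gives \labelcref{eq:ODE_eigvectors}; for $\lambda_j=\lambda_i$ with $j\neq i$ the left-hand side vanishes identically and the identity collapses to the compatibility constraint \labelcref{eq:AE_orth}.

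From \labelcref{eq:ODE_eigvalues} every eigenvalue is non-increasing, and since $\cov_0$ is positive definite they stay strictly positive on finite intervals. The general bound follows from $\norm{Av_i}_\Gamma^2=\langle A^T\Gamma^{-1}Av_i,v_i\rangle\le\norm{\Gamma^{-1/2}A}^2$ (as $\norm{v_i}=1$), which turns \labelcref{eq:ODE_eigvalues} into the Riccati differential inequality $\dot\lambda_i\ge-\alpha\norm{\Gamma^{-1/2}A}^2\lambda_i^2$; comparing $1/\lambda_i$ with the solution of the corresponding equality gives the estimate. For the sharper bounds on $\lambda_1$ and $\lambda_n$ I would show that $t\mapsto\norm{Av_1(t)}_\Gamma^2$ is non-increasing and $t\mapsto\norm{Av_n(t)}_\Gamma^2$ is non-decreasing. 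Indeed, differentiating $\norm{Av_1}_\Gamma^2=\langle A^T\Gamma^{-1}Av_1,v_1\rangle$ and inserting \labelcref{eq:ODE_eigvectors} gives $\tfrac{\de}{\de t}\norm{Av_1}_\Gamma^2 = 2\sum_{j:\lambda_j\neq\lambda_1}\tfrac{\alpha\lambda_1\lambda_j}{\lambda_j-\lambda_1}\langle Av_1,Av_j\rangle_\Gamma^2$, and since $\lambda_1$ is the largest eigenvalue each summand has $\lambda_j-\lambda_1<0$ with non-negative numerator, so the derivative is $\le 0$; the smallest eigenvalue is analogous with the sign reversed. Feeding $\norm{Av_1(t)}_\Gamma^2\le\norm{Av_1(0)}_\Gamma^2$ (respectively the reverse inequality for $\lambda_n$) back into the Riccati comparison yields the two remaining bounds.

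Finally, \labelcref{eq:ODE_eigvalues} shows each $\lambda_i$ is monotone and bounded below by zero, hence $\lambda_i(t)\to\lambda_i^\infty\ge 0$. By \cref{thm:cov_dynamics} we have $\cov(t)\to\cov_\infty$ with $A\cov_\infty=0$, so $A\cov(t)\to 0$ in operator norm. Since $\lambda_i(t)Av_i(t)=A\cov(t)v_i(t)$ and $\norm{v_i(t)}=1$, this gives $\lambda_i(t)\norm{Av_i(t)}\le\norm{A\cov(t)}\to 0$. If $\lambda_i^\infty>0$ then $\lambda_i(t)$ is eventually bounded away from zero, forcing $Av_i(t)\to 0$; otherwise $\lambda_i^\infty=0$. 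Either way the dichotomy holds. I expect the main obstacle to be the treatment of degenerate eigenvalues in the eigenvector equation: when $\lambda_i=\lambda_j$ for $i\neq j$, the projection determines nothing about $\langle\dot v_i,v_j\rangle$ — this component encodes the gauge freedom of rotating the eigenbasis within a repeated eigenspace — so \labelcref{eq:ODE_eigvectors} must be understood as fixing only the components transverse to each eigenspace, with \labelcref{eq:AE_orth} recording the residual constraint; relatedly one must verify that positivity of the eigenvalues persists to justify the $1/\lambda_i$ comparison and that eigenvalue crossings do not destroy the assumed differentiability.
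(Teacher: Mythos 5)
Your proposal is correct and takes essentially the same route as the paper's proof: differentiating $\cov(t)v_i(t)=\lambda_i(t)v_i(t)$ together with $\norm{v_i(t)}^2=1$, projecting onto the orthonormal eigenbasis to obtain the identity $(\lambda_j-\lambda_i)\langle\dot v_i,v_j\rangle=\dot\lambda_i\delta_{ij}+\alpha\lambda_i\lambda_j\langle Av_i,Av_j\rangle_\Gamma$ and its three cases, a Riccati-type comparison combined with the monotonicity of $t\mapsto\norm{Av_1(t)}_\Gamma^2$ and $t\mapsto\norm{Av_n(t)}_\Gamma^2$ for the eigenvalue bounds, and $A\cov_\infty=0$ from \cref{thm:cov_dynamics} for the asymptotic dichotomy. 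Your handling of repeated eigenvalues via the gauge choice $\langle\dot v_i,v_j\rangle=0$ matches the paper's remark, and your monotone-convergence argument for the dichotomy is, if anything, slightly more explicit than the paper's.
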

\begin{remark}
The case of eigenvalues with $\lambda_i(t^\star) = \lambda_j(t^\star)$ but $\lambda_i(t)\neq \lambda_j(t)$ for $t\neq t^\star$ is a source of ambiguity for both the labelling of the eigenvalues and for the well-posedness of the system \labelcref{eq:DAE}. 
We disregard this case by considering the differential equations only for $t<t^\star$ and $t>t^\star$, and then completing by continuity. If $\lambda_i(t) = \lambda_j$ for all $t$ in a proper interval, then the dynamics of the eigenvectors has an intrinsic ambiguity which we remove by setting $\langle \dot v_i, v_j\rangle = 0$. 
\end{remark}
\begin{proof}
From the explicit solution \labelcref{eq:sol_cov} it follows that $\cov(t)$ is symmetric and positive definite for all times and thus diagonalizable with orthonormal eigenvectors. We start with the defining equations for the eigenvectors and eigenvalues while enforcing normality:
\begin{align*}
    \left(\cov(t)-\lambda_i(t)E\right)v_i(t) &= 0,\\
    \|v_i(t)\|^2 &= 1. 
\end{align*}
By taking the time derivative in both equations (and dropping the explicit dependence on $t$ for brevity), we obtain
\begin{align}
    (\cov-\lambda_i E)\dot{v}_i &= \dot{\lambda}_i v_i - \dot{\cov}v_i,\\
    \langle\dot{v}_i,v_i\rangle &=0. \label{eq:increment_orth}
\end{align}
By using \labelcref{eq:ODE_cov} and $\cov v_i = \lambda_i v_i$, this means that 
\begin{align*}
    \cov \dot{v}_i - \lambda_i\dot{v}_i 
    = \dot{\lambda}_i v_i + {\alpha}\cov A^T\Gamma^{-1}A\cov v_i
    = \dot{\lambda}_i v_i + {\alpha}\lambda_i \cov A^T\Gamma^{-1}A v_i.
\end{align*}
Now we take the scalar product of both sides with $v_j$ and obtain
\begin{align*}
    \langle\cov \dot{v}_i,v_j\rangle - \lambda_i \langle\dot{v}_i,v_j\rangle 
    = \dot{\lambda}_i \langle v_i,v_j\rangle + {\alpha}\lambda_i \langle\cov A^T\Gamma^{-1}A v_i,v_j\rangle,
\end{align*}
which is equivalent to
\begin{align}\label{eq:evecs_evals}
    (\lambda_j - \lambda_i)\langle\dot{v}_i,v_j\rangle 
    =  \dot{\lambda}_i \delta_{i,j} +  {\alpha}\lambda_i \lambda_j \langle A v_i,A v_j\rangle_\Gamma,
\end{align}
where $\delta_{i,j}$ is $1$ if $i=j$ and $0$ otherwise. 

We have three cases to consider: 
Firstly, in the case $i=j$, we get
\begin{align*}
    \dot{\lambda}_i = -{\alpha}\lambda_i^2 \|\Gamma^{-1/2}Av_i\|^2 = -2\lambda_i^2 \norm{Av_i}_\Gamma^2,
\end{align*}
which proves~\labelcref{eq:ODE_eigvalues}.

Secondly, if $i\neq j$ and $v_i, v_j$ are two different eigenvectors for the same eigenvalue $\lambda_i = \lambda_j$, then \labelcref{eq:evecs_evals} implies 
\begin{align*}
    \lambda_i^2 \langle A v_i, A v_j\rangle_\Gamma = 0,
\end{align*}
which proves \labelcref{eq:AE_orth}.

Thirdly, in the case $i\neq j$ and $\lambda_i\neq \lambda_j$ we conclude from \labelcref{eq:evecs_evals} that
\begin{align*}
   \langle \dot{v}_i,v_j\rangle = \frac{{\alpha}\lambda_i\lambda_j}{\lambda_j-\lambda_i}  \langle Av_i,Av_j\rangle_\Gamma.
\end{align*}

Using this equation together with \labelcref{eq:increment_orth} and orthonormality of the $v_i$, we can express $\dot{v}_i$ in the basis $\{v_j\}_j$ as
\begin{align*}
    \dot{v}_i = \sum_{\lambda_j \neq \lambda_i} \frac{{\alpha}\lambda_i\lambda_j}{\lambda_j-\lambda_i}  \langle A v_i,A v_j\rangle_\Gamma v_j.
\end{align*}
where we set $\langle\dot v_i, v_j\rangle = 0$ for $v_i,v_j$ in the same eigenspace for a joint eigenvalue $\lambda = \lambda_i=\lambda_j$.

The lower bound decay rate on the eigenvalues follows from the fact that the $v_i$ have unit norm and we can bound $\norm{A v_i}_\Gamma^2 \leq \norm{\Gamma^{-1/2}A}^2$. Furthermore, we see that 
\begin{align*}
    \Gamma^{-1/2}A \dot{v}_i = \sum_{\lambda_j\neq\lambda_i} \frac{{\alpha}\lambda_i\lambda_j}{\lambda_j-\lambda_i}  \langle A v_i,A v_j\rangle_\Gamma \Gamma^{-1/2}A v_j
\end{align*}
and thus 
\begin{align*}
    \frac{\d}{\d t}\frac{1}{2}\|A v_i\|_\Gamma^2 = \sum_{\lambda_j\neq \lambda_i} \frac{{\alpha}\lambda_i\lambda_j}{\lambda_j-\lambda_i}  \langle A v_i,A v_j\rangle_\Gamma^2,
\end{align*}
from which we can similarly derive bounds for the cases $i=1$, i.e., $\frac{\d}{\d t}\|A v_i\|_\Gamma^2 \leq 0$ and $i=n$, i.e., $\frac{\d}{\d t}\|A v_i\|_\Gamma^2 \geq 0$.

For the last pillar of \cref{thm:eigenvectors} we argue as follows:
From \cref{thm:cov_dynamics}, we know that
\begin{align*}
    A^T\Gamma^{-1}A\cov (t) = S\diag\left(\frac{\mu_i}{1+{\alpha} t\mu_i}\right)_{i=1}^n S^{-1} \to 0,\quad t\to\infty.
\end{align*}
If we now multiply this from the right with an eigenvector $v_i(t)$ of $\cov (t)$ corresponding to an eigenvalue $\lambda_i(t)$, we obtain
\begin{align*}
    \lambda_i(t) A^T\Gamma^{-1}A v_i(t) = A^T\Gamma^{-1}A\cov (t)v_i(t) \to 0,\quad t\to\infty.
\end{align*}
i.e., $\lambda_i(t)\to 0$ or $Av_i(t)\to 0$, as claimed.
\end{proof}

\begin{corollary}
The function $t\mapsto \lambda_1(t)$ is convex.
\end{corollary}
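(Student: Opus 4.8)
The plan is to establish the stronger pointwise statement $\ddot\lambda_1(t)\ge 0$ wherever $\lambda_1$ is twice differentiable, and then to upgrade this to genuine convexity across the instances of eigenvalue coalescence. First I would exploit the eigenvalue equation \labelcref{eq:ODE_eigvalues} from \cref{thm:eigenvectors}, which for the top eigenvalue reads $\dot\lambda_1=-\alpha\lambda_1^2\norm{Av_1}_\Gamma^2$. Since $\cov(t)$ is symmetric positive definite for all $t$ (by the explicit solution \labelcref{eq:sol_cov}), we have $\lambda_1(t)>0$, hence $\dot\lambda_1\le 0$. Differentiating once more gives
\begin{align*}
\ddot\lambda_1 = -\alpha\left(2\lambda_1\dot\lambda_1\norm{Av_1}_\Gamma^2 + \lambda_1^2\frac{\d}{\d t}\norm{Av_1}_\Gamma^2\right),
\end{align*}
and I would check that both summands inside the bracket are $\le 0$: the first because $\lambda_1>0$, $\dot\lambda_1\le 0$ and $\norm{Av_1}_\Gamma^2\ge 0$, and the second because $\lambda_1^2\ge 0$ together with the monotonicity $\frac{\d}{\d t}\norm{Av_1}_\Gamma^2\le 0$ already derived for the index $i=1$ in the proof of \cref{thm:eigenvectors}. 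The outer factor $-\alpha$ then flips the sign, yielding $\ddot\lambda_1\ge 0$.

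The main obstacle is that this computation presupposes differentiability of $\lambda_1$ and $v_1$, which fails precisely at times where the largest eigenvalue collides with the second largest: there $v_1$ is ambiguous (cf.\ the remark following \cref{thm:eigenvectors}), and $\lambda_1$, being a pointwise maximum of eigenvalue branches, may only be one-sidedly differentiable. Arguing convexity piece by piece on the smooth intervals is not enough on its own, since one must still rule out a downward kink at the junction times.

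To sidestep this entirely, I would instead prove convexity through the variational characterization $\lambda_1(t)=\max_{\norm{v}=1}\langle\cov(t)v,v\rangle$ and the fact that a pointwise supremum of convex functions is convex. Fixing a unit vector $v$ and setting $g_v(t):=\langle\cov(t)v,v\rangle$, the covariance dynamics \labelcref{eq:ODE_cov} give $\dot g_v=-\alpha\norm{A\cov(t)v}_\Gamma^2$, and a second differentiation, substituting $\dot\cov=-\alpha\cov A^T\Gamma^{-1}A\cov$ and writing $q(t):=A^T\Gamma^{-1}A\cov(t)v$, collapses to
\begin{align*}
\ddot g_v(t)=2\alpha^2\langle\cov(t)q(t),q(t)\rangle\ge 0,
\end{align*}
since $\cov(t)$ is positive semidefinite. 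Thus every $g_v$ is convex, and taking the maximum over the unit sphere yields convexity of $\lambda_1$ with no differentiability assumptions and no case distinction at coalescence times. I expect the only delicate point to be the short linear-algebra identity reducing $\ddot g_v$ to the displayed quadratic form, which uses the symmetry of $\cov$ and $\Gamma$ to move $A^T\Gamma^{-1}A\cov v$ across the inner product.
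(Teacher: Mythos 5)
Your proposal is correct. Its first half is precisely the paper's proof: the paper also differentiates \labelcref{eq:ODE_eigvalues}, substitutes the eigenvector dynamics \labelcref{eq:ODE_eigvectors}, and arrives at
\begin{align*}
\ddot\lambda_i = 2\alpha^2\lambda_i^3\left[\norm{Av_i}_\Gamma^4-\sum_{\lambda_j\neq\lambda_i}\frac{\lambda_j}{\lambda_j-\lambda_i}\langle Av_i,Av_j\rangle_\Gamma^2\right],
\end{align*}
observing that for $i=1$ every denominator $\lambda_j-\lambda_1$ is negative, so the bracket is nonnegative; your version, which signs the two summands separately via $\dot\lambda_1\le 0$ and $\frac{\d}{\d t}\norm{Av_1}_\Gamma^2\le 0$, is the same computation in uncombined form. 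Your second argument, however, is a genuinely different route that the paper does not take: writing $\lambda_1(t)=\max_{\norm{v}=1} g_v(t)$ with $g_v(t)=\langle\cov(t)v,v\rangle$, computing $\dot g_v=-\alpha\norm{A\cov(t)v}_\Gamma^2$ and, with $q(t)=A^T\Gamma^{-1}A\cov(t)v$, obtaining $\ddot g_v=2\alpha^2\langle\cov(t)q(t),q(t)\rangle\ge 0$ (both product-rule terms reduce to $-\alpha\langle\cov q,q\rangle$ by symmetry of $\cov$ and $A^T\Gamma^{-1}A$, and $\cov(t)$ is symmetric positive definite since $\cov(t)^{-1}=\cov_0^{-1}+\alpha t A^T\Gamma^{-1}A$), then concluding because a pointwise maximum of convex functions is convex. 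This checks out, and it buys genuine rigor: the paper's differentiation argument, like your first one, presupposes that $\lambda_1$ and $v_1$ are (twice, respectively once) differentiable, which degenerates exactly at eigenvalue collisions and is only set aside in the remark preceding the proof of \cref{thm:eigenvectors}, whereas your variational argument needs nothing beyond smoothness of $t\mapsto\cov(t)$, guaranteed by the explicit solution \labelcref{eq:sol_cov}, and requires no case distinction or continuity patching at coalescence times; it also extends immediately to convexity of the partial sums $\sum_{i\le k}\lambda_i(t)$ via the analogous trace-maximum characterization. You correctly diagnosed the weakness of the ODE-based route and repaired it.
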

\begin{proof}
We can take another derivative in \labelcref{eq:ODE_eigvalues} and obtain
\begin{align*}
    \ddot{\lambda}_i 
    &= -2{\alpha}\lambda_i\dot{\lambda}_i\norm{Av_i}_\Gamma^2 - 2{\alpha}\lambda_i^2\frac{\de}{\de t}\frac{1}{2}\norm{Av_i}_\Gamma^2 \\
    &= 2{\alpha}^2 \lambda_i^3\norm{Av_i}_\Gamma^4 - 2{\alpha}^2\lambda_i^3 \sum_{\lambda_j\neq\lambda_i}\frac{\lambda_j}{\lambda_j-\lambda_i}\langle Av_i,Av_j\rangle_\Gamma^2\\
    &=2{\alpha}^2\lambda_i^3\left[\norm{Av_i}_\Gamma^4 - \sum_{\lambda_j\neq\lambda_i}\frac{\lambda_j}{\lambda_j-\lambda_i}\langle Av_i,Av_j\rangle_\Gamma^2\right].
\end{align*}
Hence, for $i=1$ the second term is non-positive and one obtains $\ddot{\lambda}_i \geq 0$ which implies convexity.
\end{proof}

\section{Conclusions and outlook}

In this article we have provided a complete description of the deterministic dynamics of the ensemble Kalman Inversion, based on the spectral decomposition of a preconditioned {sample} covariance operator. 
We focused on {naive} EKI and mean-field EKI.
In particular, we have derived the time-asymptotic behavior of the covariance and also studied asymptotic profiles, their second-order asymptotics.
Then, we computed the explicit dynamics and convergence rates of particles and their {sample} mean for noisy data, and showed consistency for vanishing noise.
We applied these findings to the study of ensemble and residual spreads of {naive} EKI both in observation and parameter space, in particular, we constructed a counter example which shows that the residual spread in the Euclidean norm is not decreasing, in general.
This inspired us to define a ``problem-adapted'' weighted norm, depending on the forward model and the prior, with respect to which the ensemble spread decreases monotonously.
We concluded our studies with a spectral analysis of the ``pure'' covariance operator and derived the governing differential equations for its eigenvalues and eigenvectors.

Our analysis shows that if one is interested in the Bayesian context, the stochastic version or an approximation of mean-field EKI might {be} a better choice than {naive} EKI.
{In contrast, in the viewpoint of EKI as a derivative-free optimization method (e.g., in image reconstruction where the exact noise model is unknown), this is less relevant.
There, the noise covariance $\Gamma$ is typically guessed or treated as a regularization parameter and therefore the transport of prior to posterior in unit time looses its meaning.
In this context, EKI can be viewed as (stochastic) gradient descent of the functional $u\mapsto\frac{1}{2}\norm{Au-y}_\Gamma^2$ with respect to a covariance weighted norm, see also the discussion in the introduction.
}
All versions considered here share the same asymptotic behavior, albeit with different rates of convergence, and therefore are all basically equivalent in the zero-noise limit.
The presented results about the convergence rates might be used to accelerate EKI using time re-parametrization.
Furthermore, they might help to derive sharp quantitative estimates for the number of particles $J\in\N$ needed based on the noise level.
{Our methods show a way of analyzing the dynamics of certain particle systems related to the EKI. An interesting avenue for further research is the application of these ideas to generalized and improved variants of (Tikhonov-regularized) EKI, for example Ensemble Square Root filters \cite{tippett2003ensemble,chada2022convergence} and Ensemble Transform Kalman filters \cite{bishop2001adaptive} as applied to inverse problems.}
{
An important extension of our results is the analysis of the averaged stochastic equations \labelcref{eq:ODE_mean_av_emp,eq:ODE_cov_av_emp}.
This is non-trivial since it requires a careful control of the covariance-like corrections terms and their behavior in terms of the ensemble size~$J$.
}
Even more general, one can consider the much more challenging settings of purely stochastic EKI (i.e., where we need to analyse stochastic differential equations governing the evolution of the ensemble members), the EKI applied to a nonlinear forward problem (which means that the Bayesian posterior is not a Gaussian measure anymore), and of time-discretizations of the EKI used on problems in practice.

\section*{Acknowledgement}
The authors would like to thank the referees for their helpful remarks and guidance.
LB acknowledges funding by the Deutsche Forschungsgemeinschaft (DFG, German Research Foundation) under Germany's Excellence Strategy - GZ 2047/1, Projekt-ID 390685813. PW acknowledges support from MATH+ project EF1-19: Machine Learning Enhanced Filtering Methods for Inverse Problems, funded by the Deutsche Forschungsgemeinschaft (DFG, German Research Foundation) under Germany's Excellence Strategy – The Berlin Mathematics Research Center MATH+ (EXC-2046/1, project ID: 390685689).

\printbibliography

\clearpage
\begin{appendix}
\section{Auxiliary derivations}
\begin{lemma}[mean and covariance dynamics of {naive} EKI]\label{lem:derivation_ODEs}
Consider the particle dynamics
\begin{equation*} 
     \dot{u}^j(t) = -C(t) A^T\Gamma^{-1}(Au^j(t)-y)
\end{equation*}
with sample covariance $C(t) :=\frac{1}{J}\sum_{j=1}^J(u^j(t)-m(t)) \otimes (u^j(t)-m(t))$ and sample mean $m(t) := \frac{1}{J}\sum_{j=1}^J u^j(t)$.
These two quantities are governed by the following differential equations:
\begin{alignat}{2}
    \dot{m}(t) &= -C(t) A^T\Gamma^{-1}(Am(t)-y), \quad &&m(0) = m_0,\\
    \dot{C}(t) &= -2C(t) A^T \Gamma^{-1} A C(t), \quad &&C(0) = C_0.
\end{alignat}
\begin{proof}
The differential equation for the mean follows directly by summing the particle equation.
The covariance dynamics can be derived as follows:
\begin{align*}
    \dot C(t) &= \frac{1}{J}\sum_{j=1}^J(\dot u^j(t) - \dot m(t))\otimes (u^j(t) - m(t)) + (u^j(t) - m(t))\otimes (\dot u^j(t) - \dot m(t)) \\
    &= -\frac{1}{J}\sum_{j=1}^J C(t)A^T\Gamma^{-1}A (u^j(t)-m(t))\otimes (u^j(t)-m(t)) \\
    &- \frac{1}{J}\sum_{j=1}^J (u^j(t)-m(t))\otimes \left[C(t)A^T\Gamma^{-1}A (u^j(t)-m(t)) \right]\\
    &= -C(t)A^T\Gamma^{-1}A\frac{1}{J}\sum_{j=1}^J  (u^j(t)-m(t))\otimes (u^j(t)-m(t))\\
    & - \frac{1}{J}\sum_{j=1}^J (u^j(t)-m(t))\otimes (u^j(t)-m(t)) \cdot A^T\Gamma^{-1}A C(t)\\
    &=-2C(t)A^T\Gamma^{-1}AC(t)
\end{align*}
\end{proof}
\end{lemma}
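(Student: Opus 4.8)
The plan is to obtain both differential equations by differentiating the definitions of $m(t)$ and $C(t)$ and substituting the particle dynamics, exploiting that the preconditioner $C(t)A^T\Gamma^{-1}A$ is common to all particles and hence independent of the index $j$.

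First I would treat the mean. Summing the particle equation over $j=1,\dots,J$ and dividing by $J$ gives
\[
\dot m(t) = \frac1J\sum_{j=1}^J \dot u^j(t) = -C(t)A^T\Gamma^{-1}\Big(A\,\tfrac1J\textstyle\sum_{j} u^j(t) - y\Big),
\]
where I used that $\frac1J\sum_j y = y$ since $y$ does not depend on $j$. Recognizing $\frac1J\sum_j u^j(t) = m(t)$ yields the mean equation directly, and $m(0)=m_0$ is immediate from the definition.

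Next I would introduce the deviations $e^j(t) := u^j(t) - m(t)$, so that $C(t) = \frac1J\sum_j e^j(t)\otimes e^j(t)$. Subtracting the mean equation from the particle equation cancels the inhomogeneous term $C A^T\Gamma^{-1}y$, so the deviations satisfy the linear equation $\dot e^j = -C(t)A^T\Gamma^{-1}A\,e^j$, which is exactly \labelcref{eq:ODE_residuals} specialized to the deviations. Applying the product rule,
\[
\dot C(t) = \frac1J\sum_{j=1}^J \big(\dot e^j\otimes e^j + e^j\otimes\dot e^j\big),
\]
I would substitute $\dot e^j$ and pull the $j$-independent matrix $M := C(t)A^T\Gamma^{-1}A$ out of each outer product using the identities $(Mv)\otimes v = M(v\otimes v)$ and $v\otimes(Mv) = (v\otimes v)M^T$. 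Summing over $j$ then reconstitutes $C(t)$ in each term, giving $\dot C = -MC - CM^T$.

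The only point requiring care --- rather than a genuine obstacle --- is the transpose bookkeeping when extracting $M$ on the right of an outer product: because $C$ and $\Gamma$ are symmetric, $M^T = A^T\Gamma^{-1}A\,C$, so both terms equal $C A^T\Gamma^{-1}A\,C$ and combine to $\dot C = -2\,C A^T\Gamma^{-1}A\,C$, as claimed, with $C(0)=C_0$ immediate. Since the whole argument is a short computation, there is no substantive difficulty; the structural insight worth highlighting is merely that the deviations obey the same linear flow as the residuals, which is what makes the outer-product terms collapse so cleanly.
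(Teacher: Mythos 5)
Your proposal is correct and follows essentially the same route as the paper: sum the particle equation for the mean, observe that the deviations $e^j(t)=u^j(t)-m(t)$ satisfy the linear equation $\dot e^j=-C(t)A^T\Gamma^{-1}A\,e^j$, apply the product rule to $C(t)=\frac1J\sum_j e^j\otimes e^j$, and pull the $j$-independent matrix $C(t)A^T\Gamma^{-1}A$ out of the outer products, using symmetry of $C$ and $\Gamma$ to combine the two resulting terms into $-2C(t)A^T\Gamma^{-1}AC(t)$. The paper performs the identical computation without naming the deviations explicitly, so there is no substantive difference.
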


\begin{lemma}[mean and covariance dynamics of EKI]\label{lem:derivation_SDEs}
{%
We consider particles governed by 
\begin{align*}
     \dot{u}^j(t) = -C(t) A^T\Gamma^{-1}(Au^j(t)-y) + C(t)A^T\Gamma^{-1}\sqrt{\Sigma}\dot{\wiener}^j(t)
\end{align*}
with $\Sigma=\Gamma^{-1}$, sample covariance $C(t):=\frac{1}{J}\sum_{j=1}^J(u^j(t)-m(t)) \otimes (u^j(t)-m(t))$, and sample mean $m(t) := \frac{1}{J}\sum_{j=1}^J u^j(t)$.
These two quantities are governed by the following stochastic differential equations:
\begin{align}
    \dot m(t) &= -C(t)A^T\Gamma^{-1}(Am(t)-y) + C(t) A^T\Gamma^{-1/2}\dot{\overline\wiener}(t)\\
    \dot C(t) &= -\frac{J+1}{J}C(t) A^T\Gamma^{-1}A C(t) \\\notag
    &+ \frac1 {J}\sum_{j=1}^J e^j(t)\otimes (\dot\wiener^j(t)-\dot{\overline\wiener}(t)) \Gamma^{-1/2}A C(t) \\\notag
    &+  \frac1 {J}\sum_{j=1}^J C(t)A^T\Gamma^{-1/2}(\dot\wiener^j(t)-\dot{\overline\wiener}(t))\otimes e^j(t)
\end{align}
with $\overline\wiener(t) = \frac 1 J\sum_{j=1}^J\wiener^j(t)$.
In addition, the \emph{average {sample} mean} $\mathbf{m}(t):= \E^\wiener m(t)$ and \emph{average {sample} covariance} $\mathbf{C}(t) := \E^\wiener C(t)$ satisfy the following differential equations:
\begin{align}
    \dot{\mathbf C}(t) &= -\frac{J+1}{J}\mathbf C(t) A^T \Gamma^{-1} A \mathbf C(t)\notag
    \\
    &\qquad 
    -
    \E^\wiener\left[(C(t)-\mathbf C(t))A^T\Gamma^{-1}A(C(t)-\mathbf C(t))\right],\\
    \dot{\mathbf m}(t) &= -\mathbf C(t) A^T\Gamma^{-1}(A\mathbf m(t)-y) 
    \notag
    \\
    &\qquad 
    -\E^\wiener\left[(C(t)-\mathbf C(t))A^T\Gamma^{-1}A(m(t)-\mathbf m(t))\right]
\end{align}
\begin{proof}
The mean SDE is directly obtained by summing the particle dynamics. 
The SDE governing the covariance is computed as follows: By setting $e^j = u^j-m$ and using $C = \frac{1}{J}\sum_{j=1}^Je^j\otimes e^j$, It\=o's formula yields
\begin{align*}
    \de(e^j\otimes e^j) &= \de e^j\otimes e^j + e^j\otimes \de e^j + \frac{1}{2}\cdot 2\cdot \de e^j\otimes \de e^j\\
    &=-CA^T\Gamma^{-1}A(e^j\otimes e^j) \d t - (e^j\otimes e^j)A^T\Gamma^{-1}AC \d t\\
    &+CA^T\Gamma^{-1}(\de\wiener^j-\de\overline \wiener)\otimes e^j + e^j\otimes (\de\wiener^j-\de\overline \wiener)\Gamma^{-1/2}AC \\
    &+ CA^T\Gamma^{-1/2}(\de\wiener^j-\de\overline \wiener)\otimes (\de\wiener^j-\de\overline \wiener)\Gamma^{-1/2}AC.
\end{align*}
Now we use $(\de\wiener^j-\de\overline\wiener)\otimes (\de\wiener^j-\de\overline\wiener) = \frac{J-1}{J}E$, where $E$ is the identity matrix, which is a simple application of It\=o calculus, or a consequence of Lemma A.1 in \cite{blomker2019well}. Then we can sum over $j$ in order to obtain
\begin{align*}
    \de C(t) &= \frac{1}{J}\sum_{j=1}^J\de(e^j\otimes e^j)\\
    &=\left(-2 + \frac{J-1}{J}\right)CA^T\Gamma^{-1}AC\d t \\
    &+ \frac{1}{J}\sum_{j=1}^J CA^T\Gamma^{-1}(\de\wiener^j-\de\overline\wiener)\otimes e^j\\
    &+ \frac{1}{J}\sum_{j=1}^J e^j\otimes (\de\wiener^j-\de\overline\wiener) \Gamma^{-1}AC.
\end{align*}
We write this in integral form to obtain
\begin{align*}
    C(t) -C(0) &=\left(-2 + \frac{J-1}{J}\right)\int_0^t C(s)A^T\Gamma^{-1}AC(s)\d s \\
    &+ \frac{1}{J}\sum_{j=1}^J\int_0^t C(s)A^T\Gamma^{-1}(\de\wiener^j(s)-\de\overline\wiener(s))\otimes e^j\\
    &+ \frac{1}{J}\sum_{j=1}^Je^j\otimes\int_0^t (\de\wiener^j(s)-\de\overline\wiener(s)) \Gamma^{-1}AC(s).
\end{align*}
By proving that the stochastic integrals are indeed martingales (as in \cite{blomker2019well}), we can drop them after taking the expectation, i.e.
\begin{align*}
    \frac{\d}{\d t}\E C(t) &=\left(-2 + \frac{J-1}{J}\right) \E[C(t)A^T\Gamma^{-1}AC(t)] \\
    &= -\frac{J+1}{J}\E[C(t)]A^T\Gamma^{-1}A\E[C(t)] \\
    & -\frac{J+1}{J}\E[(C-\E[C])A^T\Gamma^{-1}A(C-\E[C])].
\end{align*}
The ODE for $\mathbf{m}(t)=\E^\wiener m(t)$ is derived analogously.
\end{proof}
}
\end{lemma}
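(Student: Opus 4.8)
The plan is to obtain the pathwise Itô SDEs for $m(t)$ and $C(t)$ first, and then pass to the Wiener-averaged ODEs by applying $\E^\wiener$ and resolving the nonlinear terms through a fluctuation decomposition. For the mean I would average the particle SDE over $j$: since $C(t)A^T\Gamma^{-1}$ and the noise prefactor $C(t)A^T\Gamma^{-1}\sqrt{\Sigma}=C(t)A^T\Gamma^{-1/2}$ (consistently taking $\Sigma=\Gamma$) are common factors, the drift collapses via $\frac1J\sum_j(Au^j-y)=Am-y$ and the noise term becomes $C(t)A^T\Gamma^{-1/2}\de\overline\wiener$ with $\overline\wiener=\frac1J\sum_j\wiener^j$, giving the first SDE immediately. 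For the covariance I would introduce the deviations $e^j=u^j-m$, whose dynamics follow by subtracting the mean SDE from the particle SDE: the drift is $-CA^T\Gamma^{-1}Ae^j$ and the diffusion coefficient is $CA^T\Gamma^{-1/2}(\de\wiener^j-\de\overline\wiener)$. Applying Itô's product rule to the matrix $e^j\otimes e^j$ then produces the two drift-times-$e^j$ terms together with the quadratic covariation $\de e^j\otimes\de e^j$.

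The crucial computation is the covariation of the centered increments. Writing $\de\wiener^j-\de\overline\wiener=(1-\tfrac1J)\de\wiener^j-\tfrac1J\sum_{k\neq j}\de\wiener^k$ and using the independence relation $\de\wiener^j\otimes\de\wiener^k=\delta_{jk}E\,\de t$, a short expansion gives
\begin{align*}
(\de\wiener^j-\de\overline\wiener)\otimes(\de\wiener^j-\de\overline\wiener)=\left[\Big(1-\tfrac1J\Big)^2+\tfrac{J-1}{J^2}\right]E\,\de t=\tfrac{J-1}{J}E\,\de t,
\end{align*}
which is exactly Lemma~A.1 of \cite{blomker2019well}. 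Hence $\de e^j\otimes\de e^j=\tfrac{J-1}{J}CA^T\Gamma^{-1}AC\,\de t$. Summing over $j$ and using $\frac1J\sum_je^j\otimes e^j=C$ collapses the two drift terms into $-2CA^T\Gamma^{-1}AC\,\de t$ and the covariation into $+\tfrac{J-1}{J}CA^T\Gamma^{-1}AC\,\de t$, so the total drift is $-\tfrac{J+1}{J}CA^T\Gamma^{-1}AC\,\de t$, leaving the two surviving stochastic integrals; this is the claimed covariance SDE.

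For the averaged equations I would write both SDEs in integral form and take $\E^\wiener$. The stochastic integrals are genuine martingales, so they vanish in expectation. The nonlinear drifts are resolved by the decomposition $C=\mathbf C+(C-\mathbf C)$ and $m=\mathbf m+(m-\mathbf m)$: since $\mathbf C,\mathbf m$ are deterministic and $\E^\wiener[C-\mathbf C]=0=\E^\wiener[m-\mathbf m]$, all cross terms drop, leaving
\begin{align*}
\E^\wiener[CA^T\Gamma^{-1}Am]=\mathbf C A^T\Gamma^{-1}A\mathbf m+\E^\wiener\big[(C-\mathbf C)A^T\Gamma^{-1}A(m-\mathbf m)\big],
\end{align*}
and analogously $\E^\wiener[CA^T\Gamma^{-1}AC]=\mathbf C A^T\Gamma^{-1}A\mathbf C+\E^\wiener[(C-\mathbf C)A^T\Gamma^{-1}A(C-\mathbf C)]$. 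Substituting these identities into the expectations of the two SDEs yields the stated ODEs for $\mathbf m$ and $\mathbf C$, each consisting of a ``mean-field'' product plus a covariance-of-fluctuations correction.

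The main obstacle is the rigorous justification that the stochastic integrals are \emph{true} martingales rather than merely local ones, which is precisely what licenses dropping them after taking expectations. This requires an a priori moment bound ensuring $\E^\wiener\int_0^t\norm{C(s)A^T\Gamma^{-1/2}}^2\,\de s<\infty$; unlike the deterministic case, $C(t)$ is now random and such a bound is not automatic, so I would import the integrability estimates of \cite{blomker2019well}. A secondary, purely bookkeeping subtlety is tracking the correct power of $\Gamma$ in the noise coefficient (here $\Gamma^{-1}\sqrt{\Sigma}=\Gamma^{-1/2}$) consistently through the covariation step.
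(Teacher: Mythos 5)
Your proposal is correct and follows essentially the same route as the paper: averaging the particle SDE for the mean, applying It\^o's product rule to $e^j\otimes e^j$ with the key covariation identity $(\de\wiener^j-\de\overline\wiener)\otimes(\de\wiener^j-\de\overline\wiener)=\frac{J-1}{J}E\,\de t$ (which you prove directly where the paper cites Lemma~A.1 of \cite{blomker2019well}), summing to get the drift factor $-2+\frac{J-1}{J}=-\frac{J+1}{J}$, dropping the stochastic integrals in expectation via the same martingale/integrability argument, and resolving $\E^\wiener[CA^T\Gamma^{-1}AC]$ and $\E^\wiener[CA^T\Gamma^{-1}Am]$ by the identical fluctuation decomposition --- and you correctly repair the $\Sigma=\Gamma^{-1}$ typo to $\Sigma=\Gamma$. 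One pedantic note: your substitution, carried out exactly, places the prefactor $\frac{J+1}{J}$ on the covariance fluctuation term (as in the paper's own proof display), whereas the lemma's statement omits it, so "yields the stated ODEs" holds only up to this discrepancy that is already internal to the paper.
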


{%

Finally, we provide a formal derivation of the mean-field dynamics.

\begin{lemma}[mean and covariance dynamics for mean-field EKI]\label{lem:derivation_meanfield}
Let $\rho(t,u)$ be a solution of
\begin{align*}
    \partial_t \rho = \div\left(\rho~\mathfrak C(t)A^T\Gamma^{-1}(A u - y)\right) + \frac12 \operatorname{Tr}(D^2\rho~\mathfrak C(t)A^T\Gamma^{-1}A\mathfrak C(t))
\end{align*}
and let $\mathfrak m(t)$ and $\mathfrak C(t)$ be defined by \labelcref{eq:meanfield_mean,eq:meanfield_cov}.
These two quantities are governed by the following differential equations:
\begin{align}
    \dot{\mathfrak{m}}(t) &=  -\mathfrak{C}(t)A^T\Gamma^{-1}(A\mathfrak{m}(t)-y),\\
    \dot{\mathfrak{C}}(t) &= -\mathfrak{C} (t) A^T\Gamma^{-1}A \mathfrak{C}(t).
\end{align}
\end{lemma}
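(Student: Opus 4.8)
The plan is to derive both ODEs by testing the Fokker–Planck equation against the monomials $u$ and $u\otimes u$ and integrating by parts, treating $\mathfrak C(t)$ and $\mathfrak m(t)$ as time-dependent but spatially constant coefficients throughout. I would write the equation as $\partial_t\rho = \div(\rho\, b) + \tfrac12\operatorname{Tr}(D^2\rho\, B)$ with drift $b(u) = \mathfrak C A^T\Gamma^{-1}(Au-y)$ and symmetric diffusion matrix $B := \mathfrak C A^T\Gamma^{-1}A\mathfrak C$. Abbreviating $G := \mathfrak C A^T\Gamma^{-1}A$ and $h := \mathfrak C A^T\Gamma^{-1}y$, so that $b(u) = Gu - h$, I record the two identities $G\mathfrak C = \mathfrak C G^T = B$, which follow from symmetry of $\mathfrak C$ and $\Gamma$ and will be the crucial algebraic input at the end. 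Since this is a formal derivation, I would assume throughout that $\rho$ and its first derivatives decay fast enough that all boundary terms in the integrations by parts vanish.

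First I would handle the mean. Differentiating under the integral, $\dot{\mathfrak m} = \int u\,\partial_t\rho\,\d u$. One integration by parts on the drift term gives $\int u\,\div(\rho b)\,\d u = -\int b\,\rho\,\d u = -\mathfrak C A^T\Gamma^{-1}(A\mathfrak m - y)$, using $\int\rho\,\d u = 1$ and $\int u\,\rho\,\d u = \mathfrak m$. The diffusion term vanishes, since integrating by parts twice moves both derivatives onto the linear factor $u$, whose second derivatives are zero. This yields the mean ODE directly.

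Next I would treat the covariance via the second moment $M(t) := \int u\otimes u\,\rho\,\d u = \mathfrak C + \mathfrak m\otimes\mathfrak m$. Testing against $u\otimes u$, the drift term (one integration by parts) produces $-(GM + MG^T) + h\otimes\mathfrak m + \mathfrak m\otimes h$, while the diffusion term (two integrations by parts) produces exactly $B$, because the constant Hessian of $u_iu_j$ contracts against $B$ to give $\tfrac12(B_{ij}+B_{ji}) = B_{ij}$. Hence $\dot M = -(GM+MG^T) + h\otimes\mathfrak m + \mathfrak m\otimes h + B$. I would then compute $\dot{\mathfrak C} = \dot M - \dot{\mathfrak m}\otimes\mathfrak m - \mathfrak m\otimes\dot{\mathfrak m}$, substitute the mean ODE in the form $\dot{\mathfrak m} = -G\mathfrak m + h$, and observe that the $h\otimes\mathfrak m$ and $\mathfrak m\otimes h$ terms cancel, while the $G(\mathfrak m\otimes\mathfrak m)$ and $(\mathfrak m\otimes\mathfrak m)G^T$ contributions combine with $-GM - MG^T$ and $M = \mathfrak C + \mathfrak m\otimes\mathfrak m$ to leave $-G\mathfrak C - \mathfrak C G^T + B$. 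Finally, using $G\mathfrak C = \mathfrak C G^T = B$ collapses this to $-B - B + B = -B = -\mathfrak C A^T\Gamma^{-1}A\mathfrak C$, as claimed.

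The routine part is the index bookkeeping; the one genuine point to get right is the cancellation in the covariance step, where the diffusion contributes $+B$ and the two drift contributions each effectively contribute $-B$, so that the net is $-B$ rather than $-2B$. The main obstacle is not analytic difficulty but justifying the formal manipulations: one must guarantee enough decay and integrability of $\rho$ for differentiation under the integral sign and the (double) integrations by parts to be legitimate without boundary contributions. As the lemma is explicitly stated as a formal derivation, I would simply flag these regularity assumptions rather than establish them rigorously.
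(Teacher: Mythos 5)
Your proposal is correct and takes essentially the same route as the paper: both derive the moment equations by formally testing the Fokker--Planck equation against first and second moments and integrating by parts (the mean computation is identical, including the observation that the diffusion term vanishes against the linear test function), and both hinge on the same $-2\mathfrak C A^T\Gamma^{-1}A\mathfrak C$ from the drift versus $+\mathfrak C A^T\Gamma^{-1}A\mathfrak C$ from the diffusion, netting $-\mathfrak C A^T\Gamma^{-1}A\mathfrak C$. The only difference is bookkeeping: you pass through the uncentered second moment $M=\mathfrak C+\mathfrak m\otimes\mathfrak m$ and cancel the $h\otimes\mathfrak m$, $\mathfrak m\otimes h$ terms explicitly, whereas the paper differentiates the centered covariance $\int (u-\mathfrak m(t))\otimes(u-\mathfrak m(t))\,\rho(t,u)\,\d u$ directly by the product rule, substitutes the mean ODE, and combines $(Au-y)$ with $(A\mathfrak m-y)$ into $A(u-\mathfrak m)$ --- an equivalent, equally formal variant.
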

\begin{proof}
The following computation is entirely formal, assuming that $\rho(t,u)$ is absolutely continuous with respect to the Lebesgue measure, and sufficiently smooth.

Using the definition of $\mathfrak m(t)$ and expanding the trace operator, one computes
\begin{align*}
    \dot{\mathfrak m}(t)
    &=\int u~\partial_t\rho(t,u)\d u\\
    &=\int u \div(\rho(t,u)\mathfrak C(t)A^T\Gamma^{-1}(Au-y))\d u \\
    &\qquad + \frac{1}{2}\sum_{i,j}\left(\mathfrak{C}(t)A^T\Gamma^{-1}A\mathfrak C(t)\right)_{ij}\int u \partial_i\partial_j \rho(t,u) \d u.
\end{align*}
Integrating the second term by parts, we see that it vanishes.
Integrating the first term by parts yields
\begin{align*}
    \dot{\mathfrak m}(t)
    &= - \mathfrak{C}(t)A^T\Gamma^{-1}A\int u~\rho(t,u)\d u + \mathfrak{C}(t)A^T\Gamma^{-1}y\int\rho(t,u)\d u \\
    &= - \mathfrak{C}(t)A^T\Gamma^{-1}(A\mathfrak{m}(t)-y),
\end{align*}
where we used $\int\rho(t,u)\d u=1$.

Using the dynamics of the mean we can derive the dynamics of the covariance as follows.
We compute using the product rule
\begin{align*}
    \dot{\mathfrak C}(t) &= 2\int (u-\mathfrak m(t))\otimes(\mathfrak C(t)A^T\Gamma^{-1}(A\mathfrak m(t)-y))\rho(t,u)\d u \\
    &\qquad+ \int (u-\mathfrak m(t))\otimes(u-\mathfrak m(t))\div(\rho(t,u)\mathfrak C(t)A^T\Gamma^{-1}(Au-y))\d u \\
    &\qquad+\mathfrak \sum_{i,j}\left(C(t)A^T\Gamma^{-1}A\mathfrak C(t)\right)_{ij}\int \frac{1}{2} (u-\mathfrak m(t))\otimes(u-\mathfrak m(t))\partial_i\partial_j\rho(t,u)\d u.
\end{align*}
Integrating the second term $(II)$ by parts shows
\begin{align*}
    (II)=-2\int (u-\mathfrak m(t))\otimes\left(\mathfrak C(t)A^T\Gamma^{-1}(Au-y)\right)\rho(t,u)\d u.
\end{align*}
Similarly, the third term $(III)$ integrates to
\begin{align*}
    (III)=\mathfrak C(t)A^T\Gamma^{-1}A\mathfrak C(t)\int \rho(t,u)\d u = \mathfrak C(t)A^T\Gamma^{-1}A\mathfrak C(t).
\end{align*}
Putting everything together one obtains
\begin{align*}
    \dot{\mathfrak C}(t)
    &=-2\int(u-\mathfrak m(t))\otimes(\mathfrak C(t)A^T\Gamma^{-1}A(u-\mathfrak m(t)))\rho(t,u)\d u + \mathfrak C(t)A^T\Gamma^{-1}A\mathfrak C(t)\\
    &=-2\int(u-\mathfrak m(t))\otimes(u-\mathfrak m(t))\rho(t,u)\d u \left(\mathfrak C(t)A^T\Gamma^{-1}A\right)^T + \mathfrak C(t)A^T\Gamma^{-1}A\mathfrak C(t)\\
    &=-\mathfrak C(t)A^T\Gamma^{-1}A\mathfrak C(t).
\end{align*}
\end{proof}
}
\end{appendix}
\end{document}